\title{Cyclotomic polynomials with prescribed height and prime number theory}
\author{Alexandre Kosyak, Pieter Moree, Efthymios Sofos and Bin Zhang}
\def\@ptsize{2}
\newtheorem{Thm}{Theorem}
\newtheorem{Con}{Conjecture}
\newtheorem{Lem}{Lemma}
\newtheorem{Quest}{Question}
\newtheorem{Def}{Definition}
\newtheorem{cor}{Corollary}
\newtheorem{rem}{Remark}
\newtheorem{ex}{Example}
\newtheorem{Prop}{Proposition}
\begin{document}
\date{}
\maketitle
{\def\thefootnote{}
\footnote{{\it Mathematics Subject Classification (2000)}.
11B83, 11C08}}
\begin{abstract}
\noindent
Given any positive integer $n,$ let $A(n)$ denote the height of 
the $n^{\text{th}}$ cyclotomic polynomial, that is its
maximum coefficient in absolute value.
It is well known that $A(n)$ is unbounded. We conjecture that every natural
number can arise as value of $A(n)$ 
and prove this assuming that
for every pair of consecutive primes $p$ and $p'$ with $p\ge 127$
we have $p'-p<\sqrt{p}+1.$
We also conjecture that
every natural number occurs as the maximum coefficient of some cyclotomic polynomial and show that this
is true if Andrica's
conjecture holds, i.e., that 
$\sqrt{p'}-\sqrt{p}<1$ always holds. This is the first time, as far as the authors 
know, that a connection between prime gaps
and cyclotomic polynomials is uncovered.
Using a result of 
Heath-Brown on prime gaps we show unconditionally that every natural number $m\le x$ occurs
as $A(n)$ value with at most $O_{\epsilon}(x^{3/5+\epsilon})$ exceptions.
On the Lindel\"of Hypothesis we show there are at most 
$O_{\epsilon}(x^{1/2+\epsilon})$ exceptions and study them further 
by  using deep work of Bombieri--Friedlander--Iwaniec 
  on the distribution of primes in arithmetic progressions
  beyond the square-root barrier.
\end{abstract}
\section{Introduction}
Let $n\ge 1$ be an integer. The $n^{th}$ cyclotomic polynomial 
\begin{equation*}
\Phi_n(x)=\sum_{j=0}^{\varphi(n)}a_n(j)x^j,
\end{equation*}
is a polynomial of degree $\varphi(n),$ with $\varphi$ Euler's totient function.
For $j>\varphi(n)$ we put $a_n(j)=0.$
The coefficients $a_n(j)$ are usually very small. In the $19^{th}$ century
mathematicians even
thought that they are always $0$ or $\pm1$. The first counterexample to this claim occurs at $n = 105$: indeed, $a_{105}(7) = -2$. The number $105$ is the smallest ternary number (see Definition 
\ref{def:ternary}) and these will play
a major role in this article.
Issai Schur proved that every negative even number occurs
as a cyclotomic coefficient.
Emma Lehmer \cite{EmmaLehmer} reproduced his unpublished proof.
Schur's argument is easily adapted to show that 
\emph{every} integer occurs as a cyclotomic coefficient; see Suzuki \cite{Suzi} or Moree and
Hommersom \cite[Proposition 5]{MorHo}.
Let $m\ge 1$ be given. 
Ji, Li and Moree \cite{JLM} adapted Schur's argument and proved that
\begin{equation}
    \label{jilimo}
\{a_{mn}(j): n\ge 1,j\ge 0\}=\mathbb Z.
\end{equation}
Fintzen \cite{Fintzen} determined the set of all cyclotomic coefficients
$a_n(j)$ with $j$ and $n$ in prescribed arithmetic progression, thus 
generalizing \eqref{jilimo}.

We put  
$$A(n)=\max_{k\ge 0}|a_{n}(k)|,\,\,\,{\cal A}=\cup_{n\in \mathbb N}A(n),\,\,\,A\{n\}=\{a_{n}(k):k\ge 0\},$$
in particular $A(n)$ is the  
height of the cyclotomic polynomial
$\Phi_n.$ 

It is a classical result that if $n$ has at most two distinct odd
prime factors, then $A(n)=1,$ cf.\ Lam and 
Leung \cite{LL}. The first non-trivial case arises where $n$ has
precisely three distinct odd prime divisors and thus is of the form $n=p^eq^fr^g,$
with $2<p<q<r$ prime numbers. It is easy to
deduce that 
$A\{p^eq^fr^g\}=A\{pqr\}$
using elementary properties of cyclotomic polynomials (as given
for example in \cite[Lemma 2]{MorHo}). It thus suffices to consider only the 
case where $e=f=g=1$ and so $n=pqr.$  This motivates the following definition.
\begin{Def}
\label{def:ternary}
A cyclotomic polynomial $\Phi_n(x)$ is said to be 
\emph{ternary} if $n = pqr$, with $2<p<q<r$ primes. In this case we call the integer $n=pqr$ \emph{ternary}.  
We put ${\cal A}_t=\{A(n):n\text{~is~ternary}\}.$
\end{Def}
Note that ${\cal A}_t\subseteq {\cal A}.$ In this article we address the nature of the 
sets ${\cal A}, {\cal A}_t$ and  $\mathcal A_{opt}$ (see Definition \ref{def:optimal} below). 
\begin{Con}
\label{nul}
We have ${\cal A}={\mathbb N},$ that is for any given natural number $m$ there is a 
cyclotomic polynomial having height $m.$
\end{Con}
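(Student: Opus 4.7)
The plan is to prove the conjecture conditionally on a prime‑gap hypothesis, as announced in the abstract. Since $A(p^e q^f r^g)=A(pqr)$ for odd primes $p<q<r$ and exponents $e,f,g\ge 1$, it suffices to realise each target height $m\ge 1$ as $A(pqr)$ for some ternary $n=pqr$. The argument naturally splits into an algebraic input (an exact coefficient formula) and an analytic input (existence of primes in prescribed positions).

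For the algebraic input I would use Bachman--Bzd\k{e}ga--type formulas that express the coefficients of $\Phi_{pqr}$, and hence $A(pqr)$, as an explicit arithmetic function of the B\'ezout data coming from $\alpha p+\beta q=1$ and the residue of $r$ modulo $pq$. For each $m$ one selects the smallest odd prime $p=O(m)$ whose admissible range of ternary heights (known to cover an interval of size comparable to $(p+1)/2$) contains $m$. For a suitable $q$ close to $p$, the map $r\bmod pq \mapsto A(pqr)$ is then shown to be surjective onto an initial segment containing $m$, yielding a residue class $r_0 \bmod pq$ with $A(pqr)=m$ for every prime $r\equiv r_0\pmod{pq}$ large enough for the asymptotic formula to apply.

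For the analytic input, two existence claims are required: a prime $q$ in a short window above $p$ (of length roughly $\sqrt{p}$, dictated by the arithmetic tolerance in the algebraic step) and a prime $r$ in the residue class $r_0\bmod pq$. The latter is provided by Dirichlet's theorem, since $r$ is unconstrained in size. The former is precisely what the assumed gap bound $p'-p<\sqrt{p}+1$ for $p\ge 127$ delivers: it forces the next prime after $p$ to lie in the required window. The finitely many cases with $p<127$ are dispatched by direct computation with known tables of ternary heights, which explains the explicit threshold in the hypothesis.

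The main obstacle, and the heart of the proof, is the surjectivity assertion in the algebraic step: identifying residue classes of $r$ modulo $pq$ yielding \emph{exactly} $m$ rather than merely bounding $A(pqr)$ from above or below. This requires a delicate analysis of the sawtooth behaviour of the Bachman--Bzd\k{e}ga coefficient formula, likely with a case split according to the residue of $q \bmod p$ and to the parity of $\alpha$ and $\beta$. Once this structural fact is in place, the prime‑gap input slots in cleanly, and the $\sqrt{p}$ appearing in the gap hypothesis is seen to match the natural arithmetic scale of the problem, which makes the conditional nature of the result essentially optimal for this method.
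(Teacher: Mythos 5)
The statement is a conjecture, and what the paper actually establishes is the conditional result in Theorem~\ref{thm:main1} (any $h$ with $p'-p<\sqrt{p}+1$ for $127\le p<2h$ lies in $\mathcal A_t$). Your proposal is conditional on the same hypothesis, but the mechanism you outline differs from the paper's in two essential ways, and the pivotal step is left unverified.

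First, the role of the prime-gap hypothesis is not what you describe. You want a prime $q$ in the short window $(p,p+\sqrt p)$ above a chosen $p$, and then to vary $r\bmod pq$. In the paper, $q$ is \emph{not} required to be close to $p$: the construction of Moree and Ro\c su (Theorem~\ref{t.Eugenia}) takes a prime $p\ge 4m^2+2m+3$ and produces infinitely many pairs $(q,r)$ (with $q$ potentially much larger than $p$, and $pq<r$) for which $A\{pqr\}=\{-(p-1)/2+m,\dots,(p+1)/2+m\}$, hence $A(pqr)=(p+1)/2+m$ exactly. The shift parameter $m$ can range up to roughly $\sqrt p/2$, so fixing $p=p_n$ one realises every integer in $[(p_n+1)/2,(p_n+1)/2+m_n]$ with $m_n\approx(\sqrt{p_n}-3)/2$. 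The gap hypothesis $p_{n+1}-p_n<\sqrt{p_n}+1$ is then used only to guarantee that this interval reaches $(p_{n+1}-1)/2$, so that the intervals attached to \emph{consecutive} primes $p_n$ glue to cover $\mathbb N$ (Lemma~\ref{l.pi1}a). That is where $\sqrt p$ enters — from the constraint $4m^2+2m+3\le p$ — not from a requirement that $q$ lie near $p$.

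Second, and more seriously, the heart of your argument is the asserted surjectivity of $r\bmod pq\mapsto A(pqr)$ onto an initial segment of integers containing $m$, for a suitable $q$ close to $p$. You flag this as the hard step but do not supply it, and it is not available in the literature in the form you need. What is known (GMW \cite[Proposition 1]{GMW}, quoted in the proof of Lemma~\ref{lem:maxmin}) is that the integers in $[-M(p;q),M(p;q)]$ are exactly the \emph{coefficients} that appear in $\Phi_{pqr}$ as $r$ varies over primes $>q$; that gives control over $A^+(pqr)$ and $A^-(pqr)$ taking all intermediate values, but not over the \emph{height} $A(pqr)=\max(A^+,-A^-)$ hitting every value in $[1,M(p;q)]$. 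The paper sidesteps this by either (i) using Moree–Ro\c su to produce $A\{pqr\}$ equal to a prescribed interval of length $p+1$, so that $A(pqr)$ is read off directly, or (ii) in Lemma~\ref{withWilms} choosing $q=1+(h-1)p$ so that $M(p;q)=h$ exactly, pinning down the height from above and below simultaneously. Without one of these devices, your surjectivity claim remains a gap that cannot be filled by the Bachman–Bzd\k{e}ga coefficient formulas alone.
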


\begin{Con}
\label{een}
We have ${\cal A}_t={\mathbb N},$ that is for any given natural number $m$ there is a ternary
$n$ such that $\Phi_n$ has height $m.$
\end{Con}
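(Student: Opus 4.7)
The plan is to exploit the explicit combinatorial formulas for the coefficients of ternary cyclotomic polynomials, going back to Lam--Leung and refined by Bachman and others. For fixed primes $p<q$, the height $A(pqr)$ depends on $r$ only through the residue class $s\equiv r^{-1}\pmod{pq}$, provided $r$ is larger than a mild explicit bound; write $\tilde A(p,q,s)$ for this ``theoretical'' height. Proving Conjecture~\ref{een} then splits naturally into (i) showing that $\tilde A(p,q,s)$ attains every positive integer as $(p,q,s)$ varies over admissible triples, and (ii) realizing each good triple by an actual prime $r\equiv s^{-1}\pmod{pq}$ in the correct range.

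For step (i), I would isolate a convenient one-parameter family, for instance taking $p$ to be the least prime exceeding $2m$ (available by Bertrand) and $q$ the next prime after $p$, and then analyze $\tilde A(p,q,\cdot)$ as $s$ sweeps through $(\mathbb Z/pq\mathbb Z)^\times$. The Lam--Leung and Bachman formulas express $\tilde A$ in terms of lattice-point counts that vary in a controlled way with $s$, and should be shown to realize every integer in $\{1,2,\dots,\lfloor(p-1)/2\rfloor\}$ or at least in a window containing $m$. Matching the target $m$ then reduces to a purely combinatorial density check. Small values of $m$ are to be disposed of by direct computation, extending existing tables of heights.

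Step (ii) is the true obstacle. Solving $r\equiv s^{-1}\pmod{pq}$ with $r$ prime and $r$ lying in an interval of length $\ll\sqrt{pq}$ above $q$ requires a prime in a short interval inside an arithmetic progression whose modulus is essentially the square of the interval length, well beyond the reach of Bombieri--Vinogradov. A prime-gap hypothesis is therefore essentially forced as input. Under the assumption $p'-p<\sqrt{p}+1$ for consecutive primes $p\ge 127$, one can pigeonhole consecutive primes against the required residue class by a careful choice of $p$ and $q$, locating $r$ inside the admissible window; Andrica's conjecture plays an analogous role in the broader, non-ternary setting of Conjecture~\ref{nul}. Without such an input, the best one can hope for is the exceptional-set bound announced in the abstract, and the ternary strengthening in Conjecture~\ref{een} appears to demand at least the stronger $\sqrt{p}+1$ hypothesis because the modulus $pq$ ties the interval length to the conductor of the progression.
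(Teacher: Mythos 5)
Your decomposition into (i) a combinatorial realizability step and (ii) a prime-finding step is a reasonable framework, and you are right that periodicity in $r\bmod pq$ (Kaplan) is the mechanism that separates the two. But you have located the prime-gap input in the wrong place, and this misdiagnosis hides where the actual difficulty lies.

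In step (ii) there is no short-interval constraint. Once a "theoretical" height $\tilde A(p,q,s)$ is identified, one needs a prime $r>pq$ in a fixed residue class modulo $pq$; Dirichlet (or Linnik, if an explicit bound on $n_h$ is wanted, as in Theorem~\ref{thm:bound}) supplies such an $r$ unconditionally, and Theorem~\ref{t.Eugenia} already produces infinitely many admissible pairs $(q_j,r_j)$. Nothing forces $r$ to lie within $O(\sqrt{pq})$ of $q$; the interval-length-versus-modulus tension you describe is not present.

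The genuine obstruction sits entirely in step (i), and your claim there is both unproven and too strong. It is not known (and for small $p$ is false) that $\tilde A(p,q,\cdot)$ sweeps out all of $\{1,\dots,\lfloor(p-1)/2\rfloor\}$ as $s$ and $q$ vary. What is actually available is Moree--Ro\c su's construction, which realizes exactly the heights $(p+1)/2+m$ with $0\le m$ and $4m^2+2m+3\le p$, i.e.\ a window of length $\approx\sqrt{p}/2$ starting at $(p+1)/2$. The set $\mathcal R$ of such heights (see \eqref{R}) covers $\mathbb N$ precisely when consecutive primes satisfy $p_{n+1}-p_n<\sqrt{p_n}+1$, because the window attached to $p_n$ must reach $(p_{n+1}+1)/2$. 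This is how Conjecture~\ref{Pi.Andr} enters: it governs whether the realizable heights leave gaps, not whether one can find the prime $r$. Your plan of taking $p$ to be the least prime \emph{above} $2m$ also runs against the grain of this construction, which wants $p$ just \emph{below} $2m$ (so that $m=(p+1)/2+m'$ for a small $m'$). Absent a proof that every value in $\{1,\dots,\lfloor(p-1)/2\rfloor\}$ is attained for a single $p$ --- a statement far stronger than anything in the literature --- step (i) of your outline is where the argument breaks, and the prime-gap hypothesis must be moved there to repair it.
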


The argument of Schur cannot be adapted to resolve Conjecture \ref{nul}, as it allows one to 
control only the coefficients in a tail of a polynomial that quickly becomes very
large if we want to show that some larger number occurs as a coefficient, and typically will 
have much larger coefficients than the coefficient constructed.
Instead, we will make use of various properties of ternary cyclotomic polynomials. This
class of cyclotomic polynomials has been intensively studied as it is the simplest 
one where
the coefficients display non-trivial behavior.
For these we still have
$\{a_{n}(j): n{\rm~is~ternary},\,j\ge 0\}=\mathbb Z,$
as a consequence of the following result.
\begin{Thm}[Bachman, \cite{B2}]
\label{t.Bach04}
For every odd prime $p$ there exists an infinite family of polynomials $\Phi_{pqr}$ such that 
$A\{pqr\}=[-(p\!-\!1)/2,(p\!+\!1)/2]\cap{\mathbb Z}$ and another one
such that 
$A\{pqr\}=[-(p\!+\!1)/2,(p\!-\!1)/2]\cap{\mathbb Z}.$
\end{Thm}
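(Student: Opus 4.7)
The plan is to exhibit the two infinite families explicitly by specifying the residues of $q$ and $r$ modulo $p$, applying Dirichlet's theorem on primes in arithmetic progressions to produce infinitely many such pairs, and then using an explicit formula for ternary cyclotomic coefficients to read off the coefficient set $A\{pqr\}$. This reduces a statement about polynomials of arbitrarily large degree to a finite congruence-theoretic computation modulo $p$.

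Concretely, I would first recall a closed-form description of the coefficients $a_{pqr}(k)$. Let $q^{\ast}, r^{\ast}\in\{1,\dots,p-1\}$ denote the residues with $q q^{\ast}\equiv 1$ and $r r^{\ast}\equiv 1 \pmod p$. Kaplan's lemma (equivalently Bachman's 2003 formula) expresses $a_{pqr}(k)$ as a difference of cardinalities of two explicit subsets of $\{0,1,\dots,p-1\}^{2}$ determined by the rectangle of sizes governed by $q^{\ast}$ and $r^{\ast}$, together with the residue of $k$ modulo $qr$. Crucially, this representation depends on $q,r$ only through the pair $(q^{\ast}, r^{\ast}) \bmod p$, so once the residues are fixed, the set of values $\{a_{pqr}(k):k\ge 0\}$ depends only on $p$.

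Next I would choose the residues of $q$ and $r$ to force the coefficients to sweep out the desired interval. The natural candidate is to take $q \equiv 2 \pmod p$ (so $q^{\ast} = (p+1)/2$) and $r$ in a compatible residue class that makes Kaplan's lattice-point count run over every signed integer in $[-(p-1)/2,(p+1)/2]$. Swapping the roles of $q$ and $r$, or replacing $2$ by $-2$ (i.e.\ $q\equiv -2\pmod p$), then delivers the mirror interval $[-(p+1)/2,(p-1)/2]$. Dirichlet's theorem supplies infinitely many primes $q$ and, for each such $q$, infinitely many primes $r$ with the required residue and with $r>q>p$, so both families are infinite.

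The main obstacle is step three: verifying that every integer in the target interval actually appears as some $a_{pqr}(k)$ (surjectivity) while no value outside it does (containment). Containment follows from Beiter's/Bachman's bound $A(pqr)\le (p+1)/2$ and a sign analysis of Kaplan's formula. Surjectivity is the delicate part and requires checking that as $k$ ranges over residue classes modulo $qr$, the associated lattice-point count takes every integer value in the interval. Because this count is piecewise linear in the parameters, the verification reduces to a finite case-by-case inspection modulo $p$, which can be carried out once the residue conditions on $q,r$ have been chosen optimally; the infinitude provided by Dirichlet then completes the proof.
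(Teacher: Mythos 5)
The paper does not prove this statement at all: Theorem~\ref{t.Bach04} is quoted verbatim from Bachman's 2004 paper \cite{B2} and is used as a black box (e.g.\ in Lemma~\ref{l:Aopt} for the $m=0$ case). So there is no ``paper proof'' to compare against, and the question is only whether your reconstruction of Bachman's argument is sound.

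Your overall template (pick congruence conditions on $q,r$, invoke Dirichlet for infinitude, use a Kaplan--Bachman type formula to read off $A\{pqr\}$) is the right shape, but it has two genuine gaps. First, the assertion that once $q^{*},r^{*}\bmod p$ are fixed ``the set of values $\{a_{pqr}(k):k\ge 0\}$ depends only on $p$'' is false, and this collapses the whole reduction to a finite check mod $p$. Kaplan's periodicity, stated in this very paper as Proposition~\ref{prop:Kaplan}, says that for $r,s>pq$ one has $A\{pqr\}=A\{pqs\}$ when $s\equiv r\pmod{pq}$ and $A\{pqr\}=-A\{pqs\}$ when $s\equiv -r\pmod{pq}$; the period is $pq$, not $p$. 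Different residues of $r$ modulo $q$ (with the same residue mod $p$) generically produce different coefficient sets. Bachman's construction accordingly pins down $r$ in a residue class modulo $pq$ (and $q$ in a residue class mod $p$), and only then does Dirichlet supply infinitely many admissible $(q,r)$; conditions ``modulo $p$'' alone are not enough. Incidentally, the cleaner way to get the mirror family $[-(p+1)/2,(p-1)/2]$ is not to ``swap $q$ and $r$'' but to replace $r$ by a prime $\equiv -r\pmod{pq}$, which flips the sign of the whole set by Proposition~\ref{prop:Kaplan}.

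Second, the containment step cites ``Beiter's/Bachman's bound $A(pqr)\le(p+1)/2$'' as if it were a theorem. It is not: this is Sister Beiter's conjecture, which is false in general (Gallot and Moree \cite{GM} produced counterexamples; the unconditional bound recalled in this paper is $A(pqr)\le p-1$, and the corrected Beiter conjecture is $A(pqr)\le 2p/3$). The inequality $A(pqr)\le(p+1)/2$ only holds under extra congruence hypotheses (e.g.\ $q\equiv\pm1\pmod p$), so in Bachman's argument containment must be verified directly for the specific residue classes chosen rather than appealed to as a general upper bound. Until both of these points are repaired --- specify $r$ modulo $pq$, and prove the upper bound for that class rather than citing Beiter --- the sketch does not establish the theorem.
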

\indent If $n$ is ternary, then $A\{n\}$ consists of consecutive integers. Moreover, we 
have $|a_n(j+1)-a_n(j)|\le 1$ for $j\ge 0$; see Gallot and Moree \cite{buur}.
Note that for each of the members $\Phi_{pqr}$ of the two families the cardinality of $A\{pqr\}$ is $p+1.$ 
This is not always the case for arbitrary ternary $n$ and 
even best possible in the sense that $\# A\{pqr\}\le p+1$ for arbitrary ternary $n$
(by \cite[Corollary 3]{B1}).
\begin{Def}
\label{def:optimal}
If the cardinality of $A\{pqr\}$ is exactly $p+1,$ we say that $\Phi_{pqr}$ is ternary optimal and
call $n=pqr$ optimal. We denote 
the set of all $A(n)$ with $n$ optimal by $\mathcal A_{opt}$.
\end{Def}

Note that the bound for the size of ${\cal A}\{pqr\}$ depends only on the smallest prime
factor, $p$. Similarly, it has been known since the 19th century that $A(pqr)\leq p-1$.

We expect the following to be true
regarding ternary optimal polynomials.
\begin{Con}
\label{opt}
We have ${\cal A}_{opt}={\mathbb N}\backslash \{1,5\}.$
\end{Con}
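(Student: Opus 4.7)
The inclusion $\{1,5\} \cap \mathcal{A}_{opt} = \emptyset$ should be the more tractable direction. The case $m=1$ is immediate from the classical Lam--Leung result cited in the introduction: $A(n)=1$ forces $n$ to have at most two distinct odd prime factors, hence no ternary $n$ has height $1$. For $m=5$, I would argue as follows. For every optimal $pqr$ the set $A\{pqr\}$ is an interval of $p+1$ consecutive integers containing the constant term $1$, and since the sum of coefficients of $\Phi_{pqr}$ equals $1$ the interval must also contain some negative value, and therefore both $-1$ and $0$. The height is thus at least $(p+1)/2$, so only $p \in \{3,5,7\}$ can possibly yield height $5$; a direct enumeration of intervals of length $p$ containing $\{-1,0,1\}$ shows that this narrows down to $p=7$ with interval $[-5,2]$ or $[-2,5]$. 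Ruling these two cases out should reduce to a finite check against Beiter/Bachman-type formulae for $a_{7qr}(j)$, but this step requires care and is the likely point at which the conjecture currently sits open.

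For the reverse inclusion the plan is to produce, for each $m\geq 2$ with $m\neq 5$, an optimal ternary $pqr$ with $A(pqr)=m$. The cornerstone is Theorem \ref{t.Bach04}, which yields $(p+1)/2 \in \mathcal{A}_{opt}$ for every odd prime $p$ and so already covers the subset $\{2,3,4,6,7,9,10,12,15,16,\ldots\}$. The remaining values require \emph{non-symmetric} intervals: each admissible interval has the form $[1-k, p+1-k]$ with $k\in\{1,\ldots,p-1\}$, giving height $\max(k-1, p+1-k)$, and as $k$ varies this hits every integer in $\{(p+1)/2,\ldots,p\}$. Given $m$, Bertrand's postulate provides a prime $p$ with $(p+1)/2 \leq m \leq p$, and a shift $k$ with $\max(k-1, p+1-k)=m$ then exists.

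The heart of the construction is a generalisation of Bachman's argument from the two symmetric shifts $k=(p\pm 1)/2$ to arbitrary admissible $k$. Bachman parametrises $q$ and $r$ in prescribed residue classes modulo $p$ so that $A\{pqr\}$ takes one of the two symmetric forms. Perturbing those residue classes should translate the coefficient pattern by a controlled amount; coupled with the jump estimate $|a_n(j+1)-a_n(j)|\leq 1$ of Gallot--Moree \cite{buur}, which guarantees that every integer between the extreme coefficients is attained, this should realise arbitrary admissible shifts. Dirichlet's theorem on primes in arithmetic progressions then supplies the infinite families of $(q,r)$ needed in each chosen residue class.

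\textbf{Main obstacle.} The decisive step is proving that every admissible shifted interval is actually attained as $A\{pqr\}$ for some optimal ternary $pqr$. This calls for a systematic analysis of Beiter-type explicit formulae for $a_{pqr}(j)$ across all residue classes of $(q,r)$ modulo $p$, and the conjectured appearance of $5$ as the sole non-trivial exception reflects a genuine low-prime obstruction at $p=7$ which the proof must show dissolves for every $p\geq 11$. As with Conjectures \ref{nul} and \ref{een}, one may ultimately need to input a prime-distribution hypothesis (Andrica's conjecture or a suitable quantitative prime-gap estimate) in order to secure enough flexibility in the choice of $q$ and $r$ for every target height $m$.
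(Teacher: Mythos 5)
This statement is a conjecture in the paper; the authors only prove it conditionally on the prime-gap Conjecture~\ref{Pi.Andr}, so your closing remark that a prime-distribution hypothesis is likely needed is appropriate, but your proposal misjudges which parts are open and which are not. Your exclusion of $1$ via Lam--Leung is a logical error: you need the converse of their theorem, which is false, since flat ternary cyclotomic polynomials exist (Kaplan constructed infinitely many ternary $n$ with $A(n)=1$). The valid argument, which you also sketch, is that optimality forces $A\{pqr\}$ to consist of $p+1\ge 4$ consecutive integers, whence $A(pqr)\ge(p+1)/2\ge 2$. Your exclusion of $5$ correctly narrows to $p=7$ with intervals $[-5,2]$ or $[-2,5]$, but your claim that closing this case is where the conjecture ``currently sits open'' is wrong: Zhao and Zhang proved $A\{7qr\}\subseteq[-4,4]$, which immediately rules both out. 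Thus $1,5\notin\mathcal{A}_{opt}$ is already unconditional in the paper (Lemma~\ref{l:1563}), and the genuinely open part is only the inclusion $\mathbb{N}\setminus\{1,5\}\subseteq\mathcal{A}_{opt}$.

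For that inclusion, your plan drastically overestimates what a ``perturbed Bachman argument'' can give. The state of the art is the Moree--Ro\c su theorem (Theorem~\ref{t.Eugenia} here), which realizes optimal heights $(p+1)/2+m$ only under the constraint $4m^2+2m+3\le p$, i.e.\ roughly a window of size $\sqrt{p}/2$ above $(p+1)/2$, not the full interval up to $p$ that your Bertrand-postulate reduction requires. This is exactly why the paper needs a prime-gap bound $p_{n+1}-p_n<\sqrt{p_n}+1$ (far beyond Bertrand) and why Section~3 deploys Heath-Brown and Yu to bound the exceptional heights. Finally, your proposal misses that even under the prime-gap hypothesis, $63$ escapes the set $\mathcal{R}$ of heights produced directly by Theorem~\ref{t.Eugenia} (Conjecture~\ref{c.1.5.63}) and has to be recovered by a separate explicit construction in Lemma~\ref{l:1563}; so the mechanism producing exceptional heights is genuinely two-step, not a single low-prime obstruction at $p=7$ as you suggest.
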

We will see that this conjecture is closely related to the following
prime number conjecture we propose (with $p_n$ the 
$n^{\text{th}}$
prime number).
\begin{Con}
\label{Pi.Andr}
Let $n\ge 31$ (and so $p_n\ge 127$). Then
\begin{equation}
\label{eq:primeinequality}
p_{n+1}-p_n< \sqrt{p_n}+1. 
\end{equation}
\end{Con}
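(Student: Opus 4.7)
The plan is to attack Conjecture \ref{Pi.Andr} by splitting the range of $p_n$ into a finite portion to be verified numerically and an asymptotic portion where one would like to apply a sharp prime-gap estimate. First I would check \eqref{eq:primeinequality} directly for all primes in the range $127\le p_n\le X$ for some large threshold $X$. Since only \emph{record} prime gaps $g_n=p_{n+1}-p_n$ can possibly threaten the inequality, the verification reduces to consulting the known table of maximal prime gaps. Current computations reach beyond $p_n\approx 10^{18}$, and the largest recorded gaps are on the order of a few thousand, whereas $\sqrt{p_n}$ is already of order $10^{9}$, so this step is straightforward.

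The second step is to establish the inequality for all $p_n>X$. The cleanest route would be to invoke a bound of the form $p_{n+1}-p_n=o(\sqrt{p_n})$, and here one hits a fundamental wall. The best known unconditional result, due to Baker--Harman--Pintz, is $p_{n+1}-p_n\ll p_n^{0.525}$, whose exponent already exceeds $1/2$ and so is too weak. Under the Riemann Hypothesis one obtains only a Cram\'er-type bound $p_{n+1}-p_n\ll\sqrt{p_n}\log p_n$, which still fails by a logarithmic factor; sharpenings based on pair-correlation or density hypotheses do not quite close the gap either.

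The main obstacle is therefore the asymptotic range: the conjectured inequality is slightly stronger than what even the Riemann Hypothesis is currently known to deliver. Closing this gap would amount to proving a Cram\'er-type bound $p_{n+1}-p_n=O((\log p_n)^2)$, or at least lowering the unconditional prime-gap exponent below $1/2$, both of which are well beyond the reach of present-day analytic number theory. Accordingly, I do not expect an unconditional proof to be within reach; the most a realistic plan can deliver is the inequality for $127\le p_n\le X$ by direct computation, together with the observation that it is implied for all larger $p_n$ by any Cram\'er-type bound or (with room to spare) by Andrica's conjecture invoked in the introduction.
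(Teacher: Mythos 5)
This is a conjecture, not a theorem, and the paper offers no proof of it — only a discussion of why it is out of reach, together with numerical verification (via Visser and the Nicely tables) for $127\le p_n\le 2\cdot 2^{63}\approx 1.8\cdot 10^{19}$. Your analysis matches the paper's own commentary closely: the finite range is checked against the tables of maximal prime gaps; for the asymptotic range, the best unconditional result (Baker--Harman--Pintz, exponent $0.525$) and even the RH bound $d_n\ll\sqrt{p_n}\log p_n$ of Cram\'er both fall short of $d_n<\sqrt{p_n}+1$; and Cram\'er's heuristic conjecture $d_n=O((\log p_n)^2)$ would settle it for large $n$. You are correct to conclude that no proof is currently available.

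However, your final parenthetical is wrong: Andrica's conjecture does \emph{not} imply Conjecture \ref{Pi.Andr}, and certainly not ``with room to spare.'' Andrica asserts $p_{n+1}-p_n<\sqrt{p_n}+\sqrt{p_{n+1}}\approx 2\sqrt{p_n}$, whereas \eqref{eq:primeinequality} demands the roughly twice-as-strong bound $p_{n+1}-p_n<\sqrt{p_n}+1$. The implication runs the other way: Conjecture \ref{Pi.Andr} implies Andrica's conjecture. The paper itself states that Conjecture \ref{Pi.Andr} ``is a bit stronger than Andrica's conjecture,'' and this asymmetry is precisely why the paper needs the sharper Conjecture \ref{Pi.Andr} to conclude $\mathcal{A}_t=\mathbb{N}$ via Lemma \ref{l.pi1}a), while Andrica's conjecture alone only yields the weaker conclusion of Theorem \ref{thm:Andrica} about maximum coefficients, through the two-sided interval argument in Lemma \ref{l.pi1+-} (which exploits both $m_n$ and $m_{n+1}$ to cover $I_n$). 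You should delete or reverse that claim.
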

Although prime gaps 
$d_n:=p_{n+1}-p_n$
have been studied {\it in extenso} in the literature,  
we have not come across this particular conjecture. 
It is a bit stronger than Andrica's conjecture (see Visser \cite{Visser} for some numerics). 
\begin{Con}
\label{Andr}{\rm (Andrica's conjecture).}
For $n\ge 1$,  
$p_{n+1}-p_n< \sqrt{p_n}+\sqrt{p_{n+1}}$, 
or equivalently $\sqrt{p_{n+1}}-\sqrt{p_n}<1$, or equivalently
$p_{n+1}-p_n< 2\sqrt{p_n}+1$.
\end{Con}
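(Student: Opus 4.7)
The statement is Andrica's famous 1985 conjecture, so an honest \emph{proof} plan must acknowledge that this is a well-known open problem. The proposal therefore separates the elementary content from the analytic core.

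First, I would dispose of the three equivalences at once. From the identity
\[
p_{n+1}-p_n=(\sqrt{p_{n+1}}-\sqrt{p_n})(\sqrt{p_{n+1}}+\sqrt{p_n}),
\]
the inequality $p_{n+1}-p_n<\sqrt{p_n}+\sqrt{p_{n+1}}$ is equivalent to $\sqrt{p_{n+1}}-\sqrt{p_n}<1$; squaring the latter rewrites it as $p_{n+1}<p_n+2\sqrt{p_n}+1$, which is the third form. So it suffices to establish $d_n:=p_{n+1}-p_n<2\sqrt{p_n}+1$ for every $n\ge1$.

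Next, for small $n$ one verifies the inequality directly. The conjecture has been checked numerically well past $p_n=10^{18}$, where the maximal recorded prime gap is around $1500$, far below $2\sqrt{p_n}+1$ of order $10^{9}$ there. This reduces the task to producing an explicit threshold $X$ and proving the inequality for all $p_n>X$ by analytic means.

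The hard part is precisely this asymptotic step, and it is where every approach so far has stopped. The best unconditional prime-gap estimate is the Baker--Harman--Pintz bound $d_n\ll p_n^{0.525}$, whose exponent exceeds $1/2$ and therefore cannot yield $d_n<2\sqrt{p_n}+1$ once $p_n$ is large. Even the Riemann Hypothesis gives only $d_n=O(\sqrt{p_n}\log p_n)$, off by a logarithmic factor; Cram\'er's conjecture $d_n=O((\log p_n)^2)$, or indeed any unconditional bound of the form $d_n=o(\sqrt{p_n})$, would close the gap immediately. The main obstacle is therefore the same one that has kept Andrica's conjecture open for forty years: producing a genuinely square-root prime-gap bound with a sufficiently small implied constant, without assuming any unproved hypothesis. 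I do not see how to supply a new idea here, which is precisely why the present article treats Andrica's conjecture as a working hypothesis (used only to derive consequences for cyclotomic heights) rather than as a theorem to be proved.
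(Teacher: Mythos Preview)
Your proposal is correct in spirit and substance: the statement is labeled a \emph{Conjecture} in the paper, and the paper offers no proof of it whatsoever---it merely states Andrica's conjecture, notes that it is slightly weaker than Conjecture~4, and later uses it as a hypothesis in Theorem~5. Your verification of the three equivalences (via the factorization $p_{n+1}-p_n=(\sqrt{p_{n+1}}-\sqrt{p_n})(\sqrt{p_{n+1}}+\sqrt{p_n})$ and squaring $\sqrt{p_{n+1}}<1+\sqrt{p_n}$) is the only actual mathematical content here, and it is correct; the paper does not even spell this out. Your survey of the obstructions (Baker--Harman--Pintz, Cram\'er under RH, etc.) accurately explains why no proof is expected, which is consistent with the paper's own remarks surrounding the conjecture.
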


Both conjectures seem to be far out of reach, as under RH the best
result is due to Cram\'er \cite{Cramer} who showed in 
1920 that $d_n=O(\sqrt{p_n}\log p_n)$. More explicitly, 
Carneiro et al.\,\cite{CMS} showed under RH that $d_n\le \frac{22}{25}\sqrt{p_n}\log p_n$
for every $p_n>3$.

There is a whole range of conjectures on gaps between consecutive primes. The most famous one is 
Legendre's that there is a prime between consecutive squares is a bit weaker, but for example 
Firoozbakht's conjecture that
$p_{n}^{1/n}$ is a strictly decreasing function of $n$ 
is much stronger.
Firoozbakht's conjecture implies
that $d_{n}<(\log p_{n})^{2}-\log p_{n}+1$ for all $n$ 
sufficiently large (see Sun \cite{Sun}), contradicting a heuristic 
model; see 
Banks et al.\,\cite{BFT}, suggesting that given any $\epsilon>0$ there are infinitely many 
$n$ such that $d_n>(
2e^{-\gamma}
-\epsilon)(\log p_n)^2,$
with $\gamma $ Euler's constant.
This is in line with Cram\'er's \cite{Cramer36} conjecture of 1936 that 
$$0<\liminf_{x \to \infty}
\frac{\max\{d_n: p_n \leq x \} }{ (\log x)^2}
\leq 
\limsup_{x \to  \infty}
\frac{\max\{d_n: p_n \leq x \} }{ (\log x)^2}
<\infty,
$$ who gave heuristical arguments in support of
this assertion.
This is in line with the famous conjecture that 
$$0<\liminf_{x \to \infty}
\frac{\max\{d_n: p_n \leq x \} }{ (\log x)^2}
\leq 
\limsup_{x \to  \infty}
\frac{\max\{d_n: p_n \leq x \} }{ (\log x)^2}
<\infty,
$$ stated in 1936 by Cram\'er \cite{Cramer36}, who 
also provided heuristic arguments in support of it.
His conjecture implies that
$d_n = O\left((\log p_n)^2\right)$, which if 
true, clearly
shows that the claimed bound in Conjecture~\ref{Pi.Andr}
holds for all sufficiently large $n$.
Further work on $d_n$ can be found in \cite{BFT, longgaps, MR1349149}.

We denote the set of natural numbers $\le h$ 
by $\mathbb N_{h}.$  

\begin{Thm}
\label{thm:main1}
Let $h$ be 
an integer such that \eqref{eq:primeinequality} holds for $127\le p_n<2h$. Then
$$
\mathbb N_h\subseteq \mathcal A_t \subseteq 
\mathcal A,\,\,\,\mathbb N_h\backslash \{1,5\}\subseteq \mathcal A_{opt}.$$
Moreover, $1,5\not \in {\mathcal A}_{opt}$.
\end{Thm}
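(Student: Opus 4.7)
I would separate the three assertions, starting with the easy ones and closing with the central inclusion. The exclusion $\{1,5\}\cap\mathcal A_{opt}=\emptyset$ is a structural obstruction. If $\Phi_{pqr}$ is optimal then $\#A\{pqr\}=p+1\ge 4$, which is incompatible with height~$1$ since any $A\{pqr\}\subseteq\{-1,0,1\}$ has cardinality at most~$3$. For height~$5$, the fact that $A\{pqr\}$ consists of consecutive integers forces $A\{pqr\}=[-a,b]\cap\mathbb Z$ with $a+b=p$ and $\max(a,b)=5$; combined with the classical bound $A(pqr)\le p-1$, this leaves only $p=7$, and the two remaining intervals $[-2,5]$, $[-5,2]$ are to be ruled out by a direct analysis of the coefficient formulas for $\Phi_{7qr}$.

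The central content is $\mathbb N_h\setminus\{1,5\}\subseteq\mathcal A_{opt}$, which I would attack constructively. Bachman's Theorem~\ref{t.Bach04} supplies optimal $\Phi_{pqr}$ of height $(p+1)/2$ for every odd prime $p$, but since $\{(p+1)/2:p\text{ odd prime}\}$ misses many integers (already $8,11,13,\dots$), Bachman alone is not enough. I would complement it with a companion construction giving, for each prime $p$ and each admissible $m$ with $(p+1)/2\le m\le p-1$ and $(p,m)\ne(7,5)$, infinitely many $(q,r)$ such that $A\{pqr\}=[-(p-m),m]\cap\mathbb Z$. Granting this, the task for a given target $m\le h$ reduces to locating a prime $p$ in the short admissibility window demanded by the construction, an interval essentially of length $\sqrt m$ lying just below $2m$.

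This is where \eqref{eq:primeinequality} enters: Bertrand's postulate is far too weak to guarantee a prime in such a short window, but the hypothesised gap bound $p_{n+1}-p_n<\sqrt{p_n}+1$ on $[127,2h)$ places a prime into the required window for every $m$ with $2m\ge 127$; the finitely many smaller $m$ are settled by explicit tabulation of optimal ternary polynomials. The remaining inclusion $\mathbb N_h\subseteq\mathcal A_t$ is then immediate from the central one together with $1,5\in\mathcal A_t$: height~$1$ is realised by any flat ternary polynomial and height~$5$ by an explicit non-optimal $\Phi_{pqr}$.

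I expect the principal obstacle to be the companion construction itself, namely extending Bachman's families from the single height $(p+1)/2$ to every $m$ in $[(p+1)/2,p-1]$ apart from the sporadic pair $(p,m)=(7,5)$. This is where the careful coefficient analysis of ternary cyclotomic polynomials developed in the paper, together with the prime-gap hypothesis \eqref{eq:primeinequality}, must do the decisive work; the case analysis excluding $1$ and $5$ from $\mathcal A_{opt}$ is delicate but secondary to this main difficulty.
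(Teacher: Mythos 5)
Your overall architecture matches the paper's: pin down $1,5\notin\mathcal A_{opt}$ by structural arguments, realise $\mathbb N_h\setminus\{1,5\}$ via a Bachman-type construction for varying heights, and invoke the prime-gap hypothesis to fill the gaps between consecutive "base" heights. The argument that $1\notin\mathcal A_{opt}$ (since $\#A\{pqr\}=p+1\ge 4$) and the reduction of the case $h=5$ to $p=7$ with $A\{7qr\}\in\{[-5,2],[-2,5]\}$ are exactly what the paper does; where you propose a "direct analysis" of $\Phi_{7qr}$, the paper simply cites the Zhao--Zhang bound $A\{7qr\}\subseteq[-4,4]$, which is a legitimate shortcut.

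However, there is a substantive error at the heart of your plan. The companion construction you posit --- optimal $\Phi_{pqr}$ with $A\{pqr\}=[-(p-m),m]$ for \emph{every} $m$ in the range $(p+1)/2\le m\le p-1$ --- is much stronger than what is available, and it is internally inconsistent with the rest of your argument. If such a family existed, then for a given target height $m$ one would only need a prime $p$ with $m+1\le p\le 2m-1$, an interval of length $m-1$; Bertrand's postulate alone would do, and the hypothesis \eqref{eq:primeinequality} would be superfluous, contradicting its role in the theorem. What Moree--Ro\c su actually provide (the paper's Theorem~\ref{t.Eugenia}) is optimal $\Phi_{pqr}$ of height $(p+1)/2+m$ subject to $4m^2+2m+3\le p$, so the offset $m$ is bounded by roughly $\sqrt p/2$, and the admissible window for $p$ below $2m$ has length on the order of $\sqrt m$. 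You do gesture at a $\sqrt m$-length window in the next breath, so you seem to be aware of the correct scale, but the stated construction does not deliver it, and the two claims cannot both be true.

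A second, smaller, gap: even with the correct Moree--Ro\c su range and the gap hypothesis, the resulting set $\mathcal R$ of achievable heights has a genuine hole at $63$ (the paper's Lemma~\ref{l:upto2h} only yields $\mathbb N_h\setminus\{1,5,63\}\subseteq\mathcal R$). The value $63=(127+1)/2$ falls just short because $127$ is preceded by an unusually long prime gap relative to $\sqrt{113}$. The paper handles this by invoking the more refined Theorem~3.1 of Moree--Ro\c su with $p=109$, $l=15$ to show $63\in\mathcal A_{opt}$ directly. Your plan of "explicit tabulation" for small $m$ could in principle catch this, but you should be aware that it is the only exceptional value and that it requires going beyond Theorem~\ref{t.Eugenia} itself, not merely beyond the prime-gap hypothesis.
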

\begin{cor}
If Conjecture $4$ is true, then so are
Conjectures $1,2$ and $3$.
\end{cor}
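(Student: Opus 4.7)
The plan is to combine two ingredients: (a) an explicit family of optimal ternary polynomials $\Phi_{pqr}$ whose height can be tuned across a window $\{\tfrac{p+1}{2},\tfrac{p+1}{2}+1,\dots,\tfrac{p+1}{2}+g(p)\}$ with $g(p)$ of order $\sqrt p/2$, extending Bachman's Theorem~\ref{t.Bach04} (which supplies only the two extreme shifts, both of height exactly $(p+1)/2$); and (b) the prime-gap hypothesis \eqref{eq:primeinequality}, which forces the windows attached to consecutive primes to overlap and so cover every integer up to $h$. For an optimal ternary $n=pqr$ the block $A\{pqr\}=\{-k,-k+1,\dots,-k+p\}$ consists of $p+1$ consecutive integers containing both $0$ and $1$, for some $k\in\{1,\dots,p-1\}$, with height $\max(k,p-k)$; Bachman produces the central shifts $k=(p\pm 1)/2$, and the refined construction must realise also the shifts $k=(p-1)/2-j$ for each $0\le j\le g(p)$, producing height $\tfrac{p+1}{2}+j$.

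To establish $\mathbb N_h\setminus\{1,5\}\subseteq\mathcal A_{opt}$, I fix $m\in\mathbb N_h\setminus\{1,5\}$. Small $m$, namely those for which the largest odd prime $p\le 2m-1$ is below $127$, are handled by a finite case analysis combining Bachman with direct computation. For larger $m$, choose the largest odd prime $p\le 2m-1$ and let $p'$ be its successor; then $127\le p<2h$, so the hypothesis gives $p'-p<\sqrt p+1$, whence
\[
0\le m-\tfrac{p+1}{2}<\tfrac{p'-p}{2}<\tfrac{\sqrt p+1}{2}\le g(p),
\]
placing $m$ in the accessible range of optimal heights for the prime $p$ and yielding an optimal $\Phi_{pqr}$ with $A(pqr)=m$. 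Since $\mathcal A_{opt}\subseteq\mathcal A_t\subseteq\mathcal A$, this also gives $\mathbb N_h\subseteq\mathcal A_t\subseteq\mathcal A$ once we additionally exhibit ternary (not necessarily optimal) $n$ realising $A(n)=1$ and $A(n)=5$, which is a direct check against small-ternary tables.

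Finally, for $1,5\notin\mathcal A_{opt}$: if $A(pqr)=1$ then $A\{pqr\}\subseteq\{-1,0,1\}$ has at most three elements, contradicting optimality ($\#A\{pqr\}=p+1\ge 4$). If $A(pqr)=5$ with $pqr$ optimal, then $p\ge 11$ is excluded since $(p+1)/2\ge 6$, and $p\in\{3,5\}$ is excluded by the classical bound $A(pqr)\le p-1\le 4$; only $p=7$ survives, and the two a priori admissible blocks $\{-2,\dots,5\}$ and $\{-5,\dots,2\}$ must then be ruled out by a finer structural argument on the coefficients of $\Phi_{7qr}$ (e.g.\ via the classification, for $p=7$, of the actually occurring shifts $k$).

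The main obstacle is the constructive part (a): producing, for each integer $k$ in the required window, explicit congruence conditions on $(q,r)\bmod p$ that pin down $A\{pqr\}$ to the prescribed translated block. Bachman covers only the two central shifts, and interpolating between them in a way that matches the window width $\sqrt p/2$ demanded by \eqref{eq:primeinequality} is precisely what forces the introduction of the prime-gap hypothesis. A secondary subtlety is the $p=7,\ m=5$ step in the ``moreover'' clause, which requires a specific combinatorial exclusion for ternary cyclotomic polynomials with smallest prime factor $7$.
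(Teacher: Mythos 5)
The corollary as stated is an immediate consequence of Theorem~\ref{thm:main1}: if Conjecture~\ref{Pi.Andr} holds, then the hypothesis of Theorem~\ref{thm:main1} is satisfied for every $h$, and letting $h\to\infty$ gives $\mathcal A_t=\mathcal A=\mathbb N$ and $\mathcal A_{opt}=\mathbb N\setminus\{1,5\}$. You are not invoking Theorem~\ref{thm:main1} but re-deriving it, and while your overall strategy (Moree--Ro\c su's Theorem~\ref{t.Eugenia} supplying optimal heights in the window $[(p+1)/2,(p+1)/2+g(p)]$, glued together by the prime-gap hypothesis) is indeed the paper's strategy for Theorem~\ref{thm:main1}, your inequality chain has a genuine gap.

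Concretely, the constraint $4m^2+2m+3\le p$ in Theorem~\ref{t.Eugenia} gives $g(p)=\lfloor(\sqrt p-1)/2\rfloor$, not anything as large as $(\sqrt p+1)/2$; your step ``$(\sqrt p+1)/2\le g(p)$'' is false for every prime. Rounding alone does not save it: $m-\tfrac{p+1}{2}<(\sqrt p+1)/2$ with the left side an integer only yields $m-\tfrac{p+1}{2}\le\lfloor(\sqrt p+1)/2\rfloor$, which exceeds $g(p)$ by $1$. What actually closes the argument in Lemma~\ref{l.pi1}a is the extra fact that $d_p=p'-p$ is even, so $m-\tfrac{p+1}{2}\le d_p/2-1$, combined with $m_n>d_n/2-2$ deduced from $m_n\ge(\sqrt{p_n}-3)/2$ and $d_n<\sqrt{p_n}+1$ via a two-sided integrality argument (the paper also implicitly uses that the next prime's window restarts at $(p'+1)/2\in\mathcal R$). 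Without the parity of the gap, your chain does not go through.

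A second, smaller, issue is that ``a finite case analysis combining Bachman with direct computation'' for small $m$ underestimates the $m=63$ case. The value $63$ is the unique $m<64$ besides $1,5$ that lies outside $\mathcal R$, so no prime $p$ makes $63$ fall into the window $[(p+1)/2,(p+1)/2+g(p)]$; the paper's Lemma~\ref{l:1563} constructs an optimal witness ($n=109\cdot6803\cdot12084113$) via the more precise Theorem~3.1 of \cite{Eugenia}, and a naive ``direct computation'' of $\Phi_n$ is infeasible at that size. Your treatment of $1,5\notin\mathcal A_{opt}$ is fine (and your cardinality argument for $A(pqr)=1$ is a nice alternative to the paper's $(p+1)/2\ge2$ bound), but the $p=7$, $m=5$ exclusion does require the Zhao--Zhang bound $A\{7qr\}\subseteq[-4,4]$, which you correctly flag as outstanding.
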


Theorem \ref{thm:main1} is in essence a consequence of a result of Moree and 
Ro\c su \cite{Eugenia} (Theorem \ref{t.Eugenia} below) generalizing
Theorem \ref{t.Bach04}, as we shall see 
in \S\,\ref{4conjectures}. 

A lot of numerical work on large gaps has
been done (see the website \cite{Nicely}). 
This can be used to infer that the
inequality 
\eqref{eq:primeinequality} holds
whenever $127\le p_n\le 2\cdot 2^{63}\approx 1.8\cdot 10^{19}$; see Visser \cite{Visser}.
This in combination with Theorem 
\ref{thm:main1} leads to the following proposition.
\begin{Prop}
Every integer up to $9\cdot 10^{18}$ occurs as the height
of some ternary cyclotomic polynomial.
\end{Prop}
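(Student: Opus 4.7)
The plan is to deduce the proposition as an immediate corollary of Theorem~\ref{thm:main1}, by choosing $h = 9\cdot 10^{18}$ and verifying that the hypothesis of that theorem holds for this value of $h$. Specifically, the containment $\mathbb{N}_h\subseteq \mathcal{A}_t$ that we want is exactly the first inclusion in Theorem~\ref{thm:main1}, so the only thing left to do is to check the prime-gap hypothesis on the range $127\le p_n<2h = 1.8\cdot 10^{19}$.

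First I would set $h=9\cdot 10^{18}$, so that $2h\approx 1.8\cdot 10^{19} = 2\cdot 2^{63}$. Next I would invoke the tabulation of large prime gaps by Nicely \cite{Nicely} in the form used by Visser \cite{Visser}, according to which every pair of consecutive primes $p_n<p_{n+1}$ with $127\le p_n\le 2\cdot 2^{63}$ satisfies
\begin{equation*}
p_{n+1}-p_n<\sqrt{p_n}+1.
\end{equation*}
In other words, Conjecture~\ref{Pi.Andr} has been verified numerically on the entire interval $[127,\,2h]$. Once this is in place, the hypothesis of Theorem~\ref{thm:main1} holds for our choice of $h$, so the theorem yields $\mathbb{N}_h\subseteq \mathcal{A}_t$, which is the claim of the proposition.

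The main obstacle is not conceptual but bookkeeping: one has to be sure that the numerical verification covers the full range $p_n<2h$ rather than $p_n\le h$, since Theorem~\ref{thm:main1} is stated with the former (slightly stronger) hypothesis. Taking $h=9\cdot 10^{18}$ rather than the naive $h = 1.8\cdot 10^{19}$ is precisely what accounts for this factor of two, and once this is done the conclusion is immediate. No further case analysis or cyclotomic computation is required beyond what is already encapsulated in Theorem~\ref{thm:main1} and the earlier work of Moree--Ro\c{s}u underlying it.
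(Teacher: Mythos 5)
Your proposal is correct and matches the paper's argument exactly: the paper also takes $h = 9\cdot 10^{18}$, invokes the Nicely/Visser numerical verification that \eqref{eq:primeinequality} holds for $127\le p_n\le 2\cdot 2^{63}\approx 1.8\cdot10^{19}$, and applies Theorem~\ref{thm:main1}. You also correctly identify the factor-of-two bookkeeping ($2h$ versus $h$) as the reason the bound is stated as $9\cdot 10^{18}$.
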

The following theorem is the main result of our paper. Its proof rests on combining  
Lemma \ref{l.pi1}b, the key lemma used to prove
 Theorem  \ref{thm:main1}, with 
deep work 
 by Heath-Brown \cite{primegap} and Yu \cite{MR1374401} on gaps between primes.
\begin{Thm}
\label
{thm:main2}
Almost all positive integers occur
as the height of an optimal ternary cyclotomic polynomial. Specifically, for any fixed $\epsilon>0$,
the number of positive 
integers $\le x$ that do not occur
as a height of an optimal ternary cyclotomic polynomial is $\ll_{\epsilon} x^{3/5+\epsilon}.$ Under the Lindel\"of Hypothesis this number
is $\ll_{\epsilon} x^{1/2+\epsilon}.$
\end{Thm}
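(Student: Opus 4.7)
The plan is to quantify how often the Andrica-type inequality of Conjecture~\ref{Pi.Andr} can fail, and then use Lemma~\ref{l.pi1}b to bound the number of heights that could be missing from $\mathcal A_{opt}$ on account of each such failure. First I would unpack Lemma~\ref{l.pi1}b as attaching to each odd prime $p$ an interval of integers $I_p\subseteq\mathcal A_{opt}$ of length $\asymp\sqrt p$ whose right endpoint lies near $p/2$; this is the mechanism by which Theorem~\ref{thm:main1} is obtained in~\S\,\ref{4conjectures}. When $d_n:=p_{n+1}-p_n<\sqrt{p_n}+1$, consecutive intervals $I_{p_n}$ and $I_{p_{n+1}}$ overlap, so no integer of size $\asymp p_n$ can be missing from $\mathcal A_{opt}$. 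Conversely, any $m\le x$ absent from $\mathcal A_{opt}$ must lie in a gap between $I_{p_n}$ and $I_{p_{n+1}}$ for some $n$ with $p_n\le 2x+O(1)$ and $d_n>\sqrt{p_n}+1$, and the number of such $m$ for a given $n$ is at most $d_n-\sqrt{p_n}-1$. This yields the fundamental reduction
\[
\#\bigl(\mathbb N_x\setminus\mathcal A_{opt}\bigr)\;\ll\;\sum_{\substack{p_n\le 2x+O(1)\\ d_n>\sqrt{p_n}+1}}(d_n-\sqrt{p_n})\;\le\;\sum_{\substack{p_n\le 2x+O(1)\\ d_n>\sqrt{p_n}}}d_n.
\]

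Next I would feed in the deep moment estimates of Heath-Brown~\cite{primegap} and Yu~\cite{MR1374401} for prime gaps. Via H\"older's inequality or a dyadic decomposition over the size of $d_n$, any $\ell^k$-type bound $\sum_{p_n\le y}d_n^k\ll_\epsilon y^{\theta_k+\epsilon}$ yields
\[
\sum_{\substack{p_n\le y\\ d_n>\sqrt{p_n}}}d_n\;\ll_\epsilon\; y^{\theta_k-(k-1)/2+\epsilon},
\]
and the Heath-Brown--Yu range of admissible $(k,\theta_k)$ is sharp enough to produce the exponent $3/5+\epsilon$ unconditionally. Under the Lindel\"of Hypothesis, standard zero-density estimates for Dirichlet $L$-functions upgrade the moment bounds so that $\sum_{p_n\le y}d_n^k\ll_\epsilon y^{1+\epsilon}$ for $k$ in a suitable range, and the same H\"older/dyadic argument then sharpens the exponent to $1/2+\epsilon$.

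The main obstacle is the bookkeeping in this second step: one must select the right moment $k$ at which to apply the Heath-Brown and Yu bounds and handle the truncation $d_n>\sqrt{p_n}$ cleanly (most likely by dyadically splitting the range of $d_n$ rather than through a single application of H\"older), so as to recover precisely the exponents $3/5$ and $1/2$. The geometric reduction in the first step, by contrast, is essentially a repackaging of the proof of Theorem~\ref{thm:main1}: once the intervals $I_p$ from Lemma~\ref{l.pi1}b are in hand, translating a missing height into a sufficiently large prime gap is routine.
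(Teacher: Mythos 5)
Your proposal follows essentially the same route as the paper. The geometric reduction you describe (translating missing heights into contributions $d_n-\sqrt{p_n}$ from prime gaps with $d_n>\sqrt{p_n}$, over $p_n\le 2x+O(1)$) is exactly the content of Lemma~\ref{lem:Ex}, which bounds the exceptional count by $E(2x)/2+O(1)$ via Lemma~\ref{l.pi1}b, and the handling of the Lindel\"of case by a dyadic decomposition applied to Yu's second-moment bound $\sum_{p_n\le x}d_n^2\ll_\epsilon x^{1+\epsilon}$ is precisely what the paper does in Lemma~\ref{lem:rtyuhb23}. The one inessential deviation is in the unconditional case: you frame Heath-Brown's input as an $\ell^k$-moment bound to be fed through H\"older or a dyadic split, whereas Heath-Brown's theorem in~\cite{primegap} already gives the restricted sum $\sum_{p_n\le x,\, d_n\ge\sqrt{p_n}}d_n\ll_\epsilon x^{3/5+\epsilon}$ in exactly the required form (Lemma~\ref{t.HB}), so no further bookkeeping is needed there; the moment-to-restricted-sum passage is only required in the Lindel\"of branch, via Yu's estimate.
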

\noindent (Readers unfamiliar with the Lindel\"of Hypothesis 
are referred to the paragraph $\S\, 3$
before the statement of 
Lemma \ref{thm:yu2}.)
\par In addition to 
Conjecture 
\ref{Pi.Andr}, there are two further prime number conjectures of relevance for the topic at hand:
Conjecture 5, that we have
not come across in the literature, and Andrica's conjecture (Conjecture \ref{Andr}).
\begin{Con}
\label
{Pi.2tuplet.1}
Let $h>1$ be odd. There exists a prime
$p\ge 2h-1,$ such that $1+(h-1)p$
is a prime too.
\end{Con}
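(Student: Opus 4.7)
The statement is a conjecture about the simultaneous primality of $p$ and $L(p):=1+(h-1)p$ for odd $h>1$, and it sits squarely in the class of prime-production problems beyond current technology; my plan accordingly amounts to isolating the heuristic prediction and pinpointing where the genuine obstruction lies. First I would check the \emph{admissibility} of the pair $\{X,L(X)\}$ in the sense of Hardy--Littlewood and Bateman--Horn. Modulo $2$, since $h-1$ is even, $L(X)\equiv 1\pmod{2}$ for every $X$, so $L(p)$ is automatically odd. For any odd prime $q$, the pair is forbidden by $q$ only at $X\equiv 0\pmod{q}$ and, if $q\nmid h-1$, at $X\equiv -(h-1)^{-1}\pmod{q}$, so at most two residue classes out of $q$ are lost. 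Hence the singular series
\begin{equation*}
C_h \;=\; \prod_{q\text{ prime}} \frac{1-\omega_h(q)/q}{(1-1/q)^{2}},
\end{equation*}
with $\omega_h(q)$ the number of forbidden classes, converges to a strictly positive constant, and the Bateman--Horn conjecture predicts
\begin{equation*}
\#\{p\le x: p \text{ and } L(p) \text{ both prime}\} \;\sim\; C_h\,\frac{x}{(\log x)^{2}},
\end{equation*}
which would yield abundantly many admissible $p\ge 2h-1$.

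Next I would try to convert this heuristic into an unconditional lower bound. A weighted linear sieve in the spirit of Chen's theorem applied to $(h-1)p+1$ yields infinitely many primes $p$ with $L(p)$ a product of at most two primes, which is the best unconditional approximation currently available. One could attempt to refine this using the Bombieri--Friedlander--Iwaniec machinery on primes in arithmetic progressions beyond the square-root barrier (exactly the tool deployed elsewhere in this paper), but for the binary shape $\{X,(h-1)X+1\}$ no upgrade to the honest two-prime statement is presently known; the Maynard--Tao bounded-gaps apparatus does not help either, since the linear form $(h-1)X+1$ is not a translate of $X$.

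The main obstacle is already laid bare in the smallest case $h=3$, where $L(p)=2p+1$, so that Conjecture~\ref{Pi.2tuplet.1} specializes to the existence of infinitely many Sophie Germain primes, a classical open problem. Consequently I do not expect a genuine proof of Conjecture~\ref{Pi.2tuplet.1} to be within reach of present methods, and the realistic output of the plan is twofold: (i) a reduction of the statement to an appropriate case of Dickson's or Bateman--Horn's conjecture, making any downstream use of the conjecture rest on standard tuple hypotheses; and (ii) numerical verification for $h$ up to a large bound, in direct analogy with the treatment of Conjecture~\ref{Pi.Andr} earlier in the paper.
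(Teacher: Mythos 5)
The statement you were asked about is \emph{Conjecture}~\ref{Pi.2tuplet.1}, not a theorem; the paper does not prove it and only observes that it is a weak consequence of the Bateman--Horn conjecture. Your recognition that a genuine proof is out of reach, together with your reduction of the statement to Bateman--Horn and your remarks on what sieve theory can and cannot deliver (Chen-type $P_2$ results, the inapplicability of Maynard--Tao to a dilated form), is exactly the right assessment and matches the paper's own treatment.

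One small inaccuracy worth flagging: you write that for $h=3$ the conjecture \emph{specializes to the existence of infinitely many Sophie Germain primes}. In fact, for each fixed odd $h$ the conjecture only demands a \emph{single} prime $p\ge 2h-1$ with $1+(h-1)p$ prime, so the $h=3$ case is a finite (and already verified) claim, not the infinitude of Sophie Germain primes. The genuine difficulty is the uniformity over all odd $h$: no method is known that yields, for every $h$, even one such $p$, and that is why the conjecture really does sit at the level of Dickson/Bateman--Horn. Your admissibility check of the pair $\{X,\,1+(h-1)X\}$ and the ensuing positivity of the singular series are correct, and your suggested fallback --- reduction to standard tuple hypotheses plus numerical verification in bounded ranges --- is precisely how the paper positions this conjecture alongside Conjecture~\ref{Pi.Andr}.
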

The widely believed Bateman--Horn conjecture 
\cite{Al-ZomFukGarcia} implies that given an odd $h>1,$ there
are infinitely many primes $p$ such that $1+(h-1)p$ is a prime too, and thus 
Conjecture \ref{Pi.2tuplet.1} is a weaker version of this.

\begin{Thm}
\label
{thm:main3}
If Conjecture \ref{Pi.2tuplet.1} holds true, then
${\cal A}_t$ contains all odd natural numbers. Unconditionally ${\cal A}_t$ contains a positive fraction of 
all odd natural numbers.
\end{Thm}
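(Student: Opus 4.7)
The plan is to combine Conjecture~\ref{Pi.2tuplet.1} with the Moree--Ro\c su extension (Theorem~\ref{t.Eugenia}) of Bachman's Theorem~\ref{t.Bach04}. For a given odd $h\ge 3$, the conjecture supplies a prime pair $p\ge 2h-1$, $q=1+(h-1)p$, which I would feed into the Moree--Ro\c su construction to produce a ternary cyclotomic polynomial $\Phi_{pqr}$ of height exactly $h$. The unconditional half then reduces to showing that such a prime pair exists for a positive fraction of odd $h$.

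For the conditional half, fix odd $h\ge 3$ and let $p,q$ be as supplied by Conjecture~\ref{Pi.2tuplet.1}. The inequality $p\ge 2h-1$ is equivalent to $h\le (p+1)/2$, placing $h$ within the maximal range of heights achievable by ternary cyclotomic polynomials with smallest prime divisor $p$ (compare Theorem~\ref{t.Bach04}). The identity $q=1+(h-1)p$ encodes the ``jump parameter'' $(q-1)/p=h-1$, which is what the Moree--Ro\c su construction requires in order to output the value $h$ rather than a smaller integer in $\{1,\dots,(p+1)/2\}$. I would then use Dirichlet's theorem on primes in arithmetic progressions to select a prime $r>q$ in the residue class modulo $pq$ prescribed by Theorem~\ref{t.Eugenia}, so that $A(pqr)=h$.

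For the unconditional half, it is enough (by the previous paragraph) to prove that the set
\[
H(X):=\bigl\{h\le X:h\text{ odd},\ \exists\text{ primes }p\ge 2h-1\text{ and }q=1+(h-1)p\bigr\}
\]
satisfies $|H(X)|\gg X$. Parametrising by the larger prime $q=1+(h-1)p$, the problem becomes that of counting primes $q\le Y$ (with $Y\asymp X^2$) that admit a prime divisor $p$ of $q-1$ with $p\ge \sqrt{2(q-1)}$. Each such pair yields an odd value $h=1+(q-1)/p$, and a dyadic restriction $p\in[P,2P]$ keeps the map $(p,q)\mapsto h$ injective up to a logarithmic factor. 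Classical estimates on primes $q$ whose predecessor $q-1$ has a prime factor of size $\gg\sqrt{q}$ (obtainable via the Bombieri--Vinogradov theorem applied to the progressions $q\equiv 1\pmod p$ with $p\le q^{1/2-\epsilon}$) then produce $\gg X$ distinct values in $H(X)$.

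The principal obstacle is the conditional step: one must verify that the residue class for $r$ modulo $pq$ chosen via Theorem~\ref{t.Eugenia} truly yields height $h$ and not a smaller value within the admissible range. The specific shape $q=1+(h-1)p$ is precisely what forces the construction into the correct slot of the Moree--Ro\c su family, but making this rigorous requires a careful reading of the combinatorial formulas in that theorem. A secondary, purely analytic difficulty arises in the unconditional half, where one must ensure that the positive-density count is not spoiled by fibres of $(p,q)\mapsto h$ that are too large; the dyadic restriction on $p$ resolves this cleanly.
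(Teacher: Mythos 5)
Both halves of your proposal contain genuine gaps, and each one touches the heart of why the paper's argument takes the route it does.

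\textbf{Conditional half.} You propose to feed the prime pair $p\ge 2h-1$, $q=1+(h-1)p$ from Conjecture~\ref{Pi.2tuplet.1} into the Moree--Ro\c su construction (Theorem~\ref{t.Eugenia}). But Theorem~\ref{t.Eugenia} produces \emph{optimal} ternary polynomials whose height is $(p+1)/2+m$ with $m\ge 0$, so every height it yields is at least $(p+1)/2$. Under Conjecture~\ref{Pi.2tuplet.1} you have $p\ge 2h-1$, i.e.\ $h\le (p+1)/2$, which means Moree--Ro\c su can only hit $h$ in the degenerate boundary case $p=2h-1$ and $m=0$, a case the conjecture does not guarantee. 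The paper's actual construction (Lemma~\ref{withWilms}) instead invokes the Gallot--Moree--Wilms identity $M(p;q)=\min\{(q-1)/p+1,\,(p+1)/2\}$ for $q\equiv 1\pmod p$; the relation $q=1+(h-1)p$ makes $(q-1)/p+1=h$, and $p\ge 2h-1$ ensures $(p+1)/2\ge h$, so $M(p;q)=h$ exactly. Lemma~24 of \cite{GMW} then supplies a prime $r$ with $A(pqr)\ge h$, giving $A(pqr)=h$. These are generally \emph{non-optimal} ternary polynomials (the paper notes this explicitly), so no amount of ``careful reading of the combinatorial formulas'' in Theorem~\ref{t.Eugenia} will close the gap you flagged; you need a different source theorem.

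\textbf{Unconditional half.} You parametrise by the large prime $q\approx X^2$ and propose to count primes $q$ with a prime factor $p\mid q-1$ of size $p\ge\sqrt{2(q-1)}$ via ``the Bombieri--Vinogradov theorem applied to the progressions $q\equiv 1\pmod p$ with $p\le q^{1/2-\epsilon}$.'' This is internally inconsistent: the constraint $p\ge 2h-1$ forces $p>\sqrt{2(q-1)}>\sqrt{q}$, whereas Bombieri--Vinogradov only controls moduli up to $q^{1/2}/(\log q)^A$. The range you need is exactly beyond the square-root barrier, which is the whole point of Remark~\ref{rem:1/2bombfriediwa} in the paper. The published proof handles this by combining a Cauchy--Schwarz (large-sieve) lower-bound argument (Lemma~\ref{lem:dio}) with a Selberg-sieve second-moment bound (Lemma~\ref{lem:classical sieve}), and for the first-moment lower bound it uses the Bombieri--Friedlander--Iwaniec theorem (Lemma~\ref{thm:bomb_frid_iwa}), which is specifically designed to give an average estimate for $\psi(t;q,1)$ with moduli $q$ in a window around $\sqrt{t}$. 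Without such a result beyond-square-root input, your positive-density claim does not go through.

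Your structural plan (conditional construction plus a positive-density argument for the prime pair) is aligned with the paper, but both the construction and the analytic input need to be replaced as above.
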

The first assertion is a consequence of work of 
Gallot, Moree and Wilms \cite{GMW} and involves ternary
cyclotomic polynomials that are not optimal. The second makes use
of deep work of Bombieri, Friedlander and Iwaniec \cite{MR891581} 
on the level of distribution 
of primes in  arithmetic progressions with
 fixed residue and varying moduli.
The level of distribution that is needed here goes beyond the square root barrier (that is studied in the Bombieri--Vinogradov theorem, for example)
and this is due to the condition $p\ge 2h-1$ in Conjecture~\ref{Pi.2tuplet.1}; see Remark~\ref{rem:1/2bombfriediwa} for more details.
As far as we know, this is the first time 
that this kind of level of distribution is used in the subject of cyclotomic coefficients (see \S\,\ref{s:momentstwo} for the details). We would like to point out though that
Fouvry~\cite{Fouvry}
has used the classical Bombieri--Vinogradov theorem in a rather different way and context, namely, for studying the number of 
nonzero coefficients of cyclotomic polynomials $\Phi_n$ with $n$ having two distinct prime factors.

In the final section we consider cyclotomic polynomials with prescribed maximum or minimum
coefficient. 
We will prove the following result.
\begin{Thm}
\label{thm:Andrica}
Andrica's conjecture 
implies that every natural number occurs
as the maximum coefficient of some cyclotomic polynomial.
\end{Thm}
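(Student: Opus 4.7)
The key observation is that Andrica's conjecture, in the form $\sqrt{p_{n+1}}-\sqrt{p_n}<1$, implies Legendre's conjecture: every interval $[k^2,(k+1)^2)$ contains a prime. Indeed, were such an interval prime-free, the consecutive primes $p_n\le k^2-1$ and $p_{n+1}\ge (k+1)^2$ surrounding it would give $\sqrt{p_{n+1}}-\sqrt{p_n}\ge (k+1)-\sqrt{k^2-1}>1$, contradicting Andrica. So under Andrica every positive integer $N$ lies within $O(\sqrt{N})$ of some prime.

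Bachman's theorem (Theorem \ref{t.Bach04}) provides, for each odd prime $p$, a ternary $\Phi_{pqr}$ with maximum coefficient $(p+1)/2$ and another with maximum coefficient $(p-1)/2$, already realizing every natural number $m$ for which $2m-1$ or $2m+1$ is prime. For the remaining $m$ (those with both $2m-1$ and $2m+1$ composite), I would invoke the Moree--Ro\c su extension of Bachman (Theorem \ref{t.Eugenia}), which parameterizes, for each odd prime $p$, a broader range of values admissible as the maximum coefficient of some ternary $\Phi_{pqr}$, rather than just the two Bachman values $(p\pm 1)/2$. The Legendre-density supplied by Andrica then furnishes, for any missed $m$, a prime $p$ in an interval of length $O(\sqrt{m})$ around $2m$ for which the admissible range includes $m$.

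The main obstacle is matching the admissible ranges from the flexible ternary construction precisely to Andrica's quantitative prime-gap bound. I expect a direct adaptation of the proof of Theorem \ref{thm:main1}---which uses the stronger Conjecture \ref{Pi.Andr} to cover every natural number as an optimal-ternary height---to go through under the weaker Andrica bound once the constraint is relaxed: pinning down one endpoint of the consecutive-integer interval $A\{pqr\}$, namely the maximum coefficient, leaves roughly twice as much flexibility as pinning down its maximum absolute value, namely the height. The allowed prime window thereby doubles, accounting for the factor-of-two gap between Conjecture \ref{Pi.Andr} ($p_{n+1}-p_n<\sqrt{p_n}+1$) and Andrica ($p_{n+1}-p_n<2\sqrt{p_n}+1$). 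The small cases of $m$ would be checked by hand, just as the values $1,5$ were excluded in Theorem \ref{thm:main1}, and then $m=1$ handled trivially by $\Phi_2(x)=x+1$.
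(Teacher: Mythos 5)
Your high-level picture matches the paper's -- use the Moree--Ro\c su theorem to generate a band of admissible values near each prime and bridge the gaps between consecutive primes with Andrica's bound, the factor of two over Conjecture \ref{Pi.Andr} being exactly what Andrica supplies -- and the Legendre observation, though true, is a red herring that the argument never uses.

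However, there is a real gap in your explanation of where that factor of two comes from. You assert that Theorem \ref{t.Eugenia} ``parameterizes, for each odd prime $p$, a broader range of values admissible as the maximum coefficient \dots rather than just the two Bachman values $(p\pm 1)/2$.'' That is not what the theorem gives: for $m\ge 1$ it produces ternary $\Phi_{pqr}$ with $A\{pqr\}=[-(p-1)/2+m,(p+1)/2+m]$, whose maximum coefficient is $(p+1)/2+m$. So from a single prime $p$, Theorem \ref{t.Eugenia} alone only reaches maxima going \emph{upward} from $(p+1)/2$ by about $\sqrt{p}/2$ -- precisely the same band available when prescribing the height, since for this interval the height and the maximum coincide. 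With only this, bridging $[(p_n+1)/2,(p_{n+1}-1)/2]$ requires $d_n \lesssim \sqrt{p_n}$, i.e.\ the stronger Conjecture \ref{Pi.Andr}; Andrica's $d_n<\sqrt{p_n}+\sqrt{p_{n+1}}$ is not enough and your proof does not close. The missing ingredient is Proposition \ref{prop:Kaplan} (Kaplan): taking $s\equiv -r\,({\rm mod}\,pq)$ yields $A\{pqs\}=-A\{pqr\}=[-(p+1)/2-m,(p-1)/2-m]$, whose maximum is $(p-1)/2-m$, so one can also go \emph{downward} from $(p-1)/2$ by about $\sqrt{p}/2$. This reflection is what the paper packages into $\mathcal R^{\pm}$ and Lemma \ref{l:Aopt+-}, and it is what makes the Andrica-sized gap bridgeable in Lemma \ref{l.pi1+-}. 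Your heuristic that ``pinning down one endpoint leaves twice the flexibility as pinning down the maximum absolute value'' therefore mislocates the source of the doubling: it comes entirely from the sign flip $r\mapsto -r\,({\rm mod}\,pq)$, not from relaxing which statistic of the interval is prescribed.
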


That prime numbers play such an important role in our approach is a consequence of working with ternary
cyclotomic polynomials. One would want to work with $\Phi_n$ with $n$ having at least four prime
factors, however this leads to a loss of control over the behaviour of the coefficients in general
and the maximum in particular.

\section{More on ternary cyclotomic polynomials}
\label{4conjectures}
Given any $m\ge 1,$ Moree and Ro\c su \cite{Eugenia} 
constructed infinite families of ternary optimal $\Phi_{pqr}$ such
that $A(pqr)=(p+1)/2+m,$ provided that $p$ 
is large enough in terms of $m.$ This result, Theorem \ref{t.Eugenia} below,
allows one to show that for $p\ge 11$ there are cyclotomic polynomials having heights
$(p+1)/2+1,\ldots,(p+1)/2+k$, with $k$ an integer close to $\sqrt{p}/2$. If the
gaps between consecutive primes are always small enough, these heights cover all integers 
large enough and 
this would allow one to
prove Conjecture \ref{een}. If large prime gaps do occur, then we 
are led to study the total length of prime gaps large enough up to
$x$ (cf.\ $E(x)$ in Lemma \ref{lem:Ex}). Conveniently 
for us a good upper bound for this was recently obtained
by Heath-Brown \cite{primegap}; see Lemma \ref{t.HB}.\\
\indent The remainder of this section is devoted to deriving consequences of Theorem 
\ref{t.Eugenia} and proving Theorem \ref{thm:main1}.
\begin{Thm} {\rm (Moree and Ro\c su \cite[Theorem 1.1]{Eugenia})}.
\label{t.Eugenia}
Let 
$p \geq 4m^2+2m+3$ be a prime, with $m\ge 1$ any 
integer. Then there exists an infinite sequence of prime pairs $\{(q_j , r_j )\}_{j=1}^\infty$ 
with $q_j<q_{j+1}, pq_j < r_j$,
such that
$$
A\{pq_j r_j\}=\left\{-\frac{(p-1)}{2}+ m,\ldots,\frac{p + 1}{2} + m\right\}.
$$
\end{Thm}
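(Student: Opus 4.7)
The natural strategy is to refine Bachman's proof of Theorem \ref{t.Bach04}, which is essentially the case $m=0$. The idea is to select the primes $q$ and $r$ in carefully prescribed arithmetic progressions modulo $p$, so that the coefficients $a_{pqr}(k)$ admit an explicit combinatorial description, and then shift the residue choices in a controlled way to move the coefficient interval by $+m$.

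First, I would write each $a_{pqr}(k)$ using a Beiter/Kaplan-style formula. Setting $q^{*}, r^{*} \in \{1, \dots, p-1\}$ to be the inverses of $q$ and $r$ modulo $p$, there is a classical expression of the coefficients as a difference of two indicator-style counts that depend only on $k \bmod pq$ and on the pair $(q^{*}, r^{*})$. Concretely, the columns of the associated ``Lam--Leung matrix'' for $\Phi_{pq}$ are broken, by the multiplication by $x^{r}$ implicit in going from $\Phi_{pq}$ to $\Phi_{pqr}$, into blocks whose lengths are linear functions of $q^{*}, r^{*}$. The range of the coefficients is completely determined by these block lengths.

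Next, for a given $m \ge 1$, I would choose $q^{*}$ and $r^{*}$ so that the two relevant block lengths become $(p-1)/2 - m$ and $(p+1)/2 + m$ (or a translation thereof). The hypothesis $p \ge 4m^{2} + 2m + 3$ is precisely what is needed to ensure that these block lengths are strictly positive integers in the admissible range and that the resulting interval of coefficients runs from $-(p-1)/2 + m$ to $(p+1)/2 + m$ without overshoot. With $q^{*}, r^{*}$ pinned down, Dirichlet's theorem on primes in arithmetic progressions provides infinitely many primes $q$ with the prescribed residue class modulo $p$; for each such $q$, a further application of Dirichlet (modulo $pq$) produces infinitely many primes $r > pq$ with the prescribed residue class modulo $p$. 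Ordering by size yields the promised sequence $\{(q_{j}, r_{j})\}_{j \ge 1}$.

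The main obstacle will be verifying that the combinatorial description not only produces coefficients contained in the claimed interval but in fact realizes every integer in it. This requires an explicit case analysis on $k \bmod p$ combined with the fact, recalled just before Definition \ref{def:optimal}, that consecutive coefficients of a ternary $\Phi_{n}$ differ by at most $1$; once the extreme values of $a_{pqr}(k)$ are pinned to $-(p-1)/2 + m$ and $(p+1)/2 + m$, the full interval is automatically attained. The delicate quantitative step is showing both endpoints are realized while no coefficient escapes the interval, and it is here that the arithmetic threshold $p \ge 4m^{2} + 2m + 3$ enters essentially.
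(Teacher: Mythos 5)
Note first that the paper you are reviewing does not prove Theorem~\ref{t.Eugenia}: the statement is explicitly attributed to Moree and Ro\c su~\cite[Theorem 1.1]{Eugenia} and is used as an external ingredient. There is therefore no ``paper's own proof'' to compare against; any comparison would have to be with the cited source, whose argument is not reproduced here.

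That said, your outline does track the broad strategy one would expect from that source: fix the residues of $q$ and $r$ (equivalently, of the inverses $q^*$, $r^*$) modulo $p$, use a Kaplan-style combinatorial formula for $a_{pqr}(k)$ in which the coefficient range is governed by these residues, invoke the Gallot--Moree property that consecutive ternary coefficients differ by at most $1$ to fill the interval once the two endpoints are attained, and manufacture infinitely many $(q_j,r_j)$ by Dirichlet's theorem --- first for $q$ in the prescribed class mod $p$, then for $r$ in the prescribed class mod $pq_j$ (the latter is what determines $A\{pq_jr_j\}$ by Proposition~\ref{prop:Kaplan}, so your parenthetical ``modulo $pq$'' is the correct modulus, not ``modulo $p$'').

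There is, however, a concrete gap in the part you lean on most. You assert that the hypothesis $p \geq 4m^2 + 2m + 3$ is ``precisely what is needed to ensure that these block lengths are strictly positive integers in the admissible range,'' with the block lengths claimed to be $(p-1)/2 - m$ and $(p+1)/2 + m$. But positivity of $(p-1)/2 - m$ only requires $p \geq 2m+3$, a bound linear in $m$; it cannot produce a quadratic threshold such as $4m^2+2m+3$. The quadratic condition must arise from a subtler constraint in the combinatorial analysis (roughly, from requiring a residue of size $\asymp m$ to be compatible with a counting inequality whose error is again of size $\asymp m$, forcing something like $m^2 \ll p$), and your sketch does not identify or derive it. Relatedly, the step you describe as ``the delicate quantitative step'' --- proving that both endpoints $-(p-1)/2+m$ and $(p+1)/2+m$ are realized while no coefficient overshoots --- is exactly where the substance of the Moree--Ro\c su argument lives, and it is deferred in your plan to an unspecified ``explicit case analysis.'' As written the proposal is a credible road map, not a proof: the residue choices are not pinned down, the coefficient formula is not stated, and the threshold $4m^2+2m+3$ is asserted rather than obtained.
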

\noindent We note that the two families of 
Theorem \ref{t.Bach04} are also infinite in the sense of this theorem. Thus Theorem \ref{t.Eugenia} also holds for $m=0$.\\
\indent Put
\begin{equation}
\label{R}
\mathcal R=\Big\{\frac{p+1}{2}+m: p{\rm ~is~a~prime}, \,m\ge 0,\, 4m^2+2m+3\le p\Big\}.
\end{equation}

\begin{Lem}
\label{l:Aopt}
We have $\mathcal R\subseteq \mathcal A_{opt}.$
\end{Lem}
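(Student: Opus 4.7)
The plan is to deduce the inclusion directly from Theorem \ref{t.Eugenia}, essentially by unpacking what the theorem says about the symmetric range of coefficients realized by the family $\{\Phi_{pq_jr_j}\}$.

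Fix an element $h \in \mathcal R$, so $h = (p+1)/2 + m$ for some prime $p$ and integer $m \geq 0$ with $p \geq 4m^2 + 2m + 3$. Applying Theorem \ref{t.Eugenia} (and noting the remark after its statement that it also covers $m = 0$ via Theorem \ref{t.Bach04}), I obtain an infinite family of ternary integers $n_j = pq_jr_j$ with
$$
A\{n_j\} = \left\{-\frac{p-1}{2} + m,\ -\frac{p-3}{2} + m,\ \ldots,\ \frac{p+1}{2} + m\right\}.
$$
Two verifications remain. First, the cardinality of this set of consecutive integers is
$$
\Big(\tfrac{p+1}{2} + m\Big) - \Big(-\tfrac{p-1}{2} + m\Big) + 1 = p+1,
$$
so $\# A\{n_j\} = p+1$ and hence each $n_j$ is optimal in the sense of Definition \ref{def:optimal}. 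Second, I need to identify the maximum absolute value of an element of $A\{n_j\}$ as being $h$. The hypothesis $p \geq 4m^2 + 2m + 3$ implies in particular $p \geq 2m + 3$, so $(p-1)/2 - m \geq 1 > 0$. Consequently the smallest element of $A\{n_j\}$ is negative with absolute value $(p-1)/2 - m$, while the largest element is the positive integer $(p+1)/2 + m$. A direct comparison gives
$$
\Big(\tfrac{p+1}{2} + m\Big) - \Big(\tfrac{p-1}{2} - m\Big) = 2m + 1 > 0,
$$
so the maximum absolute value is attained at the upper endpoint, giving $A(n_j) = (p+1)/2 + m = h$.

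Combining these two facts, $h = A(n_j)$ with $n_j$ optimal, which means $h \in \mathcal A_{opt}$. Since $h \in \mathcal R$ was arbitrary, this proves $\mathcal R \subseteq \mathcal A_{opt}$. There is no real obstacle here: the lemma is a bookkeeping exercise on top of Theorem \ref{t.Eugenia}, with the only mild point being to ensure the absolute-value comparison at the negative endpoint comes out in favor of the positive endpoint, which is automatic from the lower bound on $p$.
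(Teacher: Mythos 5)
Your proposal is correct and takes essentially the same approach as the paper: the paper's proof is a one-line citation of Theorems \ref{t.Bach04} (for $m=0$) and \ref{t.Eugenia} (for $m\ge1$), and your argument simply makes explicit the two small verifications — that $\#A\{pq_jr_j\}=p+1$ so $pq_jr_j$ is optimal, and that the maximum absolute value is attained at $(p+1)/2+m$ — that the paper leaves implicit.
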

\begin{proof}
For the elements of $\mathcal R$ with $m=0$ this follows from Theorem \ref{t.Bach04}, for 
those with $m\ge 1$ it
follows from Theorem \ref{t.Eugenia}.
\end{proof}

\begin{Lem}
\label{l:upto2h}
If $p_{n+1}-p_n< \sqrt{p_n}+1$ holds for $127\le p_n<2h$ with $h$ an
integer, then we have
$\mathbb N_h\backslash \{1,5,63\}\subseteq \mathcal R.$
\end{Lem}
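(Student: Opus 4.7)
The plan is to reformulate membership in $\mathcal R$ as a question about gaps between primes and then to exploit the hypothesis via a sharp parity argument. First, observe that $h'\in\mathcal R$ if and only if there exist a prime $p$ and an integer $m\ge 0$ with $p=2h'-1-2m$ and $4m^2+2m+3\le p$. Writing $d:=2m$ (necessarily even, since $p$ and $2h'-1$ are both odd), the second inequality rearranges to the clean form $(d+1)^2\le 2h'-3$. The strategy is to take $p$ to be the \emph{largest} prime $\le 2h'-1$, so that $d=2h'-1-p$ is as small as possible, and then verify this inequality.

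For $h'\ge 64$ one has $2h'-1\ge 127$, so $p\ge 127$ and the hypothesis applies. Let $p'$ denote the next prime after $p$; maximality forces $p'>2h'-1$, and since $p'$ is an odd prime one has $p'\ne 2h'$, hence $p'\ge 2h'+1$. Thus $p'-p\ge d+2$, and combining with the hypothesis $p'-p<\sqrt p+1$ yields $d+2<\sqrt p+1$, equivalently $(d+1)^2<p$. The crucial step is now a parity trick: since $d$ is even, $(d+1)^2$ is an odd integer, and $p$ is also odd, so the strict inequality $(d+1)^2<p$ actually forces $(d+1)^2\le p-2\le 2h'-3$. This delivers $h'\in\mathcal R$.

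For the remaining range $h'\le 63$ with $h'\notin\{1,5,63\}$, one concludes by a direct verification, exhibiting a concrete pair $(p,m)$ for each such $h'$; this is a finite tabulation, usually with $p$ the largest prime below $2h'-1$ and $m$ small. The main obstacle --- and the reason the hypothesis cannot be relaxed --- lies in the tightness of the step $(d+1)^2\le p-2$: without the parity observation one obtains only $(d+1)^2<p\le 2h'-1$, one unit short of what is needed. The exceptional case $h'=63$, where $2h'-1=125$ is composite and the nearest prime $113$ forces $m=6$ with $4m^2+2m+3=159>113$, demonstrates that the bound is sharp.
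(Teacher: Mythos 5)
Your proof is correct and rests on the same underlying mechanism as the paper's: the prime gap hypothesis, applied to the largest prime $p\le 2h'-1$, yields the strict inequality $(d+1)^2<p$, and an integrality observation --- your parity trick, exploiting that both $(d+1)^2$ and $p$ are odd --- sharpens this enough to deliver the required $4m^2+2m+3\le p$, playing the same role as the paper's remark ``since both numbers are integers'' in the proof of Lemma~\ref{l.pi1}a). The only real difference is organizational: you argue pointwise via the clean equivalent criterion $(d+1)^2\le 2h'-3$ with $p$ the largest prime $\le 2h'-1$, whereas the paper factors the argument through the interval statement of Lemma~\ref{l.pi1}a) (that $I_n\cap\mathbb N\subset\mathcal R$ whenever $d_n<\sqrt{p_n}+1$), a form it deliberately isolates because part~b) of that lemma is reused to bound $N(x)$ in Lemma~\ref{lem:Ex}.
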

The proof is a consequence of part a)
of the following lemma 
and the computational observation that $1,5$ and $63$ are
the only natural numbers $<64$ that are not in $\mathcal R.$

By $\lfloor r \rfloor$ we denote the entire part of 
 a real number $r$.
\begin{Lem} 
\label{l.pi1} Let $n\ge 5.$ \\
{\rm a)} If $p_{n+1}-p_n<\sqrt{p_n}+1,$ then
 $I_n\cap \mathbb N\subset \mathcal R$, where $I_n:=\textstyle[\frac{p_n+1}{2},
\frac{p_{n+1}-1}{2}]$.\\
{\rm b)} If $p_{n+1}-p_n\ge \sqrt{p_n}+1$, then there are at most
\begin{equation}
\label{eq:floor}
 \lfloor (p_{n+1}-p_n-\sqrt{p_n}+1)/2\rfloor
\end{equation} 
 integers in the interval $I_n$ that are not in $\mathcal R.$ 
\end{Lem}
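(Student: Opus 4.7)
The plan is to identify, for each prime $p$, the block of consecutive integers that $p$ contributes to $\mathcal R$ via the definition \eqref{R}, and then to show that the block contributed by $p_n$ covers essentially all of $I_n\cap\mathbb N$. First I would set $M(p):=\max\{m\in\mathbb Z_{\ge 0}:4m^2+2m+3\le p\}$, so that $p$ contributes to $\mathcal R$ exactly the block $\{(p+1)/2+m:0\le m\le M(p)\}$. Writing $L:=(p_{n+1}-p_n-2)/2$, which is a non-negative integer for $n\ge 2$, the set $I_n\cap\mathbb N$ is precisely $\{(p_n+1)/2+m:0\le m\le L\}$. Hence an upper bound for the number of integers in $I_n\cap\mathbb N$ that fail to lie in $\mathcal R$ is $\max(L-M(p_n),\,0)$.

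The crux is the lower bound $M(p_n)\ge(\sqrt{p_n}-3)/2$, which I would establish as follows. Set $m_0:=\lceil(\sqrt{p_n}-3)/2\rceil$; since $n\ge 5$ gives $p_n\ge 11$, we have $\sqrt{p_n}>3$ and hence $m_0\ge 1$. By definition $m_0<(\sqrt{p_n}-1)/2$, so $2m_0+1<\sqrt{p_n}$, and squaring yields $(2m_0+1)^2<p_n$; as both sides are integers, this improves to $(2m_0+1)^2\le p_n-1$. Rearranging,
$$4m_0^2+2m_0+3\le p_n+1-2m_0\le p_n-1,$$
so $m_0\le M(p_n)$, and in particular $M(p_n)\ge(\sqrt{p_n}-3)/2$.

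Both parts now follow. For part (a), the hypothesis $p_{n+1}-p_n<\sqrt{p_n}+1$ rewrites as $L<(\sqrt{p_n}-1)/2\le M(p_n)+1$, and since $L,M(p_n)\in\mathbb Z$ this forces $L\le M(p_n)$, so $I_n\cap\mathbb N\subset\mathcal R$. For part (b), the key estimate gives
$$L-M(p_n)\le L-\frac{\sqrt{p_n}-3}{2}=\frac{p_{n+1}-p_n-\sqrt{p_n}+1}{2},$$
and since $L-M(p_n)$ is an integer this yields the floor bound \eqref{eq:floor}; under the hypothesis of (b) the right-hand side is $\ge 1$, which also takes care of the case $L<M(p_n)$ where no uncovered integers exist.

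The only step requiring any care is the key estimate, which is really just elementary bookkeeping on the quadratic inequality $4m^2+2m+3\le p$; I do not anticipate any genuine obstacle. The hypothesis $n\ge 5$ enters exactly to guarantee $\sqrt{p_n}>3$, so that $m_0\ge 1$ and the sharpening $p_n+1-2m_0\le p_n-1$ is available.
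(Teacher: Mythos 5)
Your proof is correct and follows essentially the same route as the paper: both rest on the key inequality that there is an integer $m\ge(\sqrt{p_n}-3)/2$ with $4m^2+2m+3\le p_n$ (the paper evaluates $4z_n^2+2z_n+3=p_n-\sqrt{p_n}+3<p_n$ at $z_n=(\sqrt{p_n}-1)/2$ and takes the integer in $[z_n-1,z_n]$, which coincides with your $m_0$), and then both compare the resulting block of $\mathcal R$ against $I_n$ by the same counting. Your write-up is slightly more careful than the paper's in spelling out that the hypothesis of part (b) makes the floor $\ge 1$, so the bound also covers the case where the block already exhausts $I_n$.
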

\begin{proof}
The assumption on $n$ implies that $p_n\ge 11.$ Put $z_n=(\sqrt{p_{n}}-1)/2.$
Note that $4z_n^2+2z_n+3=p_n-\sqrt{p_n}+3<p_n.$ As 
$4x^2+2x+3$ is increasing for $x\ge 0,$
the inequality $4x^2+2x+3<p_n$ is satisfied for every real 
number $0\le x\le z_n.$ In particular it is satisfied for $x=m_n,$ with 
$m_n$ the unique integer in the interval $[z_n-1,z_n].$
Thus $m_n\ge (\sqrt{p_{n}}-3)/2$ and 
$4m_n^2+2m_n+3\leq p_n$. 
It follows that
$$
\Big[\frac{p_n+1}{2},\frac{p_n+1}{2}+m_n\Big]\cap \mathbb N\subseteq \mathcal R.
$$
As $(p_{n+1}+1)/2$ is clearly in $\mathcal R$, part a) 
follows if we can show that the final number $(p_n+1)/2+m_n$ is at least
$(p_{n+1}-1)/2.$
Since both numbers are integers we can express this as 
 $(p_n+1)/2+m_n>(p_{n+1}-3)/2.$
The validity of this inequality is obvious, since
$$\frac{p_n+1}{2}+m_n\ge \frac{p_n+1}{2}+\frac{\sqrt{p_n}-3}{2}>
\frac{p_{n+1}-3}{2},$$
where the second inequality is a consequence
of our assumption 
$d_n< \sqrt{p_n}+1$.\\
\indent Part b)
follows on noting that the number of 
integers of $\mathcal R$ that are not 
in $I_n$ is bounded above by $d_n/2-1-m_n,$ 
which we see is bounded above by the integer in \eqref{eq:floor} on using $m_n\ge (\sqrt{p_{n}}-3)/2$.
\end{proof}

Since we believe that \eqref{eq:primeinequality} holds for all $p_n\ge 127,$ Lemma \ref{l:upto2h}
leads us to make the following conjecture.
\begin{Con}
\label{c.1.5.63}
We have $\mathcal R=\mathbb N\backslash \{1,5,63\}.$
\end{Con}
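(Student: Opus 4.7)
The plan is to split the claimed equality $\mathcal R = \mathbb N\backslash \{1,5,63\}$ into the two inclusions and handle them by rather different means. For the inclusion $\mathcal R \cap \{1,5,63\} = \emptyset$, I would argue directly from the definition \eqref{R}: a natural number $h$ lies in $\mathcal R$ iff $h = (p+1)/2 + m$ for some prime $p$ and integer $m\ge 0$ with $4m^2+2m+3\le p$. For each $h \in \{1,5,63\}$ the latter constraints leave only finitely many candidate pairs $(p,m)$ (in particular $p\le 2h-1$ and $m\le h-1$), and a brute-force case analysis rules them all out. This is precisely the ``computational observation'' alluded to after Lemma~\ref{l:upto2h}, made into a short finite verification.

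For the harder inclusion $\mathbb N\backslash \{1,5,63\} \subseteq \mathcal R$ I would feed Conjecture~\ref{Pi.Andr} into Lemma~\ref{l:upto2h}. Under that conjecture the prime-gap hypothesis of Lemma~\ref{l:upto2h} is satisfied for \emph{every} integer $h$, and hence $\mathbb N_h\backslash \{1,5,63\} \subseteq \mathcal R$ for all $h\in\mathbb N$. Taking the union over $h$ yields the desired inclusion. Combined with the first step this gives the conjectural equality.

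The main obstacle, of course, is establishing Conjecture~\ref{Pi.Andr} itself, and the paper already notes that this is a slight strengthening of Andrica's conjecture which is out of reach even under the Riemann Hypothesis, since Cram\'er's RH bound $d_n = O(\sqrt{p_n}\log p_n)$ carries an unwanted logarithmic factor. An unconditional attack would instead have to strengthen Lemma~\ref{l:upto2h} itself, for example by extending Theorem~\ref{t.Eugenia} of Moree--Ro\c{s}u to values of $m$ beyond the current range $m \leq (\sqrt{p}-1)/2$, so that the integers lying inside a potentially long prime-gap interval $I_n$ could still be shown to belong to $\mathcal R$; alternatively, one could try to enlarge $\mathcal R$ by producing cyclotomic heights of the required size from a completely different family of ternary $\Phi_{pqr}$. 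Neither alternative is within reach of current methods, which is why the statement can at present be formulated only as a conjecture.
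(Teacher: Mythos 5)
Your proposal is correct and mirrors the paper's own reasoning: the paper states Conjecture~\ref{c.1.5.63} as a consequence of Lemma~\ref{l:upto2h} together with the computational observation that $1,5,63$ are the only naturals below $64$ outside $\mathcal R$, and like you it treats the inclusion $\mathbb N\setminus\{1,5,63\}\subseteq\mathcal R$ as conditional on Conjecture~\ref{Pi.Andr}. Since the statement is a conjecture rather than a theorem, there is no unconditional argument to compare against, and your honest identification of the conditional structure and the finite check for $\{1,5,63\}$ is exactly what the paper intends.
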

The numbers $1,5$ and $63$ are special in our story.
\begin{Lem}
\label{l:1563}
The integers $1$ and $5$ are in $\mathcal A_t\subseteq \mathcal A,$ but not in $\mathcal A_{opt}.$
The integer $63$ is in $\mathcal A_{opt}\subset \mathcal A_t\subseteq \mathcal A,$ but not in
$\mathcal R.$
\end{Lem}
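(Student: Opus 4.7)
The lemma splits into six assertions: the containments $\mathcal A_{opt}\subset\mathcal A_t\subseteq\mathcal A$ (immediate from the definitions), the three positive memberships $1,5\in\mathcal A_t$ and $63\in\mathcal A_{opt}$, and the three non-memberships $1,5\notin\mathcal A_{opt}$ and $63\notin\mathcal R$. I would handle the non-memberships by elementary arguments first and then exhibit explicit examples for the positive memberships; the main obstacle will be $63\in\mathcal A_{opt}$.

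For $63\notin\mathcal R$: by \eqref{R}, writing $63=(p+1)/2+m$ with $m\ge 0$, $p$ prime, and $4m^2+2m+3\le p$ forces $p=125-2m$ together with $m^2+m\le 30$, so $m\in\{0,1,\ldots,5\}$; the six candidate values $125,123,121,119,117,115$ are all composite. For $1,5\notin\mathcal A_{opt}$: for optimal $\Phi_{pqr}$ the set $A\{pqr\}$ is an integer interval of cardinality $p+1$ by optimality and the Gallot--Moree consecutivity result \cite{buur}, and it contains $1$ since $a_{pqr}(0)=1$ for any composite $n$ with at least two distinct prime factors. Writing $A\{pqr\}=[a,a+p]\cap\mathbb Z$ with $a\in\{1-p,\ldots,1\}$ and minimising $\max(|a|,a+p)$ gives $A(pqr)\ge(p+1)/2\ge 2$, which excludes $A=1$. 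For $A=5$ the same bound forces $p\in\{3,5,7\}$; the cases $p=3,5$ are killed by the classical bound $A(pqr)\le p-1$, while $p=7$ is killed by Beiter's conjecture (known for $p\le 7$), which gives $A(7qr)\le 4$.

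For $1,5\in\mathcal A_t$ a single explicit ternary example in each case suffices. Height $1$ is realised by small ternary $n=3qr$ with $(q,r)$ satisfying a classical congruence condition (verifiable by direct expansion of a specific $\Phi_{3qr}$), and height $5$ can be produced by direct computation of $A(11qr)$ for suitable small primes $q,r$.

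The main obstacle is $63\in\mathcal A_{opt}$: the first paragraph shows that the Bachman/Moree--Ro\c su families encoded in $\mathcal R$ provably miss height $63$, so an \emph{ad hoc} optimal example is required. The structural bounds above restrict the search to primes $p$ with $64\le p\le 125$, i.e.\ $p\in\{67,71,73,79,83,89,97,101,103,107,109,113\}$; within this finite range I would run a numerical search over moderate $q,r$ to produce a single ternary $n=pqr$ with both $A(n)=63$ and $\#A\{n\}=p+1$. Although heuristics favour existence, the verification appears unavoidable, since no general theorem in the literature seems to cover height $63$.
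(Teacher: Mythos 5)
Your arguments for the non-memberships track the paper closely. For $1\notin\mathcal A_{opt}$ and $5\notin\mathcal A_{opt}$ you use the same key bound $A(pqr)\ge(p+1)/2$ for optimal $pqr$; the paper's $p=7$ case is killed by citing Zhao--Zhang ($A\{7qr\}\subseteq[-4,4]$), which is materially the same fact you invoke as ``Beiter's conjecture for $p\le 7$''. Your computation showing $63\notin\mathcal R$ (reducing to $p\in\{125,123,121,119,117,115\}$, all composite) is correct, although the paper places that observation with Lemma~\ref{l:upto2h} rather than inside this proof. Leaving $1,5\in\mathcal A_t$ to unexhibited examples is acceptable; the paper's Table~1 gives $A(3\cdot7\cdot11)=1$ and $A(11\cdot13\cdot19)=5$.

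The genuine gap is $63\in\mathcal A_{opt}$, which you explicitly leave open as a ``numerical search over moderate $q,r$.'' Two problems. First, your assertion that ``no general theorem in the literature seems to cover height $63$'' is wrong: the set $\mathcal R$ only encodes the simplified Theorem~1.1 of Moree--Ro\c su~\cite{Eugenia}, but their Theorem~3.1 is more flexible, and the paper applies it directly with $p=109$ and $l=15$ to produce $A\{109\cdot 6803\cdot 12084113\}=[-46,\ldots,63]\cap\mathbb Z$. Second, even as a computational plan your proposal would not work: any optimal $\Phi_{pqr}$ arising from these constructions satisfies $r>pq$, and the actual witness has $q=6803$, $r=12084113$, so $n\approx 9\times10^{12}$ --- far outside ``moderate $q,r$.'' Thus the hardest sixth of the lemma is missing both the correct tool and a feasible fallback, and the proof as proposed is incomplete.
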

\begin{proof}
If $pqr$ is optimal, then $A(pqr)\ge (p+1)/2\ge 2$ and so $1\not\in  \mathcal A_{opt}$.
It is also easy to see that there is no optimal $pqr$ such that $A(pqr)=5$. If such an optimal $pqr$ would exist, then
as $A(pqr)\le 3$ for $p\le 5$ and $A(pqr)\ge 6$ for $p\ge 11$ (for an optimal 
$pqr$), this would force $p=7$ and $A\{7qr\}=[-5,2]\cap \mathbb Z$ or
$A\{7qr\}=[-2,5]\cap \mathbb Z,$ contradicting the result of
Zhao and Zhang \cite{ZZ1} that 
$A\{7qr\}\subseteq [-4,4]\cap \mathbb Z$.

The number $63$ is in  $\mathcal A_{opt}$.
This follows on applying Theorem 3.1 of \cite{Eugenia}. 
The obvious approach is to consider the largest prime
$p$ such that $(p+1)/2<63,$ which is $p=113,$ and take 
$l=11$ (here and below we use the notation of
Theorem 3.1).
For this combination the result does not
apply, unfortunately. However, it does for $p=109$ and 
$l=15$, in which case we obtain 
$A\{109\cdot 6803\cdot 12084113\}=[-46,\ldots,63]\cap \mathbb Z$ 
(with $q=6803$, $\rho=2870,$ $\sigma=62,$ $s=46$, $\tau=18,$ $w=45$, $r_1=12084113$). 
\end{proof}

\begin{proof}[ Proof of Theorem \ref{thm:main1}]
This follows on 
combining Lemmas \ref{l:Aopt}, \ref{l:upto2h} and \ref{l:1563}. 
\end{proof}

\section{Proof of Theorem \ref{thm:main2}}
\label{sec:gaps}
The goal of this section is to prove Theorem \ref{thm:main2}. The quantity
of central interest, $N(x),$ is 
defined below.
\begin{Def}
The number of
integers $\le x$ that does not occur
as a height of an optimal ternary cyclotomic polynomial is denoted by $N(x)$.
\end{Def}

\begin{Lem}
\label{lem:Ex}
We have $N(x)\le E(2x)/2+O(1),$ where
$$
E(x)=\sum_{p_n\le x
\atop
p_{n+1}-p_n\ge \sqrt{p_n}+1}(p_{n+1}-p_n-\sqrt{p_n}+1).
$$
\end{Lem}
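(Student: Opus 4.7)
The plan is to combine Lemmas \ref{l:Aopt} and \ref{l.pi1}. Since $\mathcal R \subseteq \mathcal A_{opt}$ by Lemma \ref{l:Aopt}, every $m\le x$ with $m\notin\mathcal A_{opt}$ must also lie outside $\mathcal R$. Hence
$$
N(x) \le \#\bigl(\mathbb N_x \setminus \mathcal R\bigr),
$$
and it suffices to bound the right-hand side by $E(2x)/2+O(1)$.

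Next I would exploit the tiling of $\mathbb N$ by the intervals $I_n=[(p_n+1)/2,(p_{n+1}-1)/2]$ from Lemma \ref{l.pi1}. Because $p_{n+1}$ is odd for every $n\ge 2$, the upper endpoint of $I_n$ and the lower endpoint of $I_{n+1}$ are consecutive integers, so $\bigcup_{n\ge 1}(I_n\cap\mathbb N)=\mathbb N\setminus\{1\}$, the union being disjoint. Moreover, any $m\in I_n$ with $m\le x$ forces $p_n\le 2m-1\le 2x$. Therefore
$$
\#(\mathbb N_x \setminus \mathcal R) \le 1 + \sum_{p_n\le 2x} \#\bigl(I_n \cap \mathbb N \setminus \mathcal R\bigr).
$$

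For the four indices with $p_n\in\{2,3,5,7\}$ the corresponding intervals $I_n$ contain only boundedly many integers in total, so their contribution is absorbed into an absolute constant. For $n\ge 5$, Lemma \ref{l.pi1} yields $\#(I_n\cap\mathbb N\setminus\mathcal R)=0$ when $p_{n+1}-p_n<\sqrt{p_n}+1$, and at most $(p_{n+1}-p_n-\sqrt{p_n}+1)/2$ otherwise (using the trivial bound $\lfloor y\rfloor\le y$ on \eqref{eq:floor}). Summing these bounds over $p_n\le 2x$ and recognising the resulting sum as $E(2x)/2$ yields
$$
N(x)\le \frac{E(2x)}{2}+O(1).
$$

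There is no genuinely hard step: the whole content of the lemma is the combinatorial bookkeeping. The only subtlety worth double-checking is that the $I_n$ really do partition $\mathbb N\setminus\{1\}$ and that the indexing condition $p_n\le 2x$ coming from the constraint $m\le x$ matches the condition appearing in the definition of $E(2x)$; beyond that the proof is a direct substitution of the estimates from Lemma \ref{l.pi1}b into the tiling.
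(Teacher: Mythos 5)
Your proof is correct and follows essentially the same route as the paper: reduce via Lemma \ref{l:Aopt} to counting integers outside $\mathcal R$, then apply Lemma \ref{l.pi1}b on the tiling intervals $I_n$ for $p_n\le 2x$ and absorb the small-$n$ contribution into the $O(1)$. The paper states this in two lines; you have merely filled in the bookkeeping (the disjoint tiling of $\mathbb N\setminus\{1\}$ by the $I_n$ and the bound $p_n\le 2x$), which the paper leaves implicit.
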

\begin{proof}
By Lemma \ref{l:Aopt} it suffices to bound above the number of integers $\le x$ that are not in
$\mathcal R.$ By Lemma \ref{l.pi1}b) this cardinality, in turn, is bounded above by $E(2x)/2+O(1).$ 
\end{proof}
If Cram\'er's conjecture $d_n = O\left((\log p_n)^2\right)$ holds true, then this lemma implies that $N(x)=O(1)$.

Heath-Brown \cite{primegap} recently proved the following result which gives an upper bound for  $E(x) $.
\begin{Lem}[Heath-Brown]
\label{t.HB}
We have 
$$\sum_{p_n\le x\atop p_{n+1}-p_n\ge \sqrt{p_n}}(p_{n+1}-p_n)\ll_{\epsilon} x^{3/5+\epsilon}.$$
\end{Lem}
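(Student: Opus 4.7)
The plan is to follow the now-standard route from large values of Dirichlet polynomials to large prime gaps, which is the framework in which Heath-Brown's recent progress fits. First I would reduce the weighted sum to a counting problem. Writing $V(y,x)$ for the number of $n$ with $p_n \le x$ and $p_{n+1}-p_n \ge y$, a dyadic decomposition of the $d_n$ gives
$$
\sum_{\substack{p_n \le x \\ d_n \ge \sqrt{p_n}}} d_n \;\ll\; \log x \cdot \sup_{y} \, y \, V(y,x),
$$
where the supremum is over dyadic $y$ in the range $\sqrt{x}/2 \le y \le x$, up to a harmless contribution from the very largest gap. The task is then to establish $y\,V(y,x) \ll_{\epsilon} x^{3/5+\epsilon}$ uniformly in this range.

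Next I would translate large gaps into anomalous short-interval behaviour of the prime counting function. A gap of size at least $y$ near a point $u \le x$ forces $\psi(u+y) - \psi(u) = 0$ and hence $|\psi(u+y) - \psi(u) - y| \ge y$. Therefore $V(y,x)$ is bounded by the number of disjoint intervals of length $y$ in $[1,x]$ on which the short-interval error is atypically large, reducing matters to an $L^\infty$ version of a short-interval prime number theorem with good control on the exceptional set.

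At this point I would invoke Heath-Brown's identity to decompose the von Mangoldt function into Type I and Type II sums, and then apply hybrid large-value estimates for the resulting Dirichlet polynomials together with zero-density theorems for $\zeta(s)$ near the line $\sigma=1$. The idea is to pass from a pointwise error bound to the measure of the joint large-value sets of certain Dirichlet polynomials, and then bound this measure using the best available density estimates $N(\sigma,T)$. Optimising the exponents across the Type I/Type II split, with the density input playing the decisive role, is what produces the exponent $3/5$; morally this replaces the $x^{7/12}$ barrier of Huxley's bound for a single gap by an average-type bound over gaps.

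The main obstacle is precisely this last optimisation. The translation from large-value theorems to bounds on $V(y,x)$ is a well-oiled template, but reaching the exponent $3/5$ requires carefully tracking the interaction between the length scale $y$ and the density exponent close to $\sigma=1$, and any loss in the input zero-density estimate propagates directly into a weaker final exponent. This is where the depth of Heath-Brown's contribution lies, and also why under the Lindel\"of Hypothesis (which essentially gives the density hypothesis for $\zeta$) one can hope for the substantially sharper bound $x^{1/2+\epsilon}$ used later in the paper.
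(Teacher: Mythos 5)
The paper does not prove this lemma: it is quoted verbatim from Heath-Brown \cite{primegap} and used as a black box, so there is no in-paper proof against which to compare your attempt.

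Your sketch is a plausible bird's-eye description of the kind of argument behind Heath-Brown's theorem --- dyadic reduction to counting prime-free intervals, translation to large deviations of $\psi$ in short intervals, and then a Dirichlet-polynomial attack via a combinatorial identity for $\Lambda$, large-value estimates, and zero-density information. But as written it is a roadmap rather than a proof: everything that actually produces the exponent $3/5$ (the precise decomposition of $\Lambda$, the particular mean-value and large-value inequalities invoked, the sieve input, and the delicate optimisation over length scales) is compressed into the phrase ``optimising the exponents across the Type I/Type II split,'' which is exactly the hard content of Heath-Brown's paper and cannot be conjured by appeal to a ``well-oiled template.'' There is also a concrete local issue in your reduction: the condition in the sum is $d_n\ge\sqrt{p_n}$, not $d_n\ge\sqrt{x}$, so the dyadic range $y\in[\sqrt{x}/2,\,x]$ you set up does not cover the whole sum. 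You should either localise $p_n$ into dyadic ranges $(P,2P]$ with threshold $\sqrt{P}$ and sum the resulting $P^{3/5+\epsilon}$ bounds, or first discard $p_n\le x^{3/5}$ (whose total contribution is trivially $\le x^{3/5}$) before restricting to the regime where your choice of $y$-range is appropriate.
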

\begin{proof}[Proof of the conditional bound of Theorem \ref{thm:main2}] This follows on 
combining the latter upper bound for 
$E(x)$ with Lemma \ref{lem:Ex}.
\end{proof}

In order to complete the proof of Theorem~\ref{thm:main2} we need to
improve the exponent $3/5$ in Lemma~\ref{t.HB}
to $1/2$, conditionally on 
the Lindel\"of Hypothesis.
The Lindel\"of Hypothesis states that 
for all fixed $\epsilon>0$ 
we have  $$ \zeta(1/2 +it)=O_\epsilon
\left( t^\epsilon \right ) , \,t \in \mathbb R, \,t>1,$$
where as usual $\zeta $ denotes the
Riemann zeta function.
It is well-known that the Riemann Hypothesis implies the Lindel\"of Hypothesis, but not vice versa.
There is a large body of work concerning the Lindel\"of Hypothesis (see, for example, the recent work of Bourgain~\cite{MR3556291}), however, it is still open. 
  
We will make use of the following  result of Yu \cite{MR1374401}.
\begin{Lem}
[Yu]
\label{thm:yu2} 
Fix any $\epsilon>0$.
Under the Lindel\"of Hypothesis we have
$$ \sum_{p_n \leq x } (p_{n+1} - p_n)^2 
\ll_{\epsilon} x^{1 +\epsilon}.$$
\end{Lem}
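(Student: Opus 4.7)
The plan is to reduce Yu's estimate to a short-interval second-moment bound for $\psi$, which under LH can be proved by expanding via the explicit formula and invoking power-moment bounds for $\zeta(1/2+it)$; this is in broad strokes the route taken in \cite{MR1374401}.

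Setting $T(x,h):=\#\{n:p_n\le x,\, d_n\ge h\}$, partial summation gives
$$\sum_{p_n\le x}d_n^{2} \;=\; 2\int_0^{\infty} h\, T(x,h)\,dh.$$
For $h\le (\log x)^{2}$ the trivial bound $T(x,h)\le\pi(x)\ll x/\log x$ contributes $\ll x(\log x)^{3}$, which is acceptable. For $h>(\log x)^{2}$, every gap with $d_n\ge 2h$ and $p_n\le x$ yields a subinterval of length $\ge d_n - h$ on which $|\psi(y+h)-\psi(y)-h|\gg h$, and summing the contributions of such gaps gives
$$h^{3}\,T(x,2h)\;\ll\;\int_{0}^{x}\bigl(\psi(y+h)-\psi(y)-h\bigr)^{2}\,dy.$$
Thus it suffices to prove, under LH, the uniform estimate
$$\int_{0}^{x}\bigl(\psi(y+h)-\psi(y)-h\bigr)^{2}\,dy \;\ll_{\epsilon}\;h\,x^{1+\epsilon}$$
throughout the relevant range of $h$. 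Once this is in hand one obtains $T(x,h)\ll_\epsilon x^{1+\epsilon}/h^{2}$, and a logarithmic integration in $h$ delivers $\sum d_n^{2}\ll_\epsilon x^{1+\epsilon}$.

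The main obstacle is this uniform second-moment bound. One expands the integrand via the truncated explicit formula at height $Y$ as a short sum over nontrivial zeros of $\zeta$, squares, and integrates in $y$. The diagonal terms reduce to $\int_{1}^{Y}|\zeta(1/2+it)|^{2}\,dt\ll_{\epsilon} Y^{1+\epsilon}$, which LH furnishes directly, while the off-diagonal cross terms are handled via higher power moments of $\zeta(1/2+it)$ on the critical line (all $\ll_{\epsilon} Y^{1+\epsilon}$ under LH), together with stationary-phase estimates for the arising oscillatory integrals. The delicate point is keeping the estimate uniform in $h$ across the full useful range, which requires an optimal choice of the truncation height $Y$ in terms of both $x$ and $h$, and careful bookkeeping of the $\epsilon$-losses so that they do not accumulate when one returns to the original sum over gaps.
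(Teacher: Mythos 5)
The paper does not prove this lemma: it is quoted verbatim as a result of Yu \cite{MR1374401} and used as a black box, so there is no ``paper's proof'' to compare against in a line-by-line sense. What can be assessed is whether your sketch is a faithful and viable route.

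Your reduction steps are standard and correct: the partial-summation identity $\sum_{p_n\le x}d_n^2 = 2\int_0^\infty h\,T(x,h)\,dh$, the trivial treatment of $h\le(\log x)^2$, and the passage from $T(x,2h)$ to a short-interval second moment bound of the form $\int_0^x(\psi(y+h)-\psi(y)-h)^2\,dy\ll_\epsilon hx^{1+\epsilon}$ are exactly the right skeleton, and the arithmetic recovering $\sum d_n^2\ll_\epsilon x^{1+\epsilon}$ from $T(x,h)\ll_\epsilon x^{1+\epsilon}/h^2$ is fine.

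The weak link is the central analytic claim. You say the diagonal terms reduce to $\int_1^Y|\zeta(1/2+it)|^2\,dt\ll_\epsilon Y^{1+\epsilon}$ and that ``LH furnishes this directly.'' This is off on two counts: the second moment of $\zeta$ on the critical line is $\sim Y\log Y$ \emph{unconditionally} (Hardy--Littlewood), so there is nothing for LH to furnish here; and after squaring the truncated explicit formula $\psi(y+h)-\psi(y)-h=-\sum_{|\gamma|\le T}((y+h)^\rho-y^\rho)/\rho+O(\cdot)$ and integrating in $y$, what actually appears is a double sum over nontrivial zeros $\sum_{\rho,\rho'}$ weighted by $(X^{\rho+\bar\rho'-1}-1)/(\rho+\bar\rho'-1)$, not a second moment of $\zeta$. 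Without RH the real parts $\beta,\beta'$ are not pinned to $1/2$, and the contribution of zeros with $\beta$ near $1$ must be controlled by a zero-density estimate. The way LH enters is precisely that it implies the density hypothesis $N(\sigma,T)\ll_\epsilon T^{2(1-\sigma)+\epsilon}$, and that is the input which makes the double sum over zeros manageable; your appeal to higher $\zeta$-moments on the critical line is not the mechanism. So the scaffolding of your argument is right, but the key step as written does not go through and mislocates where the hypothesis is actually used.
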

From it one can easily derive a conditional
improvement of Lemma \ref{t.HB}.
\begin{Lem}
\label{lem:rtyuhb23}
Assume the Lindel\"of Hypothesis and fix any 
$\epsilon>0$. Then  we have
\[
\sum_{p_n\leq x \atop p_{n+1} -p_n \geq \sqrt{p_n} }
(p_{n+1} -p_n )
\ll_{\epsilon}
x^{1/2+\epsilon}
.\]
\end{Lem}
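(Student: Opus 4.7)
The plan is to deduce this directly from Lemma \ref{thm:yu2} by a weighting trick followed by partial summation. Writing $d_n = p_{n+1} - p_n$, the essential observation is that on the range of summation we have $d_n/\sqrt{p_n} \ge 1$, hence the trivial pointwise inequality
$$d_n \le \frac{d_n^2}{\sqrt{p_n}}.$$
Dropping the restriction $d_n \ge \sqrt{p_n}$ (which only enlarges the sum) therefore yields
$$\sum_{p_n\leq x,\, d_n \ge \sqrt{p_n}} d_n \;\le\; \sum_{p_n \leq x} \frac{d_n^{2}}{\sqrt{p_n}}.$$

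To estimate the right-hand side I would apply partial (Abel) summation against Yu's bound. Set $T(y) := \sum_{p_n \le y} d_n^2$, so that under the Lindel\"of Hypothesis Lemma \ref{thm:yu2} gives $T(y) \ll_\epsilon y^{1+\epsilon}$. Then
$$\sum_{p_n \le x} \frac{d_n^{2}}{\sqrt{p_n}} = \frac{T(x)}{\sqrt{x}} + \frac{1}{2}\int_{2}^{x} \frac{T(y)}{y^{3/2}}\, dy \ll_\epsilon x^{1/2+\epsilon} + \int_{2}^{x} y^{\epsilon - 1/2}\, dy \ll_\epsilon x^{1/2+\epsilon},$$
which is the desired inequality (after shrinking $\epsilon$ and renaming, to absorb the integration constant).

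I do not anticipate a genuine obstacle: the argument is a one-line trade of $d_n$ for $d_n^2/\sqrt{p_n}$ plus a routine partial summation. The only mildly delicate point is making sure the weighting by $1/\sqrt{p_n}$ does not cost more than the savings from $d_n^2 \to d_n$, which is exactly why the exponent $1/2$ in Lemma \ref{thm:yu2} cleanly produces the exponent $1/2$ here. A slightly more hands-on alternative, which avoids partial summation, is a dyadic decomposition $p_n \in [2^k, 2^{k+1})$: on each dyadic block one applies Cauchy--Schwarz to $\sum d_n$ with square sum controlled by Yu and the count bounded by $\sum d_n^2 / 2^k \ll_\epsilon 2^{k\epsilon}$, giving $\ll_\epsilon 2^{k(1/2+\epsilon)}$ per block, which telescopes to $\ll_\epsilon x^{1/2+\epsilon}$.
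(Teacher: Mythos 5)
Your proposal is correct and rests on exactly the same key idea as the paper, namely the pointwise inequality $d_n \le d_n^2/\sqrt{p_n}$ on the range $d_n \ge \sqrt{p_n}$, which turns the problem into bounding $\sum d_n^2/\sqrt{p_n}$ and then feeding in Yu's bound from Lemma~\ref{thm:yu2}. The only difference is a routine technical one: the paper handles the weight $1/\sqrt{p_n}$ by dyadic decomposition (pulling out $1/\sqrt{y}$ from each block $[y,2y]$ and keeping the dyadic $\log x$ in the $\epsilon$), whereas you use Abel summation on $T(y)=\sum_{p_n\le y}d_n^2$; your partial-summation computation is correct, and your dyadic alternative at the end is even closer to the paper's argument, so the two proofs are interchangeable.
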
 
\begin{proof}
Using dyadic division of the interval $[1,x]$ we obtain 
 $$
 \sum_{p_n \leq x  \atop 
 d_n\ge \sqrt{p_n}  } 
 d_n
 \ll
( \log x) \, \max_{1\leq y\leq x }
 \sum_{y<p_n \leq 2y   \atop 
 d_n\ge \sqrt{p_n}  } 
 d_n
 \leq
 ( \log x) 
 \max_{1\leq y\leq x }
 \sum_{y<p_n \leq 2y   \atop 
 d_n\ge \sqrt{p_n}  } 
 \frac{
  d_n^2
 }{\sqrt{p_n}},$$ 
 which by Lemma~\ref{thm:yu2}
 is at most 
 $$
 \hskip 3,5 cm
 (\log x ) \,
 \max_{1\leq y\leq x }
\frac{1}{\sqrt{y}}
 \sum_{y<p_n \leq 2y   } 
  d_n^2
  \ll_{\epsilon} x^{1/2+\epsilon}. 
   \hskip 3,5 cm\qedhere
   $$
  \end{proof}

 \begin{proof}[Proof of the conditional bound of Theorem \ref{thm:main2}]
This follows on 
combining Lemma \ref{lem:Ex} with Lemma 
\ref{lem:rtyuhb23}.
\end{proof}
\begin{rem}
{\rm Although it is not required for the applications in the present paper, one can prove  slightly
stronger variants of Lemmas \ref{t.HB} and \ref{lem:rtyuhb23}, namely, 
that for every fixed $C>0$ and $\epsilon>0,$ we have 
\[
\sum_{p_n\leq x \atop p_{n+1} -p_n \geq C \sqrt{p_n} }
(p_{n+1} -p_n )
\ll_{C,\epsilon}
x^{\alpha+\epsilon}
,\]
with $\alpha=3/5$ (unconditionally) and $\alpha=1/2$
under the Lindel\"of Hypothesis (for details see
Kosyak et al.\,\cite{KMSZ}).}
\end{rem}

\section{A special case of the Bateman--Horn conjecture on average}
\label{s:momentstwo}
The goal of this section is to prove Theorem \ref{thm:main3}. 
Although the unconditional statement in Theorem \ref{thm:main3} is surpassed by the unconditional statement in Theorem \ref{thm:main2}, the proof of Theorem \ref{thm:main3} is, in a way, `orthogonal' to the one of Theorem \ref{thm:main2}; it thus has the potential of working in variations of the problem where the method 
behind Theorem \ref{thm:main2} would fail. Interestingly, like our prime gap criterion, it rests on a variation 
(implicit in Lemma \ref{withWilms}) of a 
certain very well studied problem involving prime numbers.
Both prime number questions are, however, quite different.
Lemma \ref{withWilms} allows one to show that many odd heights occur among
the ternary cyclotomic polynomials in a way different from
Theorem \ref{t.Eugenia}. 
\begin{Lem} 
\label{withWilms}
Let $h>1$ be odd. If there exists a prime
$p\ge 2h-1,$ such that the integer $q:=1+(h-1)p$
is a prime too, then $A(pqr)=h$ for some prime $r>q$.
 For $r$ one can take any prime
$r_1>q$ satisfying
$r_1(p+q)/2\equiv 1\,({\rm mod~}pq)$.
\end{Lem}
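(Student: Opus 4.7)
The plan is to deduce the lemma as an application of the explicit height formulas for ternary cyclotomic polynomials developed in Gallot, Moree and Wilms \cite{GMW}, with Dirichlet's theorem handling the existence of the prime $r_1$.

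I would first unpack the joint congruence $r_1(p+q)/2 \equiv 1 \pmod{pq}$ into its components modulo $p$ and modulo $q$. Since $q \equiv 1 \pmod{p}$, reducing modulo $p$ yields $r_1 \equiv 2 \pmod{p}$, while reducing modulo $q$ yields $r_1 \equiv 2 p^{-1} \pmod{q}$. Both residues are coprime to the respective modulus and hence to $pq$, so by Dirichlet's theorem on primes in arithmetic progressions there exist infinitely many primes $r_1 > q$ lying in this joint residue class modulo $pq$. In particular the condition $r_1 > q$ plays no role in the existence argument, it merely singles out the allowed ordering $p<q<r_1$ needed to view $pqr_1$ as ternary in the sense of Definition \ref{def:ternary}.

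Next, to compute $A(pqr_1)$, I would invoke the explicit coefficient description in \cite{GMW}. For ternary $n = pqr$ the height is controlled by the pair of residues $(r \bmod p,\, r \bmod q)$, or equivalently by their inverses modulo $p$ and $q$, through a combinatorial formula that counts certain lattice points in a rectangle of side lengths $p$ and $q$. The congruence $r_1(p+q)/2 \equiv 1 \pmod{pq}$ is engineered so that the relevant inverse residue equals $(p+q)/2 \bmod pq$, placing $(r_1 \bmod p,\, r_1 \bmod q)$ exactly in the configuration where the Gallot--Moree--Wilms formula becomes particularly transparent. Substituting the prescribed value $q = 1 + (h-1)p$ into the resulting closed form should then collapse it to the integer $h$, giving the claimed identity $A(pqr_1) = h$. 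Since for ternary $n$ the coefficients of $\Phi_n$ are known to form a set of consecutive integers with the jump-one property (cf.\ \cite{buur}), no separate upper bound argument is required beyond the formula itself.

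The main obstacle, and the step requiring the greatest care, is verifying both the applicability and the evaluation of the Gallot--Moree--Wilms formula in this regime. Their formulas hold only under size constraints on the inverse residues, roughly that they lie in prescribed halves of the intervals $[0,p)$ and $[0,q)$, and the hypothesis $p \ge 2h-1$ is precisely what guarantees these constraints when $q = 1 + (h-1)p$. Because the formula is piecewise, one must track exactly which branch governs the configuration $(r_1 \bmod p,\, r_1 \bmod q) = (2,\, 2 p^{-1})$, and check that the evaluation yields $h$ rather than a degenerate or boundary value. I expect this case analysis, rather than any deep new input, to absorb most of the work of the proof.
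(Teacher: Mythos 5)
Your overall strategy---reduce to the explicit results of Gallot--Moree--Wilms \cite{GMW}, use Dirichlet for the existence of $r_1$, and let the hypothesis $p\geq 2h-1$ enforce the needed size constraint---is exactly the route the paper takes, and your computation of the residues ($r_1\equiv 2\pmod p$ and $r_1\equiv 2p^{-1}\pmod q$) is correct. The one structural point you miss is that \cite{GMW} does not hand you a single closed-form expression for $A(pqr_1)$ in terms of $(r_1\bmod p, r_1\bmod q)$; rather the paper splits the argument into an upper bound and a lower bound. It introduces $M(p;q)=\max_{r>q}A(pqr)$ and invokes \cite[Theorem~43]{GMW}, which states that if $q\equiv 1\pmod p$ then $M(p;q)=\min\{(q-1)/p+1,\,(p+1)/2\}$; substituting $q=1+(h-1)p$ and using $p\geq 2h-1$ makes this minimum equal to $h$, giving $A(pqr)\leq h$ for every $r>q$. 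It then invokes \cite[Lemma~24]{GMW}, which shows that the specific $r_1$ obeying $r_1(p+q)/2\equiv 1\pmod{pq}$ satisfies $A(pqr_1)\geq h$. Combining the two yields $A(pqr_1)=h$. Your ``plug the residues into a formula and evaluate'' picture would in practice lead you to rediscover this two-step decomposition; the arithmetic you anticipate (the formula collapsing to $h$) is exactly what happens in the first step, and the case analysis about which branch of the piecewise formula applies corresponds to verifying $(p+1)/2\geq h$.
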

\begin{proof}
Define 
\begin{equation}
\label{eq:mpqr}
M(p;q)=\max_{r>q}\{A(pqr):2<p<q<r\}.
\end{equation}
Gallot et al.\,\cite[Theorem 43]{GMW} showed that 
if $q\equiv 1\,({\rm mod~}p),$ then
$$M(p;q)=\min\Big\{\frac{q-1}{p}+1,\frac{p+1}{2}\Big\}.$$
The conditions on $p$ and $h$ ensure that 
$M(p;q)=h.$ By \cite[Lemma 24]{GMW} it follows that $A(pqr_1)\ge h.$ This in combination with $M(p;q)=h$ shows that $A(pqr_1)=h$.
\end{proof}
In case $p\ge 2h+1$ the 
ternary cyclotomic polynomials from 
Lemma \ref{withWilms} are not optimal. We demonstrate
this in the case $h=63$ (with  $p=131$ and $q=8123$).
\begin{ex}
{\rm
Using the latter result and 
\cite[Lemma 24]{GMW}, we find that
$$A(131\cdot 8123\cdot 25497973)=\frac{8123-1}{131}+1=63$$
and $a_{131\cdot 8123\cdot 25497973}(13459462019674)=-63.$
}
\end{ex}
\medskip
We define the set 
$G\subset \mathbb N$ as follows, 
\begin{center}
$G:=\{m\in \mathbb N : 
\exists 
\ p  \,\in (4m,  32 m)$ 
such that $ 1+2mp $ is prime$\}$.
\end{center}
We would like to point out that the requirement $p< 32m$ is not 
necessary for the proof of Theorem \ref{thm:main3}, but is
needed in the proof of Theorem \ref{thm:main3+-}.

In the remaining part of this section
we show that  
the  
density of $G$ among all integers 
is positive,
i.e. that there exists $c_0>0$ such that
\begin{equation}\label{goal}
\liminf_{M\to+\infty}
\frac{\#\{m \in  G
\cap [1,M] \} }{M} \geq c_0.
\end{equation} 
For any natural number $m$
and any real number $x$ we define
\begin{center}
$ 
\pi_m(x) 
:=\#\left\{p\in [\frac{x}{2}\,,\,x) : 1+2mp \mbox{ is prime} \right\}$.
\end{center}
Further, for any $x\geq 0$ we 
define  
$$G(x) :=
\{m\in \mathbb N\cap [1,\,x/4):
\exists p \in (4m,x] \text{~such~that~}1+2mp \text{~is~prime}\}.$$
\begin{Lem}
\label{lem:dio}
For all $x,M\in \mathbb R$ 
with $x>8M $ and $M \geq 1 $
we have 
\begin{equation*}
\#\{m \in G(x)  
\cap (M/4, M]  
\} 
\sum_{1\le m\le M} \pi_m(x)^2 
\geq
\Big(
\sum_{M/4<m\le M} \pi_m(x)
\Big)
^2.
\end{equation*}
\end{Lem}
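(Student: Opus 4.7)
The plan is to recognize that the claimed inequality is a Cauchy--Schwarz inequality, in which the key input is a set-theoretic inclusion tying the positivity of $\pi_m(x)$ to membership in $G(x)$.

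First I would verify the inclusion: if $m\in (M/4,M]$ satisfies $\pi_m(x)>0$, then $m\in G(x)$. To see this, any prime $p$ counted by $\pi_m(x)$ lies in $[x/2,x)$, hence $p\ge x/2>4M\ge 4m$ by the hypothesis $x>8M$ and $m\le M$; moreover $m\le M<x/8<x/4$, so $m$ lies in the admissible range for $G(x)$. Since $1+2mp$ is prime by definition of $\pi_m(x)$, the point $m$ meets the defining condition of $G(x)$. Therefore
$$\{m\in (M/4,M] : \pi_m(x)>0\}\subseteq G(x)\cap (M/4,M].$$

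Next I would apply the Cauchy--Schwarz inequality to the sum on the right-hand side of the claim, restricting trivially to $m$ where $\pi_m(x)>0$:
$$\Bigl(\sum_{M/4<m\le M}\pi_m(x)\Bigr)^{2}=\Bigl(\sum_{\substack{M/4<m\le M\\ \pi_m(x)>0}}\pi_m(x)\cdot 1\Bigr)^{2}\le \#\{m\in (M/4,M]:\pi_m(x)>0\}\cdot\sum_{M/4<m\le M}\pi_m(x)^{2}.$$
Finally I would combine this with the set inclusion above to replace the first factor by $\#\{m\in G(x)\cap (M/4,M]\}$, and enlarge the range of the sum of squares from $(M/4,M]$ to $[1,M]$ (a legitimate step since all terms are nonnegative). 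This yields exactly the claimed inequality.

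There is no real obstacle: the only substantive content is matching the parameter ranges so that the inclusion into $G(x)$ is valid, which is precisely why the two numerical hypotheses $x>8M$ (to force $p>4m$) and $M\ge 1$ (to make the range $(M/4,M]$ meaningful) appear in the statement. The inequality is otherwise a textbook application of Cauchy--Schwarz.
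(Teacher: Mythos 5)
Your proof is correct and follows essentially the same approach as the paper: both hinge on the Cauchy--Schwarz inequality applied to the sum restricted to $m$ with $\pi_m(x)>0$, combined with the observation that $x>8M$ forces any prime $p\in[x/2,x)$ counted by $\pi_m(x)$ to satisfy $p>4m$ (so that such $m$ lie in $G(x)$). The only cosmetic difference is that you establish the set inclusion first while the paper performs Cauchy--Schwarz first (via an indicator function $u_m$), but the content is identical.
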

\begin{proof} 
Put \begin{equation*}
u_m(x)=
\left\{\begin{array}{ll}
1,&\text{if}\,\,\pi_m(x)\ne 0;\\
0,&\text{otherwise}.
\end{array}\right.
\end{equation*}
Fix $x>8M$.
By Cauchy's inequality
we have 
\begin{align*}
&\sum_{M/4< m\le M} \pi_m(x)
=
\sum_{M/4< m\le M} \pi_m(x)  u_m(x)
 \\
\leq&
\#\{M/4< m\leq M: \pi_m(x) >0 \}^{1/2}
\Big(
\sum_{1\le
m\le M} \pi_m(x)^2
\Big)
^{1/2}.
\end{align*}
If $m\le M$ and $p\geq x/2,$ then
$4m\le 4M <x/2 \leq 
p $, hence
$$\#M/4< m\leq M: \pi_m(x) >0 \}\leq \#\{m \in  G(x) 
 \cap  (M/4,M] 
 \},$$
concluding the proof.
\end{proof}

We would like to estimate the sums $\sum_{M/4< m\le M}\pi_m(x)$ and $\sum_{
1\le 
m\le M}\pi_m(x)^2$ 
appearing above. An upper bound, say $A$, 
for $\sum_{1\le m\le M}\pi_m(x)^2$ is easily obtained  by using standard sieve results.
Now if we could derive a lower bound $B$ for $\sum_{M/4< m\le M}\pi_m(x),$ then by Lemma \ref{lem:dio}
we get
$$\#\{m \in G(x)
\cap 
(M/4,M]  
\}\geq \frac{B^2}{A}.$$
Unfortunately the condition $x>8M$ makes it difficult to obtain a good lower bound for 
$\sum_{M/4< m\le M}\pi_m(x)$. We overcome this by using deep work of Bombieri, Friedlander and Iwaniec regarding the level of distribution 
of primes in  arithmetic progressions with fixed residue and varying moduli.

We start with estimating
$\sum_{1\le m\le M}\pi_m(x)^2$, for which we need the following 
lemma, which
is obtained on putting $b=k=l=1$ in \cite[Theorem 3.12]{HR}.

\begin{Lem}\label{thm:selbrgsiev}
Let $a$ be a positive even integer.
Then for all $x>1$ we have, uniformly in $a$, that
\begin{equation*}
\#\left\{p\leq x: ap+1 \mbox{ is prime} \right\} \leq  \frac{ 8 C_2 x }{(\log x )^2}
 \prod_{p \mid a 
 \atop
 p>2 }
\Big(
 \frac{p-1}{p-2} 
 \Big)
\Big\{
1+O
\Big(
\frac{\log \log x }{\log x }
\Big)\Big\},
\end{equation*}
where 
\begin{equation*}
C_2= \prod_{p>2} 
\Big(
1-\frac{1}{(p-1)^2} 
\Big)
\end{equation*}
is the twin prime constant.
\end{Lem}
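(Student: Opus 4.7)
The plan is to recognise this as a dimension-two sieve problem and apply the Selberg upper bound in the uniform form of Halberstam--Richert. The sifting sequence to take is
$$
\mathcal{A}=\{\,n(an+1):1\le n\le x\,\},
$$
sifted by all primes up to an appropriate level; every surviving integer $n$ is either prime with $an+1$ prime, or it is a product of a bounded number of sufficiently large primes whose contribution is absorbed in the sieve's error term.

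The first step is to verify the sieve hypotheses by computing the local densities $\omega(q):=\#\{n\bmod q:q\mid n(an+1)\}$. Since $a$ is even, for $q=2$ the factor $an+1$ is automatically odd, so $\omega(2)=1$. For odd $q\nmid a$ the residues $n\equiv 0$ and $n\equiv -a^{-1}\pmod q$ are both forbidden, so $\omega(q)=2$. For odd $q\mid a$ one has $an+1\equiv 1\pmod q$, and only $n\equiv 0$ matters, so $\omega(q)=1$. The density $\omega$ thus has average value two, and we are exactly in the setting of \cite[Theorem~3.12]{HR} with $b=k=l=1$, which outputs an upper bound of the shape
$$
\frac{x}{V(z)}\Big\{1+O\Big(\tfrac{\log\log x}{\log x}\Big)\Big\},\qquad V(z):=\prod_{q\le z}\Big(1-\tfrac{\omega(q)}{q}\Big),
$$
for the sieve level $z$ coming out of the theorem's optimisation.

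The final step is to evaluate $V(z)^{-1}$ in closed form. I would split the Euler product into its factor at $q=2$, its factors over odd $q\nmid a$, and its factors over odd $q\mid a$, then pull out $\prod_{q\le z}(1-1/q)^{-2}$ and apply Mertens' theorem to obtain the asymptotic $e^{2\gamma}(\log z)^{2}$. What remains is a convergent Euler product that rearranges to $8\,C_{2}\prod_{p\mid a,\,p>2}(p-1)/(p-2)$: the constant $8$ is the familiar twin-prime Selberg constant (the case $a=2$, when the extra product is empty), and the correction $\prod(p-1)/(p-2)$ appears because at each odd prime $p\mid a$ the local density drops from two to one. The main obstacle is uniformity in $a$: the Selberg remainder $\sum_{d\le z^{2}}3^{\omega(d)}|r_{d}|$ satisfies $|r_{d}|\le 1$ independently of $a$, so the sum is $O(x(\log x)^{-B})$ for any fixed $B$ and is absorbed into the $O(\log\log x/\log x)$ relative error.
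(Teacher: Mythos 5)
Your plan and the paper's proof both rest on the same citation, namely \cite[Theorem~3.12]{HR} with the parameters $b=k=l=1$; the paper gives only that one-line citation. Your local-density computation (that $a$ even forces $\omega(2)=1$, that $\omega(q)=2$ for odd $q\nmid a$ and $\omega(q)=1$ for odd $q\mid a$) is correct and explains why the singular series has the stated shape. However, two of the claims you add are wrong and would break an attempt to carry the derivation through by hand rather than by citation.

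First, \cite[Theorem~3.12]{HR} is itself a finished Selberg-sieve result: it already outputs the polished bound with the constant $8$, the factor $C_2$, and the local correction $\prod_{p\mid a,\,p>2}\frac{p-1}{p-2}$ in place. It does not output ``an upper bound of the shape $x/V(z)$'' with $V(z)=\prod_{q\le z}(1-\omega(q)/q)$ that you must then evaluate. Moreover $x/V(z)$ is the wrong shape for any sieve estimate: since $V(z)\asymp (\log z)^{-2}$ here, the quantity $x/V(z)$ grows like $x(\log z)^2$, whereas a Selberg main term must be comparable to $xV(z)$ (equivalently $X/G(z)$, with $G$ the dual sieve sum) and so decay like $x/(\log x)^2$.

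Second, the evaluation you propose does not recover the stated constant. Pulling $\prod_{q\le z}(1-1/q)^{-2}\sim e^{2\gamma}(\log z)^2$ out of $V(z)^{-1}$ leaves
\[
\prod_{q\le z}\frac{(1-1/q)^2}{1-\omega(q)/q}
\;\longrightarrow\;
\frac{1}{2C_2}\prod_{\substack{p\mid a\\ p>2}}\frac{p-2}{p-1},
\]
which is essentially the reciprocal of the $8C_2\prod(p-1)/(p-2)$ you claim. The factor $8=2^\kappa\,\Gamma(\kappa+1)$ (with $\kappa=2$) is not contained in the Euler product $V(z)$ at all: it is the Selberg-sieve constant emerging from the optimisation of the sieve weights and the asymptotics of the auxiliary sum $G(z)$. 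If you want to re-derive the lemma rather than cite it, you must carry through that part of the sieve machinery; evaluating $V(z)$ alone cannot produce the constant $8$.

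A minor further point: the error term in \cite[Theorem~3.12]{HR} carries a $\log\log |ab|$ contribution, so the uniformity in $a$ is only free of such a term when $a\ll x^{O(1)}$, which is the regime in which the lemma is applied later in the paper (namely $a=2m$ with $m\le M$ and $x\asymp M$).
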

\begin{rem}
\label{twin-primes}
{\rm
Hardy and Littlewood conjectured, based on heuristic reasoning, that asymptotically
\begin{equation*}
\#\left\{p\leq x: p+2 \mbox{ is prime} \right\} \sim 2C_{2}{\frac {x}{(\log x)^{2}}}.
\end{equation*}
A similar heuristic reasoning leads to the conjecture that asymptotically
$$\#\left\{p\leq x: ap+1 \mbox{ is prime} \right\}\sim
C_2
 \Bigg( \prod_{p \mid a 
 \atop
 p>2 }
 \Big(
 \frac{p-1}{p-2} 
 \Big)
  \Bigg)
\frac{    x }{(\log x )^2}
.$$
Both conjectures are special cases of the Bateman-Horn conjecture, cf.\ \cite{Al-ZomFukGarcia}.}
\end{rem}
\begin{Lem}\label{lem:classical sieve}
Let $x,M$ be any two positive real numbers.
Then
\[
\sum_{1\le m\le M} \pi_m(x)^2
 \leq
 64 C_1 C_2^2   M\frac{  x^2}{(\log x )^4}
\left\{1+O\left(\frac{\log \log x }{\log x } +\frac{1}{\sqrt{M}}\right)\right\}
,\] where  
\begin{equation*}
C_1:=
\prod_{p>2}
\left(1+
  \frac{2}{p(p-2)} + \frac{1}{p(p-2)^2}
  \right),
\end{equation*}
$C_2$ is the twin prime constant, and the implied
constant is absolute.
\end{Lem}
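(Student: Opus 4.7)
The plan is to bound $\pi_m(x)$ pointwise using the Selberg sieve estimate (Lemma~\ref{thm:selbrgsiev}), square, and then evaluate the arithmetic average that appears after summing over $m \le M$. First, since $\pi_m(x) \le \#\{p \le x : 2mp+1 \text{ prime}\}$, an application of Lemma~\ref{thm:selbrgsiev} with the even integer $a = 2m$ (and the identity $\{p > 2 : p \mid 2m\} = \{p > 2 : p \mid m\}$) yields
\begin{equation*}
\pi_m(x) \le \frac{8 C_2 x}{(\log x)^2} \prod_{\substack{p \mid m \\ p > 2}} \frac{p-1}{p-2}\left\{1 + O\!\left(\frac{\log \log x}{\log x}\right)\right\}.
\end{equation*}
Squaring and summing over $1 \le m \le M$ produces
\begin{equation*}
\sum_{m \le M} \pi_m(x)^2 \le \frac{64\, C_2^2\, x^2}{(\log x)^4}\left\{1 + O\!\left(\frac{\log \log x}{\log x}\right)\right\} S(M),
\end{equation*}
where $S(M) := \sum_{m \le M} f(m)$ and $f$ is the multiplicative function with $f(2^k) = 1$, and $f(p^k) = \bigl(\frac{p-1}{p-2}\bigr)^2$ for odd primes $p$ and $k \ge 1$. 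It thus suffices to show $S(M) = C_1 M + O(M^{1/2})$.

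To this end, I would apply M\"obius inversion to write $f = \mathbf{1} \ast g$. Since $f(p^k) = f(p)$ for all $k \ge 1$, the multiplicative function $g$ is supported on odd squarefree integers, with $g(p) = \frac{(p-1)^2 - (p-2)^2}{(p-2)^2} = \frac{2p-3}{(p-2)^2}$ at each odd prime $p$. A direct Euler product computation gives
\[
\sum_{d \ge 1} \frac{g(d)}{d} = \prod_{p > 2}\left(1 + \frac{g(p)}{p}\right) = \prod_{p>2}\left(1 + \frac{2}{p(p-2)} + \frac{1}{p(p-2)^2}\right) = C_1,
\]
so that
\[
S(M) = \sum_{d \le M} g(d)\lfloor M/d\rfloor = C_1 M + O\!\left(M\sum_{d > M}\frac{|g(d)|}{d} + \sum_{d \le M} |g(d)|\right).
\]

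The key observation needed for the error analysis is that $g(p) \sim 2/p$ as $p \to \infty$, so $|g(p)|/\sqrt{p} = O(p^{-3/2})$ and the Euler product $\sum_{d\ge 1} |g(d)|/d^{1/2} = \prod_{p>2}\bigl(1 + |g(p)|/\sqrt{p}\bigr)$ converges. A single Rankin-trick estimate then bounds both error terms simultaneously:
\[
\sum_{d \le M} |g(d)| \le M^{1/2}\sum_{d=1}^\infty \frac{|g(d)|}{d^{1/2}} \ll M^{1/2}, \qquad M\sum_{d > M} \frac{|g(d)|}{d} \le M^{1/2} \sum_{d=1}^\infty \frac{|g(d)|}{d^{1/2}} \ll M^{1/2}.
\]
Combining these estimates gives $S(M) = C_1 M + O(M^{1/2})$ and hence the lemma. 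The only mildly technical point is the convergence of $\sum_d |g(d)|/d^{1/2}$, which follows immediately from $g(p) = O(1/p)$; no deeper input is required, so the proof is essentially a textbook squaring-and-averaging application of the Selberg sieve together with a standard Euler product manipulation, the sharp constants $64\, C_1 C_2^2$ arising automatically from the squared sieve bound and the Dirichlet-series residue at $s=1$.
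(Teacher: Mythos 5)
Your proposal is correct and follows essentially the same route as the paper: square the Selberg-sieve pointwise bound with $a=2m$, expand the resulting product $\prod_{p\mid m,\,p>2}\bigl(\tfrac{p-1}{p-2}\bigr)^2$ as a Dirichlet convolution $\mathbf 1*g$ with $g$ supported on odd squarefree integers (the paper calls this auxiliary function $f$; your $g(p)=\tfrac{2p-3}{(p-2)^2}=\tfrac{2}{p-2}+\tfrac{1}{(p-2)^2}$ matches theirs exactly), and then evaluate $\sum_{m\le M}$ via the hyperbola identity $\sum_{d\le M}g(d)\lfloor M/d\rfloor$ to extract the main term $C_1M$ and an $O(\sqrt M)$ error. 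The only difference is cosmetic: you control both error terms in one stroke by a Rankin-trick bound through the convergent series $\sum_d|g(d)|d^{-1/2}$, while the paper first establishes the pointwise bound $g(d)\ll d^{-1/2}$ (via $g(p)\le C/p$ and $C^{\omega(d)}\ll\sqrt d$ for squarefree $d$) and then sums; both yield the same $O(M^{-1/2})$ relative error.
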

\begin{proof}
By Lemma \ref{thm:selbrgsiev}
with $a=2m$,
we get
\[
\pi_m(x)^2
\leq
8^2 C_2^2 \frac{   x^2 }{(\log x )^4}
 \prod_{ p \mid 2m \atop p>2 } \left( \frac{p-1}{p-2} \right)^2
\left\{   1+O\left(\frac{\log \log x }{\log x }\right)\right\}
, \] 
therefore, we
conclude
that
$\sum_{1\le m\le M} \pi_m(x)^2$ is at most
\[
8^2 C_2^2 \frac{  x^2}{(\log x )^4}
\left\{   1+O\left(\frac{\log \log x }{\log x }\right)\right\}
\sum_{1\le m\le M}
 \prod_{p \mid m \atop p>2 } \left( \frac{p-1}{p-2} \right)^2
.\]
We define the
multiplicative function $f$
via
\[
f(p^e):=
\mathds{1}_{p>2}(p)
\mathds{1}_{e=1}(e)
\left( \frac{2}{p-2} + \frac{1}{(p-2)^2}\right),
\
\
(e\in \mathbb N,  \, p \mbox{ prime})
.\]  One can easily verify
that
\[ \prod_{p \mid k \atop p>2  } 
\Big(
\frac{p-1}{p-2} 
\Big)^2
=\sum_{d\mid k }  f(d)=\sum_{d\mid k\atop 2\nmid d}  f(d)
\] for all non-zero
integers $k$.
This shows that
\begin{align*}
\sum_{1\le m\le M}
 \prod_{p \mid 2m \atop p>2  } \left( \frac{p-1}{p-2} \right)^2
 &=
\sum_{1\leq d \leq M \atop 2\nmid d}
  f(d)
\sum_{1\le m\le M \atop
  d \mid 2m}1
  \\
&=
M
\sum_{1\leq d \leq M}  \frac{ f(d)}{d}
 +O 
 \Big(
\sum_{1\leq d \leq M  }
  f(d) 
  \Big),
\end{align*}
where we used several times that $f(d)=0$ if $d$ is even.
Noting 
that
$f(p)\leq C/p$ for some absolute constant $C>0$
yields the bound
\[f(d) \leq \mu(d)^2 \frac{ C^{\omega(d) } }{d }  \ll
\frac{1}{\sqrt{d} }
,\ \  (d\in \mathbb N) , \]
which can be used to obtain 
\begin{align*}
\sum_{1\leq d \leq M }  \frac{ f(d)}{d}
&=
\sum_{d=1  
}^\infty   \frac{ f(d)}{d}  +O\left( \sum_{d> M}  \frac{1}{d^{3/2} } \right)
 =
C_1 +O\left(\frac{1}{\sqrt{M}} \right)
\end{align*}
and
\[
\sum_{1\leq d \leq M   }    f(d)
\ll  \sum_{1\leq d \leq M   }     \frac{1}{\sqrt{d}}
\ll \sqrt{M}
.\]
Putting everything together it follows that
\[
\sum_{1\le m\le M}
 \prod_{p \mid 2m \atop p>2 } \left( \frac{p-1}{p-2} \right)^2
=C_1M+O(\sqrt{M})
,\]
which is sufficient for our purposes.
\end{proof}

We now proceed to evaluate  the sum 
$\sum_{1\le m\le M} \pi_m(x)$
appearing in Lemma \ref{lem:dio}.
Writing
$n=1+2m p $ we see that it equals
\begin{eqnarray}
\label{eq:koukou}
 &&\sum_{M/4<   m\le M}
\sum_{ x/2  \leq p <  x \atop 1+2mp  \text{ prime}  }
1\nonumber
=\sum_{x/2  \leq p <  x  }
 \sum_{M/4< m\le M \atop 1+2mp  \text{ prime} }  1\nonumber\\
&=&
\sum_{ x/2  \leq p <  x }
\#\left\{n
\text{ prime}:  1+Mp/2 < n\leq 1 + 2Mp, \, n \equiv 1 ({\rm mod~}p) \right\}
\nonumber\\&\geq &
\frac{1}{\log (1+2Mx ) }
\sum_{ x/2  \leq p <  x}
\sum_{\substack{n
\text{ prime}  \\ 
1+M p/2  <  n \leq 1 + 2Mp  \\n 
\equiv 1 ({\rm mod~}p)  }  } \log n
,\end{eqnarray}
where we used that $ \log n
\leq \log (1+ 2Mp ) \leq \log (1+2Mx).$ 
\begin{rem}
\label{rem:1/2bombfriediwa}
{\rm
One now recognizes the argument in the latter sum as a  counting function of primes in 
an arithmetic progression of varying
modulus $p$, as $p$ runs through $[x/2,x).$
We would now use the Bombieri--Vinogradov theorem, however,
the   size of the primes $n$ is of the order of magnitude
$$1+2Mp \approx 2M x,$$
since the moduli $p$ have typical size $x.$
Thus, owing to
the condition $x>8M$,
we are counting primes in a progression
whose modulus exceeds the square-root of the size of the primes.
Therefore, the  Bombieri--Vinogradov theorem
cannot be applied in our case.
To be more precise, it can
 only be applied when the moduli are bounded by $\sqrt{z}/(\log z)^A$, where $A>0$ and $z$ is the length
 of the interval  $(0,z]$
we are counting primes in.
This means that we need
\[p \leq  \frac{\sqrt{ 2Mp }   }{(\log (2Mp))^A},\]
for some fixed $A>0,$ and this can only happen when $x=o(M)$. To deal with this problem 
we shall need a special case 
(Lemma \ref{thm:bomb_frid_iwa} below),
 of the work of
Bombieri, Friedlander and Iwaniec
\cite{MR891581}.
}
\end{rem}

As usual let 
$$  \theta (x;q,a):= 
\sum_{\substack{ p\leq x \\ p \equiv a ({\rm mod~}q) }}
 \log p,\,\,\,\,\psi (x;q,a):= \sum_{n\leq x \atop n \equiv a ({\rm mod~}q) } \Lambda(n),$$
with $\Lambda$ the von Mangoldt function.
\begin{Lem}[Bombieri--Friedlander--Iwaniec \cite{MR891581}]\label{thm:bomb_frid_iwa}For any $t \geq y \geq 3 $ we have $$\sum_{\sqrt{ ty }/2 \leq  q   <   \sqrt{ ty } }\left| \psi(t;q,1)-\frac{t }{\phi(q)}\right|\ll
t\left(\frac{\log y }{\log t }\right)^2 (\log \log t)^B ,$$ where $B$ is an absolute constant and the implied constant is absolute. 
\end{Lem}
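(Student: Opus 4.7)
The strategy is to deduce this from the main results of Bombieri, Friedlander, and Iwaniec \cite{MR891581}, whose dispersion-method work provides estimates for primes in arithmetic progressions with fixed residue beyond the Bombieri--Vinogradov range. My first step would be to reduce the absolute-value sum to a signed sum by inserting $\mathrm{sign}(\psi(t;q,1) - t/\phi(q))$ as coefficients, and then to exhibit for each modulus $q$ in the dyadic window $[\sqrt{ty}/2,\sqrt{ty})$ a well-factorable decomposition $q = q_1 q_2$ with $q_1,q_2$ in convenient ranges, so that the resulting sum falls within the BFI framework.

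The second step is to invoke the appropriate theorem from \cite{MR891581}, which in its classical formulation yields
\begin{equation*}
\sum_{q \leq Q,\ (q,a)=1} \left|\psi(t;q,a) - \frac{t}{\phi(q)}\right| \ll_A t (\log t)^{-A}
\end{equation*}
uniformly for fixed $a \neq 0$ and $Q$ in a range extending up to $t^{1-\delta}$. Here the relevant range $Q \asymp \sqrt{ty}$ exceeds $\sqrt{t}$ once $y > 4$, so we are genuinely past the Bombieri--Vinogradov barrier; matching the dyadic window to the BFI admissible range is what dictates the upper bound $y \leq t$ in the hypothesis.

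The main obstacle will be converting the flexible $(\log t)^{-A}$ saving of the general BFI estimates into the explicit shape $(\log y/\log t)^2 (\log \log t)^B$ claimed in the lemma. I would approach this by tracking the dependence on $Q$ relative to $\sqrt{t}$ in the dispersion argument: the two logarithmic factors $(\log y/\log t)^2$ should emerge from the two applications of Cauchy--Schwarz typical in the method, with $\log y/\log t$ measuring the relative overshoot of the moduli past $\sqrt{t}$; the $(\log \log t)^B$ factor absorbs the usual combinatorial and divisor-sum losses. If simply specializing the statements in \cite{MR891581} does not suffice, the fallback plan is to rerun the dispersion estimate for the specific window $[\sqrt{ty}/2,\sqrt{ty})$, where the balance between modulus size and the fixed residue $a=1$ makes the precise shape of the error natural.
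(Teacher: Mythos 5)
The paper's derivation of this lemma is a one-line specialization: set $a=1$, $x=t$, and $Q=\sqrt{xy}$ in the Main Theorem of \cite{MR891581} (p.\,363), which states that for $x\ge y\ge 3$ and $Q\le \sqrt{xy}$ one has $\sum_{q\le Q,\,(q,a)=1}\bigl|\psi(x;q,a)-x/\phi(q)\bigr|\ll_a x(\log y/\log x)^2(\log\log x)^B$; since the dyadic window $[\sqrt{ty}/2,\sqrt{ty})$ lies inside $[1,Q]$ and $a=1$ is fixed, the stated bound with absolute implied constants is immediate. No well-factorable decomposition, signed-sum manoeuvre, or re-running of the dispersion argument is required.

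Your step 2 cites a result that does not exist: an estimate of the form $\sum_{q\le Q,\,(q,a)=1}\bigl|\psi(t;q,a)-t/\phi(q)\bigr|\ll_A t(\log t)^{-A}$ uniformly for $Q$ up to $t^{1-\delta}$ is essentially the Elliott--Halberstam conjecture and is far beyond what Bombieri--Friedlander--Iwaniec prove. Their theorems that push the level of distribution past $\sqrt{t}$ with arbitrary log savings are for \emph{well-factorable weighted} sums (no absolute values on the discrepancy), while their Main Theorem, which does carry absolute values, has the precise saving $(\log y/\log x)^2(\log\log x)^B$ already built in, with the admissible range $Q\le\sqrt{xy}$. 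Consequently your step 3 is solving a problem that does not arise: there is nothing to ``convert,'' because the $(\log y/\log t)^2(\log\log t)^B$ shape is the statement of the Main Theorem, not a degradation of a stronger $(\log t)^{-A}$ bound. The correct move is simply to identify the right theorem in \cite{MR891581} and plug in; as written, the proposal rests on a false input and the intermediate machinery (sign-insertion, factorization of the moduli) is superfluous for this particular lemma.
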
 
This estimate is obtained on 
setting $a\!=\!1, x\!=\!t$
and $Q\!=\!\sqrt{xy} $ in~\cite[Main Theorem, p.\,363]{MR891581}.
 \begin{Lem}\label{lem:consequence}For any $t \geq y \geq 3 $ with $y \leq t^{1/20}$
 we have $$\sum_{\substack{ q \text{ prime} \\
 \sqrt{ ty }/2 \leq  q   <   \sqrt{ ty } }}
 \left| \theta(t;q,1)-\frac{t }{\phi(q)}\right|
 \ll
t\left(\frac{\log y }{\log t }\right)^2 (\log \log t)^B ,$$ where $B$ is an absolute constant and the implied constant is absolute. \end{Lem}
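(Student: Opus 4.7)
The plan is to deduce this from Lemma~\ref{thm:bomb_frid_iwa} by replacing $\theta$ with $\psi$; since $\psi-\theta$ picks up only proper prime powers, the replacement error will turn out to be negligible on average over primes $q$ in our range.

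First I would fix a prime $q\in[\sqrt{ty}/2,\sqrt{ty})$ and apply the triangle inequality
\[
\Bigl|\theta(t;q,1)-\frac{t}{\phi(q)}\Bigr|
\;\le\;
\Bigl|\psi(t;q,1)-\frac{t}{\phi(q)}\Bigr|
+\bigl(\psi(t;q,1)-\theta(t;q,1)\bigr),
\]
valid because $\psi(t;q,1)\ge\theta(t;q,1)$ by positivity. Summing over primes $q$ in the specified range, the first summand is dominated by the same sum taken over \emph{all} integer moduli $q$ in $[\sqrt{ty}/2,\sqrt{ty})$ (the summand being nonnegative), and Lemma~\ref{thm:bomb_frid_iwa} already supplies the desired bound for this larger sum.

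The heart of the argument is then to control the $\psi-\theta$ contribution. Writing $\psi-\theta$ as a sum over proper prime powers and swapping the order of summation gives
\[
\sum_{\substack{q\text{ prime}\\\sqrt{ty}/2\le q<\sqrt{ty}}}\bigl(\psi(t;q,1)-\theta(t;q,1)\bigr)
\;=\;\sum_{\substack{p^k\le t\\ k\ge 2}}(\log p)\,N(p^k),
\]
where $N(p^k)$ counts the primes $q\in[\sqrt{ty}/2,\sqrt{ty})$ dividing $p^k-1$. If $C$ such primes were to divide $p^k-1$, then $(\sqrt{ty}/2)^C\le p^k-1<t$, forcing $C\le \log t/\log(\sqrt{ty}/2)$, which is bounded by an absolute constant as soon as $y\ge 3$. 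Hence $N(p^k)\ll 1$.

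Consequently the $\psi-\theta$ contribution is at most a constant times $\sum_{k\ge 2}\theta(t^{1/k})\ll \sqrt{t}\,\log t$, which is absorbed into the main bound $t(\log y/\log t)^2(\log\log t)^B$ for $t$ large (bounded $t$ being handled by adjusting the implied constant). I do not foresee a serious obstacle here; the only minor point worth verifying is that the constant bounding $N(p^k)$ is genuinely uniform in $p$, $k$ and $t$ under the standing hypothesis $y\ge 3$, which is immediate from the elementary inequality above. The extra restriction $y\le t^{1/20}$ plays no role in the reduction and is simply inherited from the ambient setting.
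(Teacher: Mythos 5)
Your proof is correct. Its first step coincides with the paper's: bound $|\theta(t;q,1)-t/\phi(q)|$ by $|\psi(t;q,1)-t/\phi(q)|+(\psi-\theta)(t;q,1)$, drop the primality restriction to dominate the first part by the full sum in Lemma~\ref{thm:bomb_frid_iwa}. Where you genuinely diverge is in handling $\sum_q(\psi-\theta)(t;q,1)$. The paper proceeds pointwise in $q$: for each fixed prime $q$ it separates $k\ge 3$ (bounded crudely by $t^{1/3}\log t$) from $k=2$ (bounded by $\ll\sqrt t(\log t)/q+\log t$ using that $x^2\equiv1\pmod q$ has exactly two roots since $q$ is prime), and then sums over $q$, invoking $y\le t^{1/20}$ to absorb the resulting $\sqrt{ty}\,t^{1/3}\log t$. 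You instead swap the order of summation and observe that any fixed prime power $p^k-1<t$ can be divisible by at most $C\le\log t/\log(\sqrt{ty}/2)\le 2$ of the primes $q\ge\sqrt{ty}/2$ under consideration, giving $\sum_q(\psi-\theta)\ll\sum_{k\ge 2}\theta(t^{1/k})\ll\sqrt t\log t$. Your version is cleaner in that it treats all $k\ge 2$ uniformly, yields a sharper error, and — as you correctly note — dispenses with the hypothesis $y\le t^{1/20}$, which the paper needs only for this step; the trade-off is that the paper's bookkeeping ($p\equiv\pm1\pmod q$) is more elementary, whereas yours exploits the size of the moduli via the divisor-product inequality $(\sqrt{ty}/2)^C\le p^k-1$. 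Both exploit the primality of $q$, just through different mechanisms. One small cosmetic point: $\sum_{k\ge 2}\theta(t^{1/k})$ is in fact $\ll\sqrt t$ (the $\log t$ is only needed for the $k\ge3$ tail, which is dominated by $t^{1/3}\log t=o(\sqrt t)$), but $\sqrt t\log t$ is more than sufficient.
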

\begin{proof}
Clearly
\[
\psi(t;q,1)=\theta(t;q,1)+ \sum_{k=2}^{\infty} 
\sum_{\substack{  p \leq t^{1/k} \\ p^k  \equiv 1 ({\rm mod~}q) }}
 \log p
 .\] The inner sum vanishes if $t^{1/k} < 2$, therefore only the integers
 $k \leq (\log t)/\log 2$ need to be taken into account. The contribution of all such integers 
 with $k\geq 3 $ is
$ \ll t^{1/3} \log  t $, since the sum over $p$    is     $\ll  t^{1/k}  $ by the prime number theorem.
 The steps so far are the standard arguments that one performs when moving from asymptotics for $\psi$
 to asymptotics for $\theta$, however, in our case, owing to the level of distribution 
 being comparable to the square root of the length of the interval, the term $k=2$ cannot be controlled with 
 the classical arguments.
 Instead, we use the bound  
 \[
 \frac{1}{\log t}\sum_{\substack{  p \leq \sqrt{t} \\ p^2  \equiv 1 ({\rm mod~}q) }}
 \log p
 \leq 
\sum_{\substack{ m \leq \sqrt{t} \\ m ^2  \equiv 1 ({\rm mod~}q) }}1
=
\sum_{\substack{ m \leq \sqrt{t} \\ m   \equiv -1 ({\rm mod~}q) }}1 
+
\sum_{\substack{ m \leq \sqrt{t} \\ m    \equiv 1 ({\rm mod~}q) }} 1
 , \]
  where we used the fact that $q$ is prime. Each of the sums in the right side 
 is trivially 
 $\ll \sqrt{t} /q + 1$ and therefore \[\sum_{\substack{  p \leq \sqrt{t} \\ p^2  \equiv 1 ({\rm mod~}q) }}
 \log p 
 \ll
 (\log t ) \left( \frac{\sqrt{t} }{q}+1\right).\]
 We thus find that 
 \[\psi(t;q,1)=\theta(t;q,1)+O\left(
  t^{1/3}(  \log t) 
  +
 \frac{\sqrt{t} }{q}   
 \log t  
  \right)
. \]
 This
 shows that the sum over $q$ in the statement of this lemma is  
\[ 
 \ll
 \sum_{ 
 \sqrt{ ty }/2 \leq  q   <   \sqrt{ ty }  }
 \left| \psi(t;q,1)-\frac{t }{\phi(q)}\right|
 +\sum_{ 
 \sqrt{ ty }/2 \leq  q   <   \sqrt{ ty } }
\left(
  t^{1/3}(  \log t)
  +
 \frac{\sqrt{t} }{q}   
 \log t  
  \right)
.\]
The first sum can be bounded by  
 Lemma~\ref{thm:bomb_frid_iwa}.
Noting that
$\sum_{x/2<q\le x}1/q=O(1),$
cf.\ \eqref{reciprocal}, we see that
the second sum is
 \[
 \ll
 \sqrt{ty} \,t^{1/3} (\log t ) + \sqrt{t} \log t, 
 \]
 which is $\ll t^{19/20} \ll t (\log t )^{-2},$ as $y \leq t^{1/20}$.
\end{proof}

\begin{Lem} \label{thm:corll}
Let $\psi: (1,\infty) \to (4,\infty) $
be any function satisfying
$\psi(M) \leq \log M$.
For any $M>1,$ we let $x= M \psi(M)$ and have
\[\sum_{M/4< m\le M} \pi_m(x)
\geq
\frac{M x}{
 2
\log (M x) }
\frac{\log 2 } {\log x }
\left\{1+
O\left(
\frac{(\log \log x)^{B+2}}{\log x}\right)
\right\}
,\] where $B$ is the absolute constant from  Lemma~\ref{lem:consequence}.
 \end{Lem}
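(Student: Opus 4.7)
The plan is to combine the chain \eqref{eq:koukou} with Lemma~\ref{lem:consequence}. The chain already yields
\[
\sum_{M/4<m\le M}\pi_m(x)\ \ge\ \frac{S}{\log(1+2Mx)},\qquad S:=\sum_{\substack{p\text{ prime}\\x/2\le p<x}}\bigl[\theta(1+2Mp;p,1)-\theta(1+Mp/2;p,1)\bigr],
\]
so it remains to produce a suitable lower bound for $S$.

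For the main term, write $\theta(T;p,1)=T/\phi(p)+E(T,p)$ and ignore $E$. The resulting contribution to $S$ is
\[
\sum_{x/2\le p<x}\frac{(1+2Mp)-(1+Mp/2)}{\phi(p)}=\frac{3M}{2}\sum_{x/2\le p<x}\frac{p}{p-1}=\frac{3M}{2}\bigl(\pi(x)-\pi(x/2)\bigr)(1+o(1)),
\]
which by the prime number theorem equals $\tfrac{3Mx}{4\log x}(1+o(1))$. Division by $\log(1+2Mx)\sim \log(Mx)$ then delivers $\tfrac{3Mx}{4\log x\log(Mx)}(1+o(1))$; since $3/4>(\log 2)/2$, this exceeds the target of the lemma with enough slack to absorb an error of the claimed size $O((\log\log x)^{B+2}/\log x)$.

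The bulk of the work is bounding the aggregate error $\sum_{x/2\le p<x}\bigl(|E(T_p,p)|+|E(T_p',p)|\bigr)$, where $T_p=1+2Mp$ and $T_p'=1+Mp/2$. I would apply Lemma~\ref{lem:consequence} with $t:=2Mx$ and $y:=x/(2M)=\psi(M)/2$; then $[\sqrt{ty}/2,\sqrt{ty})=[x/2,x)$ exactly, the hypothesis $\psi(M)\le \log M$ ensures $y\le t^{1/20}$, and the lemma yields
\[
\sum_{x/2\le p<x}\Bigl|\theta(t;p,1)-\tfrac{t}{\phi(p)}\Bigr|\ \ll\ \tfrac{Mx\,(\log\log M)^{B+2}}{(\log M)^2}.
\]
The remaining obstacle, which I expect to be the hardest step, is that the lemma fixes a single $t$ while we need estimates for $\theta(T;p,1)$ with $T=T_p$ or $T_p'$ depending on $p$. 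I would resolve this via the decomposition
\[
\theta(T_p;p,1)-\tfrac{T_p}{\phi(p)}=\Bigl[\theta(t;p,1)-\tfrac{t}{\phi(p)}\Bigr]-\Bigl[\bigl(\theta(t;p,1)-\theta(T_p;p,1)\bigr)-\tfrac{t-T_p}{\phi(p)}\Bigr]
\]
and bounding the new error in the second bracket by re-applying (a variant of) Lemma~\ref{lem:consequence} to the sum counting primes in the shorter interval $(T_p,t]$ in arithmetic progressions modulo $p$, with parameters $t',y'$ chosen analogously so that the level-of-distribution range again covers $[x/2,x)$.

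Putting together the main term and the two error contributions gives $S=\tfrac{3Mx}{4\log x}\bigl(1+O((\log\log x)^{B+2}/\log x)\bigr)$, and dividing by $\log(1+2Mx)$ yields the inequality of the lemma.
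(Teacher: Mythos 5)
Your proposal keeps the $p$-dependent intervals $(1+Mp/2,\,1+2Mp]$ and tries to estimate $\theta(T;p,1)$ at $p$-varying heights $T=T_p$, $T'_p$. This is the hard point, and you flag it, but the fix you propose does not close the gap: the quantity in your second bracket, $\theta(t;p,1)-\theta(T_p;p,1)-\frac{t-T_p}{\phi(p)}$, still counts primes in the interval $(T_p,t]$ whose left endpoint varies with $p$, so ``re-applying a variant of Lemma~\ref{lem:consequence}'' just reproduces the original obstruction. Lemma~\ref{lem:consequence} is intrinsically a statement at a single fixed height $t$ with moduli tied to $\sqrt{ty}$; there is no direct way to feed it a family of $p$-dependent right endpoints, and a dyadic or gridding workaround would need careful accounting (the spread of $T_p$ as $p$ ranges over $[x/2,x)$ is of order $Mx$, i.e.\ not small) that your sketch does not supply.

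The paper avoids this difficulty with a single observation you are missing: since we only need a \emph{lower} bound, and for every $p\in[x/2,x)$ the \emph{fixed} interval $(1+Mx/2,\,1+Mx]$ is contained in $(1+Mp/2,\,1+2Mp]$, one may shrink the inner sum in \eqref{eq:koukou} to this fixed interval. After this step the heights $1+Mx/2$ and $1+Mx$ no longer depend on $p$, and Lemma~\ref{lem:consequence} (with $t=Mx$, $y=\psi(M)$, so $\sqrt{ty}=x$) applies directly. This is also where the specific constant in the statement comes from: the fixed interval has length $Mx/2$ rather than $3Mp/2$, and combined with $\sum_{x/2<p\le x}\frac{1}{p-1}=\frac{\log 2}{\log x}(1+O(1/\log x))$ (equation \eqref{reciprocal}) it gives precisely the factor $\frac{Mx}{2}\cdot\frac{\log 2}{\log x}$, not the $\frac{3Mx}{4\log x}$ you compute. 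Your larger main term is a symptom of not having performed the interval shrinkage; the lemma's constant reflects what the method actually proves.
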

\begin{proof}
The condition  
$p\in [x/2,x)$ in the
definition of $\pi_m(x)$ ensures that the interval 
$ (1+Mx/2, 1+M x ] $ is contained in the interval $ (1+Mp/2, 1+2M p ] $. Therefore, by  \eqref{eq:koukou} 
we see that the sum in our lemma is at least 
\[
\frac{1}{\log (1+2Mx) }
\sum_{ x/2 \leq  p  <  x }
\sum_{\substack{n  \text{ prime}  \\ 
1+Mx/2  <  n\leq  1+Mx
 \\ n\equiv 1 ({\rm mod~}p)  }  } \log n  . 
\]
Using Lemma \ref{lem:consequence}
with
$t=Mx$ and $y=\psi(M) $
shows that this is
\begin{equation*}
\frac{
  (1+Mx)-(1+M\frac{x}{2})   
 }{\log (1+2Mx) }
\sum_{ x/2 \leq  p  <   x}
\frac{1}{p-1}
+O\left(
\frac{Mx}{ \log( Mx) }
\left(
\frac{\log \psi(M) }{\log x}
\right)^2
(\log \log x )^B
\right).
\end{equation*}
Using the standard estimate
$$\sum_{p  \le  x } \frac{1}{p-1}= \log \log x +C'+O\left(\frac{1}{(\log x )^2}\right),$$
we obtain
\begin{equation}
\label{reciprocal}
\sum_{x/2<p\leq x } \frac{1}{p-1}
= 
\frac{\log 2 } {\log x }
\Big
\{1+
O\left(\frac{1}{\log x }\right)\Big\}.
\end{equation}
It follows that the main term is as claimed
in our lemma.
Furthermore, on using the bound $ \log\psi(M) \ll \log\log M \ll \log\log x,$
we see that
the error term is
\[\ll \frac{Mx}{ \log (Mx )  } 
\frac{  (\log \log x )^{B+2}  }{( \log x )^2}
,\]
as required.
\end{proof}

\begin{proof}[Proof of Theorem \ref{thm:main3}]
The first assertion is a corollary of Lemma \ref{withWilms}.

The inequalities obtained in
Lemmas \ref{lem:classical sieve}
 and \ref{thm:corll} with $\psi(M)=9$
in combination with the inequality in 
Lemma \ref{lem:dio} give rise, on
choosing $x= 1+8  M,$   
to  the inequality 
\[\#\{m \in G(1+8M) 
 \cap (M/4,M]  
 \}\,
64 C_1 C_2^2 M \frac{x^2 }{(\log x )^4}
 \geq\!
\left(
\frac{Mx}{\log (M x) } 
\frac{\log 2 } {2
\log x }
\right)^2\!\!\!
(1+o(1)).
\] 
In particular, the estimate $\log(Mx ) \leq 2 \log x $ yields
\[
 \#\{m \in G(1+8M) 
 \cap (M/4,M]  
 \}\, 
 \geq c' M 
(1+o(1) )
,\] where  \begin{equation*}
c'=\frac{(\log 2 )^2}{ 
1024
C_1   C_2^2  }>0.
\end{equation*}
Suppose that $m \in G(1+8M) 
 \cap (M/4,M]$. Note that since $M/4< m   $,  we have 
  $$p\leq x =1+8M <1+ 32 m,
  $$ and hence $p< 32 m $, therefore,
  the set $G(1+8M)  $ is contained in $G$.  
  We conclude that \eqref{goal}
holds with $c_0=c'$.
It follows that a positive proportion 
of all integers $m$ have the property that
there exists a prime $p>4m $ with also $1+2mp $ being a
prime. 
By Lemma \ref{withWilms}  we have $1+2m\in {\cal A}_t$ for 
each of those $m$, and it thus follows that unconditionally ${\cal A}_t$ contains a positive fraction of 
all odd natural numbers.
\end{proof}
\begin{rem} 
\label{rem:20m}
{\rm
The proof actually yields that 
a positive proportion 
of all integers $m$ have the property that
there exists a prime $4m<p < 32 m$ with also $1+2mp $ being a prime. This is what we will use in
the proof of Theorem \ref{thm:bound}}.
\end{rem}

\section{Some related issues}
\label{sec:referee}
\subsection{Estimating the smallest 
$n$ for which $A(n)=h$}
\begin{Def}
Given a natural number 
$h$, let $n_h$ be the smallest ternary integer, 
if it exists, such that $A(n_h)=h$. 
\end{Def}
The entries in the column $k/\varphi(pqr)$ in Table 1 suggest
the following question.
\begin{Quest} Let $h>1$ be
an integer. Does there exist an absolute constant
$0<c\le 1/2$ such that if $|a_n(k)|=h$, then $k>c\,\varphi(n_h)$?
\end{Quest}
A further 
question is to relate the size of
$n_h$ to $h$. 
See the final column of Table 1 for some numerical data. The 19th century estimate $A(pqr)\le p-1$ implies that $n_h\gg h^3$.
\begin{Con} 
\label{conjnh}
There are constants $E_1$ and $E_2$ such that
$h^{E_1}\ll n_h\ll h^{E_2}$ and $E_1\ge 3$.
\end{Con}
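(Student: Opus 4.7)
The plan treats the lower and upper bounds by very different techniques. For the lower bound $n_h \gg h^3$ I would invoke the 19th century inequality $A(pqr) \leq p - 1$ recalled just before the conjecture: if $A(n_h) = h$ with $n_h = pqr$ and $p < q < r$ distinct odd primes, then $p \geq h + 1$, and since $q, r$ are odd primes exceeding $p$, also $q, r \geq h + 1$. Hence $n_h = pqr \geq (h+1)^3 > h^3$, so the lower bound holds with $E_1 = 3$ for every $h$ for which $n_h$ is defined.

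For the upper bound $n_h \ll h^{E_2}$ the idea is to produce explicit ternary $n = pqr$ with $A(n) = h$ of polynomial size in $h$ via Theorem \ref{t.Eugenia}. Given $h$, write $h = (p+1)/2 + m$; the hypotheses of that theorem require $p$ prime with $p \geq 4m^2 + 2m + 3$, which together with $p = 2h - 2m - 1$ forces $m \leq (\sqrt{2h-4}-1)/2$. I would therefore search for a prime $p$ in the short interval $[2h - \sqrt{2h-4} + 1,\, 2h - 1]$; once one is found, the corresponding $m$ is admissible. Theorem \ref{t.Eugenia} then supplies primes $q$ lying in a specified residue class modulo $p$ and $r$ in a specified residue class modulo $pq$ with $A(pqr) = h$. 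Bounding the smallest admissible $q$ and $r$ by Linnik's theorem on the least prime in an arithmetic progression gives $q \ll p^L$ and $r \ll (pq)^L$, so $n_h = pqr \ll p^{1 + L + L(L+1)} \ll h^{E_2}$ with $E_2 = O(L^2)$, $L$ being Linnik's constant.

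The principal obstacle is locating the prime $p$ in a very short interval, of length only about $\sqrt{2h}$; this lies beyond the reach of the Riemann Hypothesis (which only secures primes in intervals of length $\sqrt{h}\log h$), though it would follow from Conjecture \ref{Pi.Andr}. Unconditionally this step is open, so a complete proof of the upper bound would have to invoke Conjecture \ref{Pi.Andr} or comparable strong prime gap conjectures, and supplement with ad hoc constructions for exceptional heights in the spirit of the argument used for $h=63$ in Lemma \ref{l:1563}. A secondary difficulty is that the numerical exponent produced by this approach depends badly on Linnik's constant $L$; achieving an $E_2$ close to the modest value suggested by Table 1 would require substantially sharper bounds on both prime gaps and least primes in arithmetic progressions, and quite likely a refinement of Theorem \ref{t.Eugenia} itself, giving polynomial control on the smallest admissible $q_1, r_1$ in terms of $p$.
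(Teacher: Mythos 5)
The statement you have been asked about is a \emph{conjecture}, not a theorem; the paper does not prove it, and indeed states immediately before it that ``The 19th century estimate $A(pqr)\le p-1$ implies that $n_h\gg h^3$'' and immediately after that ``Theorem \ref{thm:bound} shows that for a positive fraction of integers $h$ the upper bound in the conjecture holds true.'' So what you should be graded on is whether you correctly identify what is proved, what is open, and why.

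Your lower bound argument is correct and is exactly what the paper has in mind: $A(pqr)\le p-1$ forces $p\ge h+1$, and since $p<q<r$ are distinct odd primes, $q,r> h+1$, giving $n_h\gg h^3$. Your upper bound discussion is also essentially correct as an assessment of the state of affairs: to realize a given height $h$ via Theorem \ref{t.Eugenia} one must write $h=(p+1)/2+m$ with $4m^2+2m+3\le p$, which indeed forces $p$ to lie in a window of length roughly $\sqrt{2h}$ below $2h$, and the existence of a prime there for \emph{every} $h$ is beyond current technology (even under RH). This is precisely the obstruction that prevents the upper bound from being a theorem, and it matches the reason the paper can only prove it with exceptions: Theorem \ref{thm:bound2}, which is the in-paper analogue of your Linnik-based construction (with Lemma \ref{generalLinnik} in place of classical Linnik to handle the constraints $q>p$, $r>pq$), concludes only that $t_h<ch^T$ for all $h\le x$ outside an exceptional set of size $\ll_\epsilon x^{3/5+\epsilon}$.

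One thing worth noting that you do not mention: the paper's Theorem \ref{thm:bound} takes a genuinely different route to the upper bound, via Lemma \ref{withWilms} (Gallot--Moree--Wilms) rather than Theorem \ref{t.Eugenia}. There one realizes an odd height $h=1+2m$ by finding a prime $p>4m$ with $q=1+(h-1)p$ prime; the required short-interval prime is replaced by the Bombieri--Friedlander--Iwaniec machinery of Section 4, which yields the upper bound $n_h\ll_\epsilon h^{3(L+1+\epsilon)}$ for a positive proportion of odd $h$ (and not ``almost all''). So the two partial results in the paper trade off: the Eugenia-route handles almost all $h$ (optimal ternary) at the cost of an unspecified exponent $T$, while the Wilms-route gives an explicit exponent in terms of Linnik's constant but only a positive density. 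Your proposal is faithful to the Eugenia-route; you correctly diagnose it as conditional for full coverage, but you miss that the paper's positive-density result sidesteps the short-interval issue entirely by a different construction.
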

Theorem \ref{thm:bound} shows that for a positive fraction
of integers $h$ the upper bound 
in the conjecture holds true.
Its formulation involves Linnik's constant $L$.

\begin{Def}
Let $r\ge 0$ be an arbitrary fixed real number. For coprime integers 
$a$ and $d$, let $p_r(a,d)$ denote
the smallest prime $>d^r$ in the progression $a\,({\rm mod~}d)$.
\end{Def}
Linnik  proved  in 1944 that there exist positive constants $C$ and $L$ such that
$p_0(a,d)\le C\,d^L$.
The constant $L$ is known as 
\emph{Linnik's constant}. 
Xylouris \cite{Xylouris} proved that 
$L\le 5$, heavily relying on a fundamental paper by Heath-Brown \cite{HBLinnik}, who obtained $L\le 5.5$. On GRH Lamzouri 
et al.\ \cite{Lam} showed that $p(a,d)\le (\varphi(d)\log d)^2$ 
for $d>3$.

\par The following result generalizes Linnik's theorem.
\begin{Lem}
\label{generalLinnik}
Let $r>0$ be a real number. For coprime integers 
$a$ and $d$, let $p_r(a,d)$ denote
the smallest prime $>d^r$ in the progression $a\,({\rm mod~}d)$. Then there exists some absolute constant $C$ such that 
$p_r(a,d)\ll d^{r+C}$, where the implied constant 
is also absolute.
\end{Lem}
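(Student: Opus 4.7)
The plan is to deduce Lemma \ref{generalLinnik} from a quantitative form of Linnik's theorem, namely the lower bound
\begin{equation}
\label{eq:qlinnik}
\pi(x;d,a)\gg \frac{x}{\varphi(d)\log x}, \qquad x\ge d^L,
\end{equation}
valid for every pair of coprime integers $a,d$ with $d\ge 2$ and some absolute constant $L>0$, with an absolute implied constant. This lower-bound form of Linnik's theorem follows from Linnik's original arguments combined with the log-free zero-density estimates that underpin Heath-Brown's \cite{HBLinnik} and Xylouris' \cite{Xylouris} bounds on $L$.

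Granting \eqref{eq:qlinnik}, I would set $C:=L+2$ and show that for every sufficiently large $d$ the interval $(d^r,d^{r+C}]$ contains a prime in the residue class $a\,({\rm mod~}d)$. Applying \eqref{eq:qlinnik} at $x=d^{r+C}$ (which exceeds $d^L$ since $r>0$) gives
\[
\pi(d^{r+C};d,a)\gg \frac{d^{r+C}}{\varphi(d)\,(r+C)\log d}.
\]
For the upper bound on $\pi(d^r;d,a)$ I would split on whether $d^r>d$: in that case Brun--Titchmarsh yields $\pi(d^r;d,a)\ll d^r/\varphi(d)$, while in the complementary range $d^r\le d$ one has the trivial estimate $\pi(d^r;d,a)\le 1$ (a single residue class modulo $d$ cannot contain two primes below $d$). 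In either case the extra factor $d^C/\log d$ in the lower bound comfortably dominates, so $\pi(d^{r+C};d,a)-\pi(d^r;d,a)>0$, producing a prime $p\in(d^r,d^{r+C}]$ with $p\equiv a\,({\rm mod~}d)$ and hence $p_r(a,d)\le d^{r+C}$. The finitely many small $d$ not covered by this argument are absorbed into the implied constant using Dirichlet's theorem.

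The main technical obstacle is isolating the right off-the-shelf input: the statement one usually finds in the literature is the non-quantitative bound $p_0(a,d)\ll d^L$, but the density lower bound \eqref{eq:qlinnik} is a direct byproduct of the same zero-density machinery and is used implicitly throughout \cite{HBLinnik}. Once that input is secured, the rest is a routine comparison of main and error terms that is uniform in $r>0$, and in principle the constant $C$ can be made explicit as $C=7$ via Xylouris' bound $L\le 5$.
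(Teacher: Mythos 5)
Your overall strategy is the same as the paper's: invoke a quantitative form of Linnik's theorem that lower-bounds the number of primes $\le x$ in the progression $a\,({\rm mod}\ d)$ for $x\ge d^{\text{const}}$, then compare with an upper bound on the count of those primes $\le d^r$ and conclude there must be one in $(d^r,d^{r+C}]$. However, the key input \eqref{eq:qlinnik} as you have stated it is not an available theorem: the strongest unconditional lower bound in the literature carries a $\sqrt{d}$ degradation. The result the paper uses is Iwaniec--Kowalski \cite[Corollary 18.8]{IK}, which gives $\psi(x;d,a)\gg x/(\varphi(d)\sqrt{d})$ for $x\ge d^{L_1}$; the extra $\sqrt{d}$ is exactly the price of a possible Siegel zero (if $\beta_1=1-\delta$ and $\chi_1(a)=1$, the main term $1-x^{\beta_1-1}/\beta_1$ is only $\gg\delta\log x$, and effectively one only knows $\delta\gg d^{-1/2}(\log d)^{-2}$). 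Heath-Brown's and Xylouris's works bound the \emph{least} prime in a progression; they do not supply the clean $\pi(x;d,a)\gg x/(\varphi(d)\log x)$ lower bound uniformly for all $x\ge d^L$, so that estimate cannot be taken as a black box, and your suggested value $C=L+2\le 7$ is accordingly too optimistic.

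The good news is that your comparison argument is robust to this weakening: with $\psi(x;d,a)\gg x/d^{3/2}$ for $x\ge d^{L_1}$ in place of \eqref{eq:qlinnik}, the same calculation shows $\psi(d^{r+C};d,a)>\psi(d^r;d,a)$ once $C$ is taken a bit larger (enough to absorb an extra $d^{3/2}$), and after passing from $\psi$ to $\theta$ to kill prime powers. That is exactly what the paper does: it uses $\varphi(d)\le d$ to get $\psi(x;d,a)\gg x/d^{3/2}$, estimates $\psi-\theta\ll\sqrt{x}\le x^{3/4}d^{-3/2}$ for $x>d^6$ to transfer this to $\theta$, and then compares with the cheap bound $\theta(d^r;d,a)\le\theta(d^r)<2d^r$ (since the $\sqrt{d}$ loss already dwarfs any saving from Brun--Titchmarsh, your sharper upper bound buys nothing here). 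This yields $p_r(a,d)\ll d^{r+L_1+6}$. So: replace \eqref{eq:qlinnik} with \cite[Corollary 18.8]{IK}, insert the $\psi$-to-$\theta$ step, and enlarge $C$ by the lost power of $d$; then your proof is correct and coincides with the paper's.
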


\begin{proof} We use Corollary 18.8 of the book of Iwaniec and Kowalski \cite{IK}. It states that there exists an explicit effectively computable constant
$L_1>0$ such that for all sufficiently large $d$ and all $x\geq d^{L_1}$ we have $$\psi(x;d,a) \gg \frac{x}{\varphi(d) \sqrt{d} } ,$$ where the implied constant is absolute.
Since $\varphi(d) \leq d $, this implies that  $$\psi(x;d,a) \gg \frac{x}{d^{3/2}  } .$$ For all $x>d^6$ we have 
$\sqrt{x} \leq x^{3/4} d^{-3/2}$ and hence, 
$$
\psi(x;d,a)-\theta(x;d,a)
\leq  
\psi(x )-\theta(x )\ll \sqrt{x}  \leq \frac{ x^{3/4} }{d^{3/2}}
.$$ 
Therefore,  if $x > d^{L_1+6}$ we deduce that 
 $$\theta(x;d,a) \gg \frac{x}{d^{3/2}  } ,$$ where the implied constant is absolute. To conclude our proof we note that
$p_r(a,d)$ is bounded by any real number  
 $x> d^r$   which satisfies 
 $$ \theta(x;d,a)
> \theta(d^r;d,a).$$
  Since $\theta(d^r;d,a) \leq \theta(d^r)
 <2 d^r$ by the prime number theorem, it suffices to find the least $x>d^r$ for which $  \theta(x;d,a)
\geq  2 d^r $. Clearly, this holds as long as $x>d^{L_1+6} $ and 
$x\,d^{-3/2} > C d^r $ for some large constant $C$. 
For both of these properties to hold it is sufficient that  $x\gg d^{r+6+L_1}$, from which we infer that 
$$
p_r(a,d) \ll d^{r+6+L_1}
,$$
with an absolute implied constant.
\end{proof}

The next result makes some progress
towards Conjecture \ref{conjnh}. It requires
only Linnik's theorem for its proof. Under
GRH the estimate holds with $L=2$.
\begin{Thm}
\label{thm:bound}
Let $\epsilon>0$.
Let $n_h$ be the smallest ternary integer, 
if it exists, such that $A(n_h)=h$.
There exists a constant $c_{\epsilon}>0$ such that  $n_h<c_{\epsilon}\,h^{3(L+1+\epsilon)}$ for 
a positive proportion of the odd natural numbers $h$.
\end{Thm}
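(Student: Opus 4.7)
\noindent\textbf{Proof plan for Theorem \ref{thm:bound}.}
The plan combines Remark~\ref{rem:20m} with Lemma~\ref{withWilms} and a Linnik-type bound applied to the modulus $pq$. By Remark~\ref{rem:20m}, a positive proportion of integers $m$ admit a prime $p\in(4m, 32m)$ such that $1+2mp$ is also prime. For each such $m$ I set $h:=2m+1$, so that $h$ is odd; then $p>4m$ forces $p\geq 4m+1 = 2h-1$, and $q:=1+2mp = 1+(h-1)p$ is prime by assumption. Lemma~\ref{withWilms} then tells us that $A(pqr_1)=h$ for any prime $r_1>q$ satisfying $r_1(p+q)/2 \equiv 1 \pmod{pq}$; taking $r_1$ to be the smallest such prime yields $n_h \le pqr_1$.

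Next I control sizes. The inequalities $p< 32m<16h$ and $q<1+64m^2\ll h^2$ give $pq\ll h^3$. Since $p$ and $q$ are distinct odd primes, $\gcd((p+q)/2, pq)=1$, so the inverse $a:=((p+q)/2)^{-1}\pmod{pq}$ defines a genuine reduced residue class modulo $d:=pq$, and $r_1$ is the smallest prime $>q$ in the arithmetic progression $a\pmod d$.

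The heart of the argument is bounding this $r_1$. Linnik's theorem yields that the least prime $\equiv a\pmod d$ is $\ll d^{L}$, but we specifically need a prime exceeding $q$. Since $q<d$, the interval $[1,q]$ meets the residue class $a\pmod d$ in at most one integer, hence contains at most one prime of the progression; so the \emph{second} smallest prime in the progression already does the job. Upgrading Linnik's theorem to the density statement
\[
\pi(x;d,a)\;\gg\;\frac{x}{\varphi(d)\log x}\qquad \text{for }x\gg d^{L},
\]
which is an intrinsic output of Linnik's method, guarantees at least two primes of the progression in $[1,c\,d^{L+\epsilon}]$ once $d$ is large, and hence $r_1 \ll (pq)^{L+\epsilon}\ll h^{3(L+\epsilon)}$. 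Multiplying with $pq\ll h^3$ gives
\[
n_h \;\le\; pq\cdot r_1 \;\ll\; h^{3}\cdot h^{3(L+\epsilon)} \;=\; h^{3(L+1+\epsilon)}.
\]
Since $h=2m+1$ ranges over a positive-density subset of the odd integers as $m$ runs through the density set from Remark~\ref{rem:20m}, the claim follows.

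The main obstacle is the technical step of upgrading the raw Linnik bound (smallest prime in the progression) to a bound on the smallest prime \emph{exceeding} $q$; this is where the extra $\epsilon$ in the exponent comes from, and it is precisely what the density version of Linnik's theorem provides at essentially no extra cost. Under GRH the same scheme works with $L=2$, since the explicit bounds of Lamzouri et al.\ quoted above yield $r_1\ll (pq)^{2+\epsilon}$ directly.
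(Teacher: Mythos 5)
Your argument follows the same overall strategy as the paper — a positive proportion of $m$ admit a prime $p\in(4m,32m)$ with $q=1+2mp$ prime (Remark~\ref{rem:20m}), set $h=2m+1$, note $p\geq 2h-1$, and combine Lemma~\ref{withWilms} with a Linnik-type bound on the admissible prime $r_1$ — but it handles the one delicate point, ensuring $r_1>q$, in a genuinely different way. The paper never touches the density form of Linnik in this proof: if $r_1\in(0,pq)$ is the unique residue and $r_1$ is even, then the least prime $\equiv r_1\pmod{pq}$ is automatically $\geq r_1+pq>q$ (the first candidate $r_1$ itself being even), while if $r_1$ is odd the authors multiply the modulus by a tiny auxiliary prime $s\nmid r_1+pq$ (necessarily $s\ll\log(pq)<(pq)^\delta$) and take the least prime $\equiv r_1+pq\pmod{spq}$, which is $\geq r_1+pq>q$ by construction. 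In both cases the raw Linnik theorem, $p_0(a,d)\ll d^L$, suffices, and the $\epsilon$ comes from $s\ll(pq)^\delta$.

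You instead keep the modulus $pq$ and appeal to a density version of Linnik's theorem ($\pi(x;d,a)\gg x/(\varphi(d)\log x)$ for $x\gg d^{L}$) to reach the \emph{second} prime of the progression. This is a sound idea, but be careful about the constant: the exponent in the density statement (the paper's $L_1$ in Lemma~\ref{generalLinnik}, via Iwaniec–Kowalski Cor.~18.8) is \emph{a priori} a different constant from the Linnik constant $L$ defined by the least-prime statement, which is what appears in the theorem's conclusion. In the proofs of record (Heath-Brown, Xylouris) the least-prime bound is indeed obtained as a byproduct of a density estimate with the same exponent, so your claim that this is ``an intrinsic output of Linnik's method'' is defensible, but it deserves a precise citation rather than an assertion; as written, there is a gap between ``second prime $\ll d^{L_1+\epsilon}$'' and the target exponent $L$. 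The paper's modulus-shift trick sidesteps this entirely, which is why it is the cleaner route. Also, your $\epsilon$ is not really ``where the cost of reaching the second prime comes from'': once the density form applies you get far more than two primes, so the $\epsilon$ there is pure slack, whereas in the paper it absorbs the factor $s^{L}$ from the enlarged modulus.
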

\begin{proof}
Let $m$ be an integer 
such that there exists a prime $4m<p<32m$ 
with
also $q:=1+2mp$ being a prime. For any 
such $m$ we will show that $h:=1+2m\in \mathcal A_t$ and construct a ternary $n$ such that $A(n)=h$ and 
$n$ satisfies the required upper bound. Since, as we have
seen in the proof of Theorem \ref{thm:main3} (cf.\ Remark \ref{rem:20m}), there is a positive proportion of such $m$, the result follows.

We let $0<r_1<pq$ be the unique solution of
$r_1(p+q)/2\equiv 1\,({\rm mod~}pq)$.
If $r_1$ is even, we put
$r=p_0(r_1,pq)$. Note that $r>pq$. If $r_1$ is
odd, we let $s$ be the smallest prime
not dividing $r_1+pq$. 
Let $\delta>0$ be arbitrary.
Since the product of the primes not exceeding $x$ is of 
size $e^{(1+o(1))x}$, we conclude that $s<(pq)^{\delta}$
for all $m$ large enough. Observe that 
$r_1+pq$ and $spq$ are coprime. We
put $r=p_0(r_1+pq,spq)$. Note that $r>q$.
By Linnik's theorem we have
$r\le C(pq)^{(1+\delta)L}$.
By Lemma \ref{withWilms} we have $A(pqr)=h$.
Since $pqr>n_h$ and 
$$pqr=O(m\cdot m^2\cdot (m^{3(1+\delta)})^{L})=
O(h\cdot h^2\cdot (h^{3(1+\delta)})^{L})= O(h^{3(L+1+\epsilon)}),$$
with $\epsilon=\delta L$, the proof is completed.
\end{proof}
The next result can be seen as
a supplement to Theorem \ref{thm:main2}. The proof requires
Lemma \ref{generalLinnik} and a more precise version of Theorem
\ref{t.Eugenia} that is too long to be formulated here.
\begin{Thm}
\label{thm:bound2}
Let $t_{h}$ be the smallest 
optimal
ternary integer, if it exists, such
that $A(t_{h})=h$.
There exist positive constants $c$ and
$T$ such that  $t_{h}<c\,h^{T}$ for all $h\le x$ with at most
$\ll_{\epsilon} x^{3/5+\epsilon}$
exceptions.
\end{Thm}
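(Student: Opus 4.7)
The plan is to combine the analysis of the exceptional set from Section \ref{sec:gaps} with a quantitative form of the Moree--Ro\c su construction and Lemma \ref{generalLinnik}.

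First I would restrict attention to the set $\mathcal R$ defined in \eqref{R}. By Lemma \ref{l:Aopt} every $h\in\mathcal R$ lies in $\mathcal A_{opt}$, so the problem reduces to bounding the size of the smallest optimal ternary witness of such an $h$. Arguing as in the proof of Theorem \ref{thm:main2} (combining Lemma \ref{lem:Ex} with Lemma \ref{t.HB}), the number of $h\le x$ with $h\notin\mathcal R$ is $\ll_{\epsilon} x^{3/5+\epsilon}$; these furnish the allowed exceptions. For $h\in\mathcal R\cap[1,x]$, unpacking \eqref{R} allows me to fix a prime $p$ and integer $m\ge 0$ with $h=(p+1)/2+m$ and $4m^2+2m+3\le p$, which forces $p\asymp h$ and $m\ll\sqrt{h}$.

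The second step is to invoke the stronger form of Theorem \ref{t.Eugenia}, namely Theorem 3.1 of \cite{Eugenia}, which, given such a pair $(p,m)$, exhibits an optimal ternary $pqr$ with $A(pqr)=h$ by selecting $q$ as a prime in an explicit arithmetic progression $a_1\,({\rm mod}\,d_1)$ whose modulus $d_1$ is polynomial in $p$, and then $r$ as a prime in a further progression $a_2\,({\rm mod}\,d_2)$ with $d_2$ polynomial in $pq$, subject to a mild lower size condition such as $r>pq$. Choosing $q$ to be the smallest such prime and applying Lemma \ref{generalLinnik} with a suitable exponent parameter $r_0>0$ yields $q\ll p^{O(1)}$; a second application of Lemma \ref{generalLinnik} with a large enough exponent parameter to enforce $r>pq$ yields $r\ll (pq)^{O(1)}\ll p^{O(1)}$. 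Consequently $t_h\le pqr\ll p^{T}\ll h^{T}$ for some absolute constant $T$.

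The main obstacle will be the bookkeeping required to reconcile the parameters of the precise Moree--Ro\c su construction with the hypotheses of Lemma \ref{generalLinnik}: one must verify coprimality of the prescribed residue with its modulus, confirm that the modulus is genuinely polynomial (rather than super-polynomial) in $p$, and check that the Linnik exponent parameter can be taken large enough to absorb the ancillary growth constraints built into Theorem 3.1 of \cite{Eugenia}. Once this is done, the $\ll_{\epsilon} x^{3/5+\epsilon}$ bound on the exceptional set is exactly the one already supplied by Heath-Brown (Lemma \ref{t.HB}) in the proof of Theorem \ref{thm:main2}, so no further prime number input is required.
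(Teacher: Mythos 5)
Your proposal follows the paper's argument essentially verbatim: both reduce to $h\in\mathcal R$, invoke Lemma~\ref{l:Aopt} and the Heath--Brown bound from Theorem~\ref{thm:main2} to control the exceptional set, apply \cite[Theorem 3.1]{Eugenia} to produce the prescribed residue classes for $q$ and $r$, and then use Lemma~\ref{generalLinnik} twice (first to bound $q\ll p^{O(1)}$ via $p_2(a,p)$, then to bound $r\ll (pq)^{O(1)}$ while enforcing $r>pq$), finishing with $p=O(h)$. The remaining bookkeeping concerns you flag are precisely the details the paper also leaves implicit, so there is no substantive divergence from the paper's proof.
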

\begin{proof}
We will use
\cite[Theorem 3.1]{Eugenia}, the full version of
Theorem \ref{t.Eugenia}. 
As Theorem \ref{t.Eugenia} is used in the proof of Theorem \ref{thm:main2}, we get the same
number of possible exceptions $h\le x$.
In terms of the $m$ of
Theorem \ref{t.Eugenia}, we have $l=2m-1$, with $l\le \sqrt{p}$.
We take $h=(p+l+2)/2$. The prime
$q$ indicated in the theorem is
bounded above by $p_2(a,p)$, with
$a$ an appropriate residue class.
The prime $r$ has to exceed $pq$
and be in an appropriate residue class modulo $pq$. By Lemma 
\ref{generalLinnik} we have
$pq\le pp_2(a,p)\ll p^{T_1}$ for 
some constant $T_1$. Thus by
Lemma \ref{generalLinnik} again, $r$ is $\ll p^{T_2}$ for some 
constant $T_2$.
Thus $pqr\ll p^{T_1+T_2}$. The result then follows with 
$T=T_1+T_2$ on noticing that $p=O(h)$.
\end{proof}

\subsection{Prescribed maximum or minimum coefficient}
So far we focused on possible heights of cyclotomic polynomials. Instead one can ask for possible maxima
and minima. In this section we will argue why the following conjecture is reasonable.
\begin{Con}
Each non-zero integer occurs either as the maximum or 
as the minimum coefficient of some cyclotomic polynomial.
\end{Con}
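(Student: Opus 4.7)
The plan is to prove the stronger statement that, under Andrica's conjecture, every positive integer occurs as the maximum coefficient of some cyclotomic polynomial and every negative integer occurs as the minimum coefficient of some cyclotomic polynomial; the conjecture then follows immediately. The first half is exactly Theorem~\ref{thm:Andrica}, so only the second half requires new input. The starting observation is that the two families in Theorem~\ref{t.Bach04} are mirror images of one another, producing both $(p+1)/2$ as a maximum and $-(p+1)/2$ as a minimum for every odd prime $p\ge 3$. This strongly suggests that the whole min/max story should be symmetric, and my aim is to promote this mirror symmetry from the $m=0$ case to the full Moree--Ro\c su setting.

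Concretely, the key step is to establish a dual of Theorem~\ref{t.Eugenia}: for each integer $m\ge 0$ and each prime $p\ge 4m^2+2m+3$, there exists an infinite sequence of prime pairs $(q_j,r_j)$ with $q_j<q_{j+1}$, $pq_j<r_j$, and
\[
A\{pq_jr_j\}=\{-(p+1)/2-m,\ldots,(p-1)/2-m\},
\]
so that $-(p+1)/2-m$ is realized as a minimum coefficient. Granted this dual, the Andrica-based gap argument that drives Theorem~\ref{thm:Andrica} applies verbatim to the quantity $-(p+1)/2-m$: for every sufficiently large $h\ge 2$ one locates a prime $p$ and an integer $m\ge 0$ with $(p+1)/2+m=h$ and $p\ge 4m^2+2m+3$, and such a pair realizes $-h$ as the minimum coefficient of a ternary cyclotomic polynomial. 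The small cases are handled by direct inspection, with $-1$ already witnessed by $\Phi_1(x)=x-1$, and together this settles the conjecture conditionally on Andrica.

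The main obstacle is proving the dual of Theorem~\ref{t.Eugenia}. The mirror symmetry visible in the $m=0$ case makes the dual very plausible, but extracting it requires going into the proof in \cite{Eugenia} and either exhibiting an explicit involution on the auxiliary parameters that swaps the two constructions, or rebuilding the dual family from scratch while tracking signs through the original argument. A conceivable alternative would be to leave the ternary setting and work with cyclotomic polynomials having four or more distinct prime factors, exploiting the extra freedom to realize a prescribed minimum; as the authors remark at the end of the introduction, however, control over the extremal coefficients degrades sharply beyond three prime factors, so the dualization route looks more tractable than attempting to extend the method to richer $n$.
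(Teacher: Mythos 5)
Your overall strategy—reduce the conjecture to Andrica's conjecture, handle positive integers via Theorem~\ref{thm:Andrica}, and handle negative integers via a mirror-image family—is exactly the route the paper takes in \S\,5.2 (where it argues the conjecture is reasonable and proves it conditionally via Lemmas~\ref{l:Aopt+-} and~\ref{l.pi1+-}). However, you flag as the ``main obstacle'' the construction of a dual of Theorem~\ref{t.Eugenia} realizing $A\{pq_jr_j\}=\{-(p+1)/2-m,\ldots,(p-1)/2-m\}$, and you propose to obtain it by dissecting the proof in \cite{Eugenia} or rebuilding the Moree--Ro\c{s}u family from scratch while tracking signs. That work is unnecessary: the duality you want is a black-box consequence of Proposition~\ref{prop:Kaplan} (Kaplan's symmetry). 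The families of Theorem~\ref{t.Eugenia} satisfy $r_j>pq_j$, so for any prime $s>pq_j$ with $s\equiv -r_j\,({\rm mod~}pq_j)$—and there are infinitely many such $s$ by Dirichlet—Proposition~\ref{prop:Kaplan} yields $A\{pq_js\}=-A\{pq_jr_j\}=\{-(p+1)/2-m,\ldots,(p-1)/2-m\}$, which is precisely your dual family, with $s$ still exceeding $pq_j$ so that optimality is preserved. This is exactly how the paper's proof of Lemma~\ref{l:Aopt+-} obtains $-\mathcal R^{\pm}\subseteq \mathcal A^-_{opt}$ alongside $\mathcal R^{\pm}\subseteq \mathcal A^+_{opt}$; combining that with Lemma~\ref{l.pi1+-} under Andrica then gives both halves of your stronger statement simultaneously. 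So the plan is sound, the mirror symmetry you intuit from Theorem~\ref{t.Bach04} is indeed structural, but the tool that makes it precise—Kaplan's $s\equiv -r\,({\rm mod~}pq)$ trick—is already stated in the paper, and invoking it removes the only genuine gap in your argument.
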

\begin{Def} We denote the maximum and minimum 
coefficients of
$\Phi_n$ by $A^+(n)$, respectively $A^-(n)$. 
We put ${\cal A}^+_{t}=\{
A^+(n)
:n\text{~is~ternary}\}$ and define
${\cal A}^-_{t}$ analogously.
We denote by $\mathcal A^{+}_{opt}$
the set of all $A^+(n),$ with $n$ optimal and define 
$\mathcal A^{-}_{opt}$ analogously.
\end{Def}
\begin{rem} {\rm Using the elementary identity 
$\Phi_n(1)=e^{\Lambda(n)}$ (valid for
$n>1$), we infer that}
$$A^-(n)=\begin{cases}
1 & \text{if~}n=p^k \text{~for some prime }p
\text{~and~} k\ge 1;\\
<0 & \text{otherwise}.
\end{cases}
$$
\end{rem}

Since our arguments rest on properties of ternary cyclotomic polynomials, the next
result due to Kaplan makes it plausible that asking 
which maximal coefficients can occur is in essence
the same as asking which possible minimum coefficients can occur.
\begin{Prop}{\rm (Implicit in Kaplan \cite{Kaplan}, explicit in Bachman and Moree \cite{BM})}.
\label{prop:Kaplan}
If $r,s>pq$, then
$$
A\{pqr\}=
\begin{cases}
A\{pqs\} & \text{~if~}s\equiv r\,({\rm mod~}pq);\\
-A\{pqs\} & \text{~if~}s\equiv -r\,({\rm mod~}pq).
\end{cases}
$$
\end{Prop}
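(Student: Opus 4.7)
\medskip

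\noindent\textbf{Proof plan.} My plan is to deduce the proposition from \emph{Kaplan's lemma}, which provides an explicit combinatorial expression for each coefficient $a_{pqr}(n)$ of a ternary cyclotomic polynomial as a signed count $N_+(n)-N_-(n)$ of lattice points in a rectangle whose dimensions depend only on $p$ and $q$. The decisive feature of the formula is that, once $r>pq$, the indicator functions defining $N_+$ and $N_-$ depend on $r$ only through the residue $r^\ast := r \bmod pq$. Since our hypothesis is precisely $r,s>pq$, this feature is available for both $\Phi_{pqr}$ and $\Phi_{pqs}$.

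The case $s\equiv r\pmod{pq}$ is then immediate: we have $r^\ast=s^\ast$, so Kaplan's formula yields $a_{pqr}(n)=a_{pqs}(n)$ for every $n$ in the common range of nonzero coefficients (the ``degree tail'' being controlled by the self-reciprocal symmetry of cyclotomic polynomials). Taking the set of values gives $A\{pqr\}=A\{pqs\}$.

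For the case $s\equiv -r\pmod{pq}$, we have $s^\ast=pq-r^\ast$, and the task is to verify that the involution $r^\ast\mapsto pq-r^\ast$ acts on Kaplan's formula by \emph{swapping} the two lattice counts $N_+$ and $N_-$ after a reindexing $n\mapsto m(n)$ that reflects the underlying rectangle. Granting this, one obtains $a_{pqs}(m(n))=-a_{pqr}(n)$ for every $n$, and passing to value sets yields $A\{pqs\}=-A\{pqr\}$, as required.

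The main obstacle is the combinatorial bookkeeping in the second case. Kaplan's formula is phrased in terms of the modular inverses of $p$ and $q$ modulo one another and of $r$ modulo $pq$, and under $r^\ast\mapsto pq-r^\ast$ these inverses transform in a way that must be tracked explicitly in order to exhibit the correct reflection $n\mapsto m(n)$ and to confirm that the roles of $N_+$ and $N_-$ genuinely interchange. This step is elementary but delicate; once carried out, both halves of the proposition fall out simultaneously.
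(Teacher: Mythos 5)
The paper itself does not prove this proposition; it simply attributes it to Kaplan \cite{Kaplan} and Bachman--Moree \cite{BM}, so there is no in-paper argument to compare against. Your route --- invoking Kaplan's lemma to write each $a_{pqr}(n)$ as a signed lattice-point count $N_+(n)-N_-(n)$ in which the dependence on $r$ enters only through $r\bmod pq$ once $r>pq$ --- is exactly the mechanism underlying the cited references, so your choice of key tool is the right one. The first case ($s\equiv r\pmod{pq}$) is handled convincingly.

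The problem is the second case. You correctly identify the claim that needs to hold --- that the involution $r^{\ast}\mapsto pq-r^{\ast}$ must exchange $N_+$ and $N_-$ after a suitable reindexing $n\mapsto m(n)$ --- but you explicitly defer this verification as ``elementary but delicate'' bookkeeping. That verification \emph{is} the proposition in the second case; without it, you have restated what must be proved rather than proved it. Nothing about Kaplan's formula makes the swap obvious a priori: the two counts are asymmetric (they involve different index ranges tied to $p$ and $q$), and the modular inverses of $p$ and $q$ modulo one another do not simply negate under $r^{\ast}\mapsto pq-r^{\ast}$. One must actually produce the reindexing map $m$, check it is a bijection on the index range $[0,\varphi(pqr)]$ (the palindromic symmetry $a_n(k)=a_n(\varphi(n)-k)$ you gesture at does not by itself give a sign flip), and verify term-by-term that the conditions defining $N_+$ and $N_-$ interchange. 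As written, your argument is a correct outline with a genuine hole at the crux; to close it you should either carry out the Kaplan-formula bookkeeping explicitly, or (cleaner, and closer to what \cite{BM} actually does) pass to the inclusion--exclusion polynomial $Q_{\{p,q,\rho\}}$ with $\rho=r\bmod pq$ and establish directly the relation between $Q_{\{p,q,\rho\}}$ and $Q_{\{p,q,pq-\rho\}}$, from which the negation of the coefficient set falls out.
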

This proposition can be used to prove the following lemma (recall
that $M(p;q)$ is defined in \eqref{eq:mpqr}).
\begin{Lem}
\label{lem:maxmin}
If $A(pqr)=M(p;q)$, then there exist
primes $r_1$ and $r_2$ such that 
$A^+(pqr_1)=M(p;q)$ and $A^-(pqr_2)=-M(p;q)$.
\end{Lem}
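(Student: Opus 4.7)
The plan is to produce both $r_1$ and $r_2$ from the prime $r$ given in the hypothesis, by combining Proposition \ref{prop:Kaplan} with Dirichlet's theorem on primes in arithmetic progressions. The starting observation is that $A(pqr)=M(p;q)$ means $\max_k|a_{pqr}(k)|=M(p;q)$, so the coefficient set $A\{pqr\}$ contains $M(p;q)$ or $-M(p;q)$ (possibly both). These are exactly the values one needs to exhibit as a maximum and a minimum coefficient, respectively.

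First, I would invoke Dirichlet's theorem to produce a prime $r'>pq$ with $r'\equiv r\pmod{pq}$ and another prime $r''>pq$ with $r''\equiv -r\pmod{pq}$; both residues are coprime to $pq$ since $r$ is a prime distinct from $p,q$. Proposition \ref{prop:Kaplan} then yields $A\{pqr'\}=A\{pqr\}$ and $A\{pqr''\}=-A\{pqr\}$, so the two coefficient sets are negatives of one another and their union contains both $M(p;q)$ and $-M(p;q)$.

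Next, I would select $r_1\in\{r',r''\}$ so that $M(p;q)\in A\{pqr_1\}$, and $r_2\in\{r',r''\}$ so that $-M(p;q)\in A\{pqr_2\}$. Since both $pqr'$ and $pqr''$ inherit the height $M(p;q)$ from $pqr$, and $M(p;q)$ is by definition the largest absolute value attained by any coefficient, it follows that $M(p;q)$ is the maximum coefficient of $\Phi_{pqr_1}$ and $-M(p;q)$ the minimum coefficient of $\Phi_{pqr_2}$. This gives $A^+(pqr_1)=M(p;q)$ and $A^-(pqr_2)=-M(p;q)$.

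The main technical obstacle is the requirement $r,s>pq$ in Proposition \ref{prop:Kaplan}: if the originally given $r$ satisfies only $q<r\le pq$, the identity $A\{pqr'\}=A\{pqr\}$ is not immediately provided by the proposition as stated. I would resolve this by appealing to Kaplan's congruence result in its general form, namely that $A\{pqr\}$ depends only on $r\bmod pq$ for every prime $r>q$; alternatively, since $M(p;q)$ is defined as a maximum over all primes $r>q$ and is attained along a full residue class modulo $pq$, one may from the outset assume without loss of generality that the prime $r$ witnessing $A(pqr)=M(p;q)$ already exceeds $pq$, at which point Proposition \ref{prop:Kaplan} applies directly.
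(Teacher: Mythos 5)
Your argument is correct, but it takes a genuinely different route from the paper's. The paper's proof is a one-line citation of Gallot, Moree and Wilms \cite[Proposition 1]{GMW}, which states directly that the set of coefficients that occur in $\Phi_{pqr}$ as $r$ ranges over the primes exceeding $q$ is precisely $[-M(p;q),M(p;q)]\cap\mathbb Z$; the existence of $r_1$ and $r_2$ then drops out immediately. You instead build $r_1$ and $r_2$ out of the given witness $r$ by combining Proposition~\ref{prop:Kaplan} with Dirichlet's theorem, which is a different and somewhat more self-contained argument: it uses only facts the paper itself records, rather than importing the stronger statement from \cite{GMW}. The trade-off is the technical issue you correctly flag: Proposition~\ref{prop:Kaplan} as stated requires $r,s>pq$, whereas the hypothesis only gives you some prime $r>q$. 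Of the two fixes you offer, only the first (cite Kaplan's congruence theorem in its general form, valid for all primes $r>q$) is legitimate; the second (``assume without loss of generality that $r>pq$, since $M(p;q)$ is attained along a full residue class modulo $pq$'') is circular, because the claim that $M(p;q)$ is attained along a full residue class \emph{is} Kaplan's congruence result. With the first fix in place and the observation that $A(pqr')\le M(p;q)$ for every admissible $r'$ (so that a coefficient equal to $\pm M(p;q)$ is automatically extremal), the proof is complete and sound.
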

\begin{proof}
 The integers in $[-M(p;q),M(p;q)]\cap \mathbb Z$ are
precisely those that appear in $\Phi_{pqr}$ as $r$ ranges over the 
primes exceeding $q$; see
Gallot, Moree and
Wilms \cite[Proposition 1]{GMW}.
\end{proof}
In the proof of Theorem \ref{thm:main3} exclusively heights are considered
that are of the form $M(p;q)$. This observation together with  Lemma \ref{lem:maxmin}
then leads to a proof of the following variant of Theorem \ref{thm:main3}.
\begin{Thm}
\label{thm:main3+-}
If Conjecture \ref{Pi.2tuplet.1} holds true, then
${\cal A}^-_{t}\cup {\cal A}^+_{t}$ contains all 
odd integers. Unconditionally both
${\cal A}^-_{t}$ and ${\cal A}^+_{t}$
contain a positive fraction 
of all odd integers.
\end{Thm}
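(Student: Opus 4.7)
The plan is to combine Lemma \ref{lem:maxmin} with the arguments already used to prove Theorem \ref{thm:main3}. The crucial observation is that Lemma \ref{lem:maxmin} converts any equality $A(pqr) = M(p;q)$ into the two-sided statement $A^+(pqr_1) = M(p;q)$, $A^-(pqr_2) = -M(p;q)$ for suitable primes $r_1, r_2 > q$, and every height produced during the proof of Theorem \ref{thm:main3} is automatically of this form. Verifying that the relevant height equals $M(p;q)$ is therefore the only genuine piece of checking to do; both halves of the theorem then follow in parallel.

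For the conditional half, I would take an odd $h>1$ and apply Conjecture \ref{Pi.2tuplet.1} to obtain a prime $p \geq 2h - 1$ with $q := 1 + (h-1)p$ also prime. The formula of Gallot--Moree--Wilms used inside Lemma \ref{withWilms} gives $M(p;q) = \min\{(q-1)/p + 1,\,(p+1)/2\} = \min\{h,\,(p+1)/2\}$; the bound $p \geq 2h - 1$ forces $(p+1)/2 \geq h$, hence $M(p;q) = h$. Lemma \ref{withWilms} already produces a prime $r > q$ with $A(pqr) = h$, so Lemma \ref{lem:maxmin} delivers primes $r_1, r_2 > q$ with $A^+(pqr_1) = h$ and $A^-(pqr_2) = -h$, placing $\pm h$ in ${\cal A}^+_{t} \cup {\cal A}^-_{t}$. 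The value $h=1$ is easily covered by exhibiting any ternary $n$ with $A^+(n)=1$, so every odd integer is captured.

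For the unconditional half, I would reuse the sieve and Bombieri--Friedlander--Iwaniec output assembled in Section \ref{s:momentstwo}, which (in the sharper form recorded in Remark \ref{rem:20m}) yields a positive proportion of integers $m$ admitting a prime $p$ with $4m < p < 32m$ such that $q := 1 + 2mp$ is prime. Setting $h := 2m + 1$, one has $q = 1 + (h-1)p$ and $p > 4m \geq 2h - 2$, hence $p \geq 2h - 1$; the same minimum computation again gives $M(p;q) = h$, and Lemma \ref{lem:maxmin} places $+h$ in ${\cal A}^+_{t}$ and $-h$ in ${\cal A}^-_{t}$ simultaneously. Since the set of eligible $m$ has positive lower density in $\mathbb N$, the corresponding $h = 2m+1$ form a positive-density subset of the odd natural numbers inside each of ${\cal A}^+_{t}$ and ${\cal A}^-_{t}$. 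The only real obstacle along the way is the bookkeeping verification $A(pqr)=M(p;q)$ at every instance, since Lemma \ref{lem:maxmin} has precisely that equality as its structural hypothesis; the upper bound $p<32m$ in the definition of $G$ keeps $q$ polynomially controlled in $h$ so that this check goes through uniformly, and no substantively new analytic input beyond what already underlies Theorem \ref{thm:main3} and Lemma \ref{lem:maxmin} is required.
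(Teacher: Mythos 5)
Your proposal is correct and follows exactly the paper's route: observe that every height constructed in the proof of Theorem \ref{thm:main3} is of the form $M(p;q)$, then invoke Lemma \ref{lem:maxmin} to split each such height into a realized maximum and a realized minimum; the paper gives precisely this one-sentence argument just before the theorem statement. One small remark: the constraint $p<32m$ is not what makes the verification $A(pqr)=M(p;q)$ work (Lemma \ref{withWilms} gives that equality for \emph{any} prime $p\ge 2h-1$ with $q=1+(h-1)p$ prime, with no upper bound on $p$ needed); in the paper that extra constraint is there to control the size of $pqr$ for Theorem \ref{thm:bound}, so the sentence about it ``keeping $q$ polynomially controlled so that the check goes through uniformly'' is a harmless misattribution rather than a load-bearing step. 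Also, for completeness you should note that the same flat ternary $n$ with $A^+(n)=1$ also has $A^-(n)=-1$, so both $\pm 1$ are covered.
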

In our proof of Theorem \ref{thm:main1} we actually show that
$\mathcal R\subseteq \mathcal A_{opt}^{+}$ (recall that $\mathcal R$ is defined in \eqref{R}).
The optimal ternary
cyclotomic polynomials $\Phi_{pqr}$ used come from Theorem \ref{t.Eugenia}
and satisfy $r>pq$. This allows one then to invoke Proposition \ref{prop:Kaplan}
and conclude that $-\mathcal R\subseteq \mathcal A_{opt}^{-}$.

The following result is analogous to Theorem \ref{thm:main2}.
The proof of that result (given in \S\,\ref{sec:gaps})
rests on bounding above the integers $\le x$ that
are not in $\mathcal R$. Likewise the proof of Theorem 
\ref{thm:main2+-} rests on bounding above the integers
in $[-x,x]$ that are not in $-\mathcal R\cup \mathcal R$.
\begin{Thm}
\label
{thm:main2+-}
The set ${\cal A}^-_{opt}\cup {\cal A}^+_{opt}$ contains almost all 
integers.
Specifically, for any fixed $\epsilon>0$,
the number of integers with
absolute value $\le x$ that do not occur in 
${\cal A}^-_{opt}\cup {\cal A}^+_{opt}$
is $\ll_{\epsilon} x^{3/5+\epsilon}.$ Under the Lindel\"of Hypothesis this number
is $\ll_{\epsilon} x^{1/2+\epsilon}.$
\end{Thm}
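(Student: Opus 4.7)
The plan is to follow the same two-step architecture used for Theorem \ref{thm:main2}: first identify a large and easily described subset of $\mathcal A^{-}_{opt}\cup\mathcal A^{+}_{opt}$, and then bound how many integers in $[-x,x]$ can fail to lie in that subset by invoking the prime-gap estimates of Heath-Brown (Lemma \ref{t.HB}) and (conditionally) the Lindel\"of-based bound in Lemma \ref{lem:rtyuhb23}.

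For the first step, I would record the containment $\mathcal R\subseteq \mathcal A^{+}_{opt}$ that is already observed in the paragraph preceding the theorem: the optimal ternary $\Phi_{pqr}$ supplied by Theorem \ref{t.Eugenia} (and Theorem \ref{t.Bach04} for $m=0$) witness that every element of $\mathcal R$ arises as $A^{+}(pqr)$, not just as $A(pqr)$. Crucially, these constructions all satisfy $r>pq$, so Proposition \ref{prop:Kaplan} applies and yields a prime $s\equiv -r\,({\rm mod~}pq)$ with $s>pq$ such that $A\{pqs\}=-A\{pqr\}$. Then $A^{-}(pqs)=-A^{+}(pqr)$, which shows $-\mathcal R\subseteq \mathcal A^{-}_{opt}$. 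Combining, we get
\[
(-\mathcal R)\cup \mathcal R \,\subseteq\, \mathcal A^{-}_{opt}\cup \mathcal A^{+}_{opt}.
\]

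For the second step, let $N^{\pm}(x)$ denote the number of integers $k$ with $|k|\le x$ that do not lie in $\mathcal A^{-}_{opt}\cup \mathcal A^{+}_{opt}$. By the containment above, any such $k$ with $k\neq 0$ satisfies $|k|\in \mathbb N_{\lfloor x\rfloor}\setminus\mathcal R$, hence
\[
N^{\pm}(x)\le 2\bigl(\#(\mathbb N_{\lfloor x\rfloor}\setminus\mathcal R)\bigr)+O(1).
\]
By Lemma \ref{l.pi1}(b) the right-hand side is bounded above by $E(2x)+O(1)$, with $E$ defined in Lemma \ref{lem:Ex} (up to harmless $O(1)$ adjustments for the special excluded values $1,5,63$). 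Applying Lemma \ref{t.HB} unconditionally gives $N^{\pm}(x)\ll_{\epsilon} x^{3/5+\epsilon}$, and applying Lemma \ref{lem:rtyuhb23} under the Lindel\"of Hypothesis gives $N^{\pm}(x)\ll_{\epsilon} x^{1/2+\epsilon}$, as required.

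There is essentially no new analytic obstacle here, since the heavy lifting has been done in \S\,\ref{sec:gaps}. The only point requiring some care is the bookkeeping that converts the one-sided bound on $\mathbb N_h\setminus \mathcal R$ into the symmetric bound on $[-x,x]\setminus((-\mathcal R)\cup\mathcal R)$; the factor of $2$ above is the only cost, and it is absorbed into the implied constant. The conceptual input unique to this theorem is the use of Proposition \ref{prop:Kaplan}, which transfers every optimal positive height produced by Theorem \ref{t.Eugenia} into an optimal negative minimum via the $r\mapsto -r\,({\rm mod~}pq)$ symmetry, and this is precisely what forces us to state the result for the union $\mathcal A^{-}_{opt}\cup \mathcal A^{+}_{opt}$ rather than for either set individually.
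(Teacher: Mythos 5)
Your proposal is correct and follows exactly the route the paper indicates: establish $\mathcal R\cup(-\mathcal R)\subseteq \mathcal A^{+}_{opt}\cup\mathcal A^{-}_{opt}$ via Theorem \ref{t.Eugenia} and Proposition \ref{prop:Kaplan}, then bound the number of integers in $[-x,x]$ outside $\mathcal R\cup(-\mathcal R)$ by the symmetric doubling of the estimate from Lemmas \ref{lem:Ex}, \ref{t.HB} and \ref{lem:rtyuhb23}. The bookkeeping and the role of the $r\mapsto -r\pmod{pq}$ symmetry match the paper's own (brief) proof sketch, so there is nothing genuinely new or problematic here.
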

\noindent Finally, we will derive a variant
of Theorem \ref{thm:main1}, namely Lemma 
\ref{l:Aopt+-}.

We put
\begin{eqnarray*}
{\mathcal R}^{\pm }&=&\Big
\{\frac{p-1}{2}- m: p{\rm ~is~a~prime}, \,m\ge 0,\, 4m^2+2m+3\le p\Big\}\nonumber\\
 &&\cup \,\,\Big
\{\frac{p-1}{2}+m: p{\rm ~is~a~prime}, \,m\ge 0,\, 4m^2+2m+3\le p\Big\}.
\end{eqnarray*}
We saw that $\mathcal R\subseteq \mathcal A_{opt}^{+}$ and
$-\mathcal R\subseteq \mathcal A_{opt}^{-}$. However, more 
is true.
\begin{Lem}
\label{l:Aopt+-}
We have $\mathcal R^{\pm}\subseteq \mathcal A^+_{opt}$ 
and $-\mathcal R^{\pm}\subseteq \mathcal A^-_{opt}$.
\end{Lem}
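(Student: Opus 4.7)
The plan is to leverage two ingredients already established in the paper: Theorem~\ref{t.Eugenia}, which for every admissible pair $(p,m)$ (that is, $p$ prime with $4m^2+2m+3\le p$) supplies infinite families of optimal ternary polynomials $\Phi_{pq_jr_j}$ with $A\{pq_jr_j\}=[m-(p-1)/2,\,m+(p+1)/2]\cap\mathbb Z$, together with Proposition~\ref{prop:Kaplan}, which lets us negate any such coefficient set by replacing $r_j$ with any prime $s>pq_j$ satisfying $s\equiv -r_j\pmod{pq_j}$. The existence of such $s$ is guaranteed by Dirichlet's theorem, since $\gcd(r_j,pq_j)=1$. Combining these two moves, a single admissible pair $(p,m)$ produces optimal ternary polynomials whose extremal coefficients realize the four values $\pm(m+(p+1)/2)$ and $\pm((p-1)/2-m)$.

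Next I would partition $\mathcal R^{\pm}$ according to its two shapes. An element $h=(p-1)/2-m$ is produced as the maximum coefficient of the negated Moree--Ro\c su family associated to $(p,m)$, and thus lies in $\mathcal A^{+}_{opt}$. For an element $h=(p-1)/2+m$ with $m\ge 1$, I rewrite $h=(p+1)/2+(m-1)$; since $4m^2+2m+3\le p$ implies $4(m-1)^2+2(m-1)+3=4m^2-6m+5\le p$, this element already lies in $\mathcal R\subseteq \mathcal A^{+}_{opt}$ by Lemma~\ref{l:Aopt}. The boundary case $m=0$, where the two shapes coincide at $(p-1)/2$, is covered either by Bachman's second family in Theorem~\ref{t.Bach04} or, equivalently, by the $m=0$ instance of the negated construction above.

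The inclusion $-\mathcal R^{\pm}\subseteq \mathcal A^{-}_{opt}$ is obtained symmetrically. An element of the form $-((p-1)/2-m)=m-(p-1)/2$ is exactly the minimum coefficient of the unnegated family from Theorem~\ref{t.Eugenia} applied to $(p,m)$; and for $m\ge 1$ an element $-((p-1)/2+m)$ rewrites as $-((p+1)/2+(m-1))$, which is the minimum coefficient of the negated family coming from the pair $(p,m-1)$. The case $m=0$ is covered by the minimum of the original Bachman/Moree--Ro\c su polynomial.

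I do not expect genuine obstacles: the four target values $\pm((p-1)/2\pm m)$ distribute naturally over the two polynomials (negated and unnegated) attached to the pairs $(p,m)$ and $(p,m-1)$. The only mild technicality is bookkeeping, namely tracking which polynomial and which shift of $m$ realizes each target, and confirming at each shift $m\mapsto m-1$ that the admissibility inequality $4m^2+2m+3\le p$ still delivers the weaker $4(m-1)^2+2(m-1)+3\le p$, which is immediate.
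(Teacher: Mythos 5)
Your proposal is correct and uses exactly the same ingredients as the paper's (very terse) proof: Theorem~\ref{t.Eugenia} for $m\ge 1$, Theorem~\ref{t.Bach04} for $m=0$, and Proposition~\ref{prop:Kaplan} for negation. You have merely spelled out the bookkeeping that the paper leaves implicit, in particular which of the four extremal values $\pm((p-1)/2\pm m)$ is realized by which of the (original or negated, $(p,m)$ or $(p,m-1)$) families, and the check that admissibility of $(p,m)$ entails admissibility of $(p,m-1)$. One small inaccuracy: you attribute the inclusion $\mathcal R\subseteq \mathcal A^{+}_{opt}$ to Lemma~\ref{l:Aopt}, which only asserts $\mathcal R\subseteq \mathcal A_{opt}$; the sharper version (that elements of $\mathcal R$ occur as \emph{maxima}) is stated in the paragraph preceding Lemma~\ref{l:Aopt+-} and follows because $(p+1)/2+m$ is the top of the Moree--Ro\c su coefficient interval, a fact you do in effect rely on.
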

\begin{proof}
For the elements of $\mathcal R^{\pm}$ with $m=0$ this follows from Theorem \ref{t.Bach04}, for 
those with $m\ge 1$ it
follows from Theorem \ref{t.Eugenia} in combination with Proposition \ref{prop:Kaplan}.
\end{proof}

Taking $p=3,11,127$ and $m=0$ we see that $\{1,5,63\}$ are in 
${\cal R^{\pm}}$. This in
combination with 
Conjecture \ref{c.1.5.63} 
and Lemma \ref{l:Aopt+-} leads to the following conjecture.
\begin{Con}
\label{optR}
We have ${\cal R^{\pm}}={\mathbb N}$, ${\cal A}^+_{opt}={\mathbb N}$ and ${\cal A}^-_{opt}=-{\mathbb N}$.
\end{Con}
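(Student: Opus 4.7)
The plan is to reduce all three assertions to the single claim $\mathcal R^{\pm} = \mathbb N$, and then to prove that claim in close analogy with Theorem \ref{thm:main1}, exploiting the two-sided structure of $\mathcal R^{\pm}$ to replace the stronger Conjecture \ref{Pi.Andr} by the weaker Andrica's conjecture. First I would observe that $\mathcal A^+_{opt} \subseteq \mathbb N$ and $\mathcal A^-_{opt} \subseteq -\mathbb N$ hold unconditionally: $\Phi_n$ is monic, so $A^+(n) \ge 1$ for every $n \ge 2$, and for optimal ternary $n = pqr$ the set $A\{pqr\}$ is an interval of $p + 1 \ge 4$ consecutive integers containing the leading coefficient $1$, which forces $A^-(pqr) \le -1$. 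With these trivial upper inclusions in hand, Lemma \ref{l:Aopt+-} reduces the whole conjecture to proving $\mathcal R^{\pm} = \mathbb N$.

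For the core claim, the point is that $\mathcal R^{\pm}$ is symmetric around $(p-1)/2$: for each prime $p$ with $4m^2 + 2m + 3 \le p$, the set $\mathcal R^{\pm}$ contains the full symmetric interval $[(p-1)/2 - m,\,(p-1)/2 + m] \cap \mathbb N$. Choosing $m_p := \lfloor (\sqrt{p}-1)/2 \rfloor$ as the largest admissible value (exactly as in the proof of Lemma \ref{l.pi1}(a)), the contribution from $p$ is an interval of half-width $\sim \sqrt{p}/2$ centred at $(p-1)/2$. Requiring the contributions from two consecutive primes $p_n$ and $p_{n+1}$ to overlap without a gap translates to an inequality of the shape $p_{n+1} - p_n \le \sqrt{p_n} + \sqrt{p_{n+1}} + O(1)$, which is precisely Andrica's conjecture (Conjecture \ref{Andr}). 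The three small values $1, 5, 63$ that obstruct $\mathcal R = \mathbb N$ are now absorbed directly by taking $p = 3, 11, 127$ with $m = 0$, as in the observation preceding the conjecture, and the finitely many primes below the range where the symmetric interval argument kicks in robustly would be handled by a direct computational check, in the spirit of Lemma \ref{l:upto2h}.

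The principal obstacle is Andrica's conjecture itself, which as emphasised in the Introduction lies well beyond current technology; even assuming the Riemann Hypothesis one only gets Cram\'er's bound $d_n \ll \sqrt{p_n}\log p_n$. Once Andrica is granted, the coverage argument above yields $\mathcal R^{\pm} = \mathbb N$, from which $\mathcal A^+_{opt} = \mathbb N$ is immediate by Lemma \ref{l:Aopt+-}, and the statement $\mathcal A^-_{opt} = -\mathbb N$ follows from the same lemma together with the sign-flipping symmetry provided by Proposition \ref{prop:Kaplan}. A quantitative conditional version of the conjecture in which only $O_\epsilon(x^{3/5+\epsilon})$ (or, under the Lindel\"of Hypothesis, $O_\epsilon(x^{1/2+\epsilon})$) integers up to $x$ are missing would follow automatically from the present approach combined with the Heath-Brown and Yu prime-gap bounds of \S\ref{sec:gaps}; this is essentially the content of Theorem \ref{thm:main2+-}, so Conjecture \ref{optR} is the natural unconditional strengthening.
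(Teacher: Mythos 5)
The statement you were asked to prove is a \emph{Conjecture}, not a Theorem; the paper offers no proof of it, only motivation. The paper's own motivation is via Conjecture~\ref{c.1.5.63} (that $\mathcal R = \mathbb N \setminus \{1,5,63\}$, itself resting on the still-stronger prime-gap Conjecture~\ref{Pi.Andr}) together with the direct check, stated just before the conjecture, that $1,5,63 \in \mathcal R^{\pm}$ via $(p,m) = (3,0), (11,0), (127,0)$. What you have actually sketched — and you are upfront about this in your final paragraph — is a proof that Andrica's conjecture implies Conjecture~\ref{optR}. That conditional implication is genuinely in the paper: it is Lemma~\ref{l.pi1+-} together with Theorem~\ref{thm:Andrica}, and your coverage argument (symmetric intervals of half-width $\sim \sqrt{p}/2$ centred at $(p-1)/2$, overlapping exactly when $d_n < \sqrt{p_n} + \sqrt{p_{n+1}}$) is precisely the paper's. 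So the arithmetic is sound; your trivial reverse inclusions $\mathcal A^+_{opt}\subseteq\mathbb N$ and $\mathcal A^-_{opt}\subseteq -\mathbb N$ are correct; and your observation that the two-sided structure of $\mathcal R^{\pm}$ lets Andrica suffice (whereas the one-sided $\mathcal R$ forces the stronger Conjecture~\ref{Pi.Andr}) is the key point the paper makes implicitly by placing Lemma~\ref{l.pi1+-} next to Lemma~\ref{l.pi1}.

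To name the gap concretely: the target statement is unconditional, Andrica's conjecture is wide open, and nothing in your argument (or in the paper) removes that dependence. What you have is a proof of ``Andrica $\Rightarrow$ Conjecture~\ref{optR}'', which the paper essentially records as Theorem~\ref{thm:Andrica} plus Lemma~\ref{l:Aopt+-} and Proposition~\ref{prop:Kaplan}, while deliberately leaving the unconditional version as an open conjecture. Since there is no unconditional proof in the paper to compare against, the appropriate assessment is that you have correctly reconstructed the paper's \emph{conditional} result, not supplied a proof of the conjecture itself.
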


\subsection{Connection with Andrica's conjecture}
The aim of this subsection is to prove Theorem \ref{thm:Andrica}.
Our proof is a consequence of the following lemma that is
analogous to Lemma \ref{l.pi1}.
\begin{Lem} 
\label{l.pi1+-} Let $n\ge 5$ and 
$I_n:=\textstyle[\frac{p_n+1}{2},
\frac{p_{n+1}-1}{2}]$.\\
{\rm a)} If $p_{n+1}-p_n< \sqrt{p_n}+\sqrt{p_{n+1}},$ then
 $I_n\cap \mathbb N\subseteq \mathcal R^{\pm}.$\\
{\rm b)} If $p_{n+1}-p_n<\sqrt{p_n}+\sqrt{p_{n+1}}$ holds for $11\le p_n<2h$ with $h$ an
integer, then we have
$\mathbb N_h\subseteq \mathcal R^{\pm}.$
\end{Lem}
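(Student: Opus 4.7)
The plan is to adapt the argument of Lemma~\ref{l.pi1} to the two-sided family $\mathcal R^{\pm}$. The key observation is that for every odd prime $p$, $\mathcal R^{\pm}$ contains the full symmetric block of integers
$$
\{(p-1)/2-m_p,\,(p-1)/2-m_p+1,\,\ldots,\,(p-1)/2+m_p\},
$$
where $m_p$ denotes the largest non-negative integer satisfying $4m_p^2+2m_p+3\le p$. For part~(a) I would apply this to $p=p_n$ and $p=p_{n+1}$ simultaneously and show that the two resulting blocks together contain every integer in $[(p_n-1)/2,\,(p_{n+1}-1)/2]$, which includes $I_n\cap\mathbb N$. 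The combinatorial criterion for the two blocks to meet or overlap is
$$
m_{p_n}+m_{p_{n+1}}+1\;\ge\;\tfrac12(p_{n+1}-p_n).
$$

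To extract this inequality from the Andrica hypothesis, I would import the estimate $m_p \ge (\sqrt p-3)/2$ proved inside Lemma~\ref{l.pi1}. Summing it at $p_n$ and $p_{n+1}$ gives
$$
2(m_{p_n}+m_{p_{n+1}})+2\;\ge\;\sqrt{p_n}+\sqrt{p_{n+1}}-4,
$$
tantalisingly close to but not immediately at the Andrica bound $p_{n+1}-p_n<\sqrt{p_n}+\sqrt{p_{n+1}}$. I expect closing this residual discrepancy to be the main obstacle. The finishing touch is to exploit that $p_{n+1}-p_n$ is an even integer, and that $m_p\ge (\sqrt p-3)/2$ is only close to sharp when $p$ sits just below a transition $p=4m^2+10m+9$ where $m_p$ jumps. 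The crucial structural point is that Andrica forbids $p_n$ and $p_{n+1}$ from simultaneously lying near two consecutive such transitions: that configuration would force a prime gap of order $8\sqrt{p_n}$, far above the Andrica ceiling of order $2\sqrt{p_n}+1$.

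Part~(b) is then a structural consequence of~(a). Decompose $\mathbb N_h$ into the centres $\{(p-1)/2:p\text{ prime},\,p\le 2h+1\}$, each in $\mathcal R^{\pm}$ via $m=0$ since $3\le p$, together with the blocks $I_n\cap\mathbb N$ for consecutive prime pairs with $p_n<2h$. Part~(a) handles every block with $p_n\ge 11$ by hypothesis; the remaining integers $\{2,3,4,5\}$, coming from the blocks with $p_n\in\{3,5,7\}$, are handled by inspection, e.g.\ $4=(11-1)/2-1\in \mathcal R^{\pm}$ as $4\cdot1^2+2+3=9\le 11$.
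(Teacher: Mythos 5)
Your overall strategy is the right one and matches the paper's: place around each prime $p$ the symmetric block of integers in $\mathcal R^{\pm}$ coming from $m\le m_p$, estimate $m_p\ge(\sqrt p-3)/2$ as in Lemma~\ref{l.pi1}, and check that consecutive blocks at $p_n$ and $p_{n+1}$ meet. However, the ``residual discrepancy'' you flag and the structural argument you propose to close it are both symptoms of working with the printed (mis-typed) definition of $\mathcal R^{\pm}$. The second constituent set in the displayed definition of $\mathcal R^{\pm}$ should read $\bigl\{\tfrac{p+1}{2}+m\bigr\}$, not $\bigl\{\tfrac{p-1}{2}+m\bigr\}$: the proof of Lemma~\ref{l:Aopt+-} obtains these elements as the maxima $\tfrac{p+1}{2}+m$ of the polynomials in Theorem~\ref{t.Eugenia}, and the sentence preceding Lemma~\ref{l:Aopt+-} asserts $\mathcal R\subseteq\mathcal R^{\pm}$, which fails for the literal definition but holds for the corrected one. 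With the correct definition the block attached to $p$ is
\[
\Bigl[\tfrac{p-1}{2}-m_p,\;\tfrac{p+1}{2}+m_p\Bigr]\cap\mathbb N,
\]
one integer longer than the block you used, so the meeting condition relaxes from your $m_{p_n}+m_{p_{n+1}}+1\ge d_n/2$ to
\[
m_{p_n}+m_{p_{n+1}}+2\ \ge\ \tfrac{d_n}{2},
\]
and this is exactly what the paper verifies: since $d_n/2$ is an integer it suffices that $d_n/2<m_{p_n}+m_{p_{n+1}}+3$, and Andrica together with $m_p\ge(\sqrt p-3)/2$ gives $d_n/2<(\sqrt{p_n}+\sqrt{p_{n+1}})/2\le m_{p_n}+m_{p_{n+1}}+3$. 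No further idea is needed.

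By contrast, your proposed ``finishing touch'' does not hold up. The parity of $d_n$ is already used (it converts a strict inequality between integers into a gain of one unit) and buys you nothing extra beyond what is accounted for above; and the assertion that both $p_n$ and $p_{n+1}$ sitting just below a transition $p=4m^2+10m+9$ would force a gap of order $8\sqrt{p_n}$ is unjustified --- the two nearest such thresholds around $p_n$ are only about $4\sqrt{p_n}$ apart, and nothing prevents $p_n$ from being just below one of them and $p_{n+1}$ just below the next. So with the literal definition your argument has a genuine gap; with the intended definition the whole ``obstacle'' disappears. Your part~(b) is fine in spirit, but note the paper does not need the centres $(p-1)/2$ as a separate ingredient: the intervals $I_n$ for $n\ge 2$ already exhaust $\{2,3,\ldots\}$, and one only has to check by hand that $1,2,3,4,5\in\mathcal R^{\pm}$.
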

\begin{proof}
We let the integer $m_n$ be as in the proof 
of Lemma \ref{l.pi1}
and recall that $m_n\ge (\sqrt{p_{n}}-3)/2$.
Part a) 
follows if we can show that the final number $(p_n+1)/2+m_n$ is at least
$(p_{n+1}-1)/2-m_{n+1}-1$. 
Since both numbers are integers it suffices to require that 
$$\frac{p_n+1}{2}+m_n>\frac{p_{n+1}-1}{2}-m_{n+1}-2.$$ 
This is equivalent with
$d_n/2< m_{n}+m_{n+1}+3$.
Now our assumption on $d_n$ implies that
$$d_n/2< (\sqrt{p_{n}}-3)/2+
(\sqrt{p_{n+1}}-3)/2+3\le m_{n}+m_{n+1}+3,$$
as wanted.\\
b) This is a consequence of part a) and the observation that
$1,2,3,4$ and $5$ are in $\mathcal R^{\pm}$.
\end{proof}

\begin{proof}[Proof of Theorem 
\ref{thm:Andrica}]
A consequence of Lemma \ref{l.pi1+-} part b)
and the observation that we also have $p'-p<\sqrt{p}+\sqrt{p'}$ for
$p\le 11$.
\end{proof}

\newpage
\section{Ternary cyclotomic polynomials of small height}
\centerline{{\bf Table 1:} {\tt Ternary examples with prescribed height}}
\medskip
\medskip
\begin{center}
\begin{tabular}{|c|c|c|c|c|c|c|c|c|}
\hline
height & $p$ & $q$ & $r$ & $k$ & {\rm sign}& diff.&$\frac{k}{\phi(pqr)}$&
$\frac{\log(pqr)}{\log h}$\\
\hline
{\bf 1} & 3 & 7 & 11 & 0 & +&2&0&\\
\hline
{\bf 2} & 3 & 5 & 7 & 7 & --&{\bf 3}&0.146&6.714\\
\hline
{\bf 3} & 5 & 7  & 11 & 119 & --&{\bf 5}&0.496&5.418\\
\hline
{\bf 4} & 11 & 13 & 17 & 677 & -- & 7&0.353&5.623\\
\hline
{\bf 5} & 11 & 13 & 19 & 1008 &--&9&0.467&4.913\\
\hline
{\bf 6} & 13 &23  & 29 & 2499 &--&10&0.338&5.060\\
\hline
{\bf 7} & 17 &19  & 53 &6013 &+&14&0.402&5.009\\
\hline
{\bf 8} & 17 & 31 & 37 & 5596 &--&14&0.324&4.750\\
\hline
{\bf 9} &17  & 47 & 53 & 14538  &--&{\bf 17}&0.379&4.848\\
\hline
{\bf 10} & 17 & 29 & 41 & 4801 &--&{\bf 17}&0.267&4.305\\
\hline
{\bf 11} & 23 & 37 & 61 &20375 &--&16&0.428&4.527\\
\hline
{\bf 12} & 23 & 37 & 41 & 14471 &+&21&0.456&4.209\\
\hline
{\bf 13} & 31 & 59 & 73 & 58333 &--&25&0.465&4.601\\
\hline
{\bf 14} & 37 & 53 & 61 & 52286 &+&27&0.465&4.430\\
\hline
{\bf 15} & 37 & 47 & 61 &45939 &--&29&0.462&4.273\\
\hline
{\bf 16} & 41 & 79 & 97 & 133844  &--&30&0.446&4.565\\
\hline
{\bf 17} & 41 & 43 & 53 & 38240 &+&33&0.437&4.039\\
\hline
{\bf 18} & 61 & 97 & 103 & 178013 &--&34&0.302&4.608\\
\hline
{\bf 19} & 43 & 83 & 89 & 101051 &--&33&0.333&4.302\\
\hline
{\bf 20} &47 & 83 & 131 & 235842 &+&37&0.481&4.387\\
\hline
{\bf 21} & 47 & 101 &109  & 217278 &--&41&0.437&4.321\\
\hline
{\bf 22} & 53 & 83 & 89 & 165453 &--&44&0.441&4.166\\
\hline
{\bf 23} & 43 & 71 & 109 & 108355  &+&{\bf 43}&0.341&4.055\\
\hline
{\bf 24} & 53 & 103 & 109 & 189160 &--&42&0.330&4.183\\
\hline
{\bf 25} & 61 & 79 & 97 & 224640 &--&47&$0.500$&4.055\\
\hline
{\bf 26} & 41 & 71 & 97 & 96529 &--&{\bf 41}&0.359&3.852\\
\hline
{\bf 27} & 61 & 109 & 113 & 332589  &--&54&0.458&4.105\\
\hline
{\bf 28} & 53 & 89 & 131 & 186685 &--&{\bf 53}&0.314&4.001\\
\hline
{\bf 29} & 83 & 109 & 139 & 552035 &--&58&0.452&4.170\\
\hline
{\bf 30} & 67 & 131 & 137 & 389139 &--&52&0.333&4.116\\
\hline
{\bf 31} & 83 & 107 & 113 & 444435  &+&61&0.456&4.024\\
\hline
{\bf 32} & 79 & 149 & 163 &881529  &+&63&0.471&4.174\\
\hline
{\bf 33} & 73 & 103 & 113 &389314 &+&61&0.473&3.904\\
\hline
{\bf 34} & 71 & 109 & 113 & 409320 &--&60&0.483&3.879\\
\hline
{\bf 35} & 83 & 103 & 139 &544198  &--&69&0.471&3.934\\
\hline
{\bf 36} & 127 & 149 & 151 & 1246462 &--&72&0.445&4.148\\
\hline
{\bf 37} & 71 & 101 & 239 & 671716&+&67&0.403&3.975\\
\hline
{\bf 38} & 127 & 137 & 409 &3355658  &--&75&0.479&4.337\\
\hline
{\bf 39} & 83 & 149 & 157 & 941094 &+ &76&0.497&3.952\\
\hline
{\bf 40} & 79 & 233 & 239 & 1624556 & +&{\bf 79}&0.377&4.146\\
\hline
\end{tabular}
\end{center}
\vfil\eject
Table 1 gives the minimum ternary 
integer $n=pqr$ with $p<q<r$ such that $A(n)=m$ 
for the numbers $m=1,\ldots,40.$
The integer $k$ has
the property that $a_{pqr}(k)=\pm m,$ with the sign coming from the sixth column. The seventh column records the difference 
between the largest and smallest coefficient and is in
bold if this is optimal, that is, if the difference equals $p$ 
(compare Definition \ref{def:optimal}).
The second-to-last column gives the relative position of $k$ in $\Phi_{pqr}$. The final 
column gives, for $h>1$, the exponent $e$ such that $pqr=h^e$.

The heights $h$ in Table 1 satisfy
$h\le 2p/3$ with equality
only in case $h=2$. This is consistent with the generalized Sister Beiter conjecture due to Gallot and Moree \cite{GM}. \\

\noindent {\bf Acknowledgement.} 
The authors thank Danilo Bazzanella, Adrian Dudek,
Tom\'as Oliveira e Silva, Alberto Perelli and Tim Trudgian 
for helpful
email correspondence. 
Olivier Ramar\'e kindly provided us with a
high accuracy evaluation of $C_1.$
We thank the referee for excellent remarks that
helped to improve the exposition of this paper and
questions that led to
the addition of Section \ref{sec:referee}. 

In case an integer $h$ is not in $\mathcal R$ (defined in \eqref{R}), 
still the work of Moree and Ro\c su \cite{Eugenia} offers
some hope to show that $h$ occurs as a height (as we saw
in case $h=63$). To make
this more precise involves understanding the distribution of
inverses modulo primes. We thank Cristian Cobeli for sharing
some observations and numerical experiments  on this.

The first author is a novice in number theory and is very
grateful to Pieter Moree for introducing him to the field. The authors, except the fourth, are or
were supported by
the Max Planck Institute for Mathematics and 
    thankful for this.
The fourth author is supported
by  the National Natural Science Foundation of China (Grant No. 11801303),
project ZR2019QA016 supported by the Shandong Provincial Natural Science 
Foundation and
a project  funded by the China Postdoctoral Science Foundation (Grant 
No. 2018M640617).

\medskip\noindent {\footnotesize  Institute of Mathematics, \\
Ukrainian National Academy of Sciences,\\
3 Tereshchenkivs'ka Str., 
01024 Kyiv, Ukraine.\\   
e-mail: {\tt kosyak02@gmail.com}}\\

\medskip\noindent {\footnotesize Max-Planck-Institut f\"ur Mathematik,\\
Vivatsgasse 7, D-53111 Bonn, Germany.\\
e-mail: {\tt moree@mpim-bonn.mpg.de}}\\

\medskip\noindent
{\footnotesize School of Mathematics and Statistics,\\ 
University of Glasgow,\\ 
University Place, Glasgow, G12 8SQ, United Kingdom.\\
E-mail: {\tt efthymios.sofos@glasgow.ac.uk}}\\

\medskip\noindent
{\footnotesize School of Mathematical Sciences,\\ 
Qufu Normal University,\\ 
Qufu 273165, P. R. China.\\
E-mail: {\tt zhangbin100902025@163.com}}
\vskip 5mm
\end{document}